\pgfplotsset{compat=newest}
\newtheorem{theorem}{Theorem}[section]
\newtheorem{lemma}[theorem]{Lemma}
\newtheorem{proposition}[theorem]{Proposition}
\newtheorem{observation}[theorem]{Observation}
\newtheorem{problem}{Problem}
\theoremstyle{definition}
\newtheorem{definition}[theorem]{Definition}
\theoremstyle{remark}
\newtheorem{remark}[theorem]{Remark}
\newcommand\remove[1]{}
\def\ma{\mathcal{A}}
\def\mb{\mathcal{B}}
\def\mj{\mathcal{J}}
\def\mf{\mathcal{F}}
\def\me{\mathcal{E}}
\def\ml{\mathcal{L}}
\def\mM{\mathcal{M}}
\def\mN{\mathcal{N}}
\def\mt{\mathcal{T}}
\def\f2{\mathbb{F}_2}
\def\dist{\hskip0.02cm{\rm dist}\hskip0.01cm}
\def\lip{\hskip0.02cm{\rm Lip}\hskip0.01cm}
\newcommand{\ep}{\varepsilon}
\newcommand{\lin}{{\rm lin}\hskip0.02cm}
\newcommand{\inn}[2]{\langle #1, #2 \rangle}
\newcommand{\lp}[1]{\left( #1 \right)}
\newcommand{\ls}[1]{\left[ #1 \right]}
\newcommand{\lc}[1]{\left\{ #1 \right\}}
\newcommand{\av}[1]{\left| #1 \right|}
\newcommand{\nm}[1]{\left\| #1 \right\|}
\newcommand{\ve}{\varepsilon}
\newcommand{\reo}{\mathbb{R}}
\begin{document}

\title{Dvoretzky-type theorem for locally finite subsets of a Hilbert space}

\author{Florin Catrina, Sofiya Ostrovska, and Mikhail I.~Ostrovskii}

\date{\today}
\maketitle

\noindent{\bf Abstract:} The main result of the paper:  Given any $\ep>0$, every locally finite subset of $\ell_2$
admits a $(1+\ep)$-bilipschitz embedding into an arbitrary
infinite-dimensional Banach space. The result is based on two results which are of independent interest: (1) A direct sum of two finite-dimensional Euclidean spaces contains a sub-sum of a
controlled dimension which is $\ep$-close to a direct sum with respect to a $1$-unconditional basis in a two-dimensional space.
(2) For any finite-dimensional Banach space $Y$ and its direct sum $X$ with itself with respect to a $1$-unconditional basis in a two-dimensional space, there exists a $(1+\ep)$-bilipschitz embedding of $Y$ into $X$ which on a small ball coincides with the identity map onto the first summand and on  the complement of a large ball coincides with the identity map onto the second summand.
\medskip

\noindent{\bf R\'esum\'e.} Le r\'esultat principal de l'article:
\'Etant donn\'e $\ep>0$, chaque sous-ensemble localement fini de
$\ell_2$ admet un plongement $(1+\ep)$-bilipschitz dans n'importe
quel espace de Banach de dimension infinie. Le r\'esultat est
bas\'e sur deux r\'esultats qui pr\'esentent un int\'er\^et
ind\'ependant: (1) Une somme directe de deux espaces euclidiens de
dimension finie contient une sous-somme de dimension contr\^ol\'ee
qui est $\ep$-proche d'une somme directe par rapport \`a une base
$1$-inconditionnelle dans un espace \`a deux dimensions. (2) Pour
tout espace de Banach de dimension finie $Y$ et sa somme directe
$X$ avec lui-m\^eme par rapport \`a une base $1$-inconditionnelle
dans un espace \`a deux dimensions, il existe un plongement
$(1+\ep)$-bilipschitz de $Y$ dans $X$ qui co\"incide, sur une
petite boule, avec l'identit\'e sur la premi\`ere composante, et
qui co\"incide, sur le compl\'ement d'une grosse boule, avec
l'identit\'e sur la deuxi\`eme composante. \medskip

\noindent{\bf Keywords:} bilipschitz embedding, Dvoretzky Theorem,
finite-dimensional decomposition, unconditional basis.\medskip

\noindent{\bf MSC 2020:} 46B85, 30L05, 46B07, 51F30.

\begin{large}


\section{Introduction}
All normed vector spaces considered in this paper are over the reals. 

Recall the classical Dvoretzky Theorem \cite{Dvo59,Dvo61} which
proved Grothendieck's conjecture \cite[Section 7]{Gro53}.

\begin{theorem}[{\cite[Section 7]{Dvo61}}]\label{T:Dvo} Let $k\in\mathbb{N}$, $k\ge 2$,  and $0<\ep<1$.
There exists $N = N (k,\ep)\in\mathbb{N}$ so that every normed
space having more than $N$ dimensions - in particular every
infinite-dimensional normed space - has a $k$-dimensional
subspace whose Banach-Mazur distance from the $k$-dimensional Hilbert space is less than $(1+\ep)$.
\end{theorem}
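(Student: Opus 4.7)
The plan is to follow Milman's approach via concentration of measure on the Euclidean sphere, which gives a clean quantitative version of the theorem. First I would reduce to finite dimensions: it suffices to exhibit $N=N(k,\ep)$ such that every $N$-dimensional normed space $(X,\nm{\cdot})$ contains a $k$-dimensional subspace whose Banach-Mazur distance from $\ell_2^k$ is less than $1+\ep$. By John's theorem I may replace $\nm{\cdot}$ by an equivalent norm in John's position, so that there is a Euclidean norm $\av{\cdot}$ on $X$ with $\nm{x}\le\av{x}\le\sqrt{N}\,\nm{x}$ for all $x$; in particular $x\mapsto\nm{x}$ is $1$-Lipschitz with respect to $\av{\cdot}$ on the Euclidean sphere $S^{N-1}$.

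Next I would apply L\'evy's isoperimetric inequality on $S^{N-1}$ to the $1$-Lipschitz function $f(x)=\nm{x}$. Writing $M=\int_{S^{N-1}}\nm{x}\,d\sigma$ for its mean with respect to the normalized rotation-invariant measure $\sigma$, concentration yields
\[
\sigma\lc{x\in S^{N-1}:\av{\nm{x}-M}>\eta M}\le C\exp\lp{-cN\eta^2 M^2}
\]
for absolute constants $c,C>0$ and every $\eta\in(0,1)$. Thus, on an exponentially large subset of $S^{N-1}$, the norm $\nm{\cdot}$ is multiplicatively within $\eta$ of $M$.

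The third step is a random subspace and $\eta$-net argument. For a Haar-random $k$-dimensional subspace $E\subset X$, the conditional measure induced by $\sigma$ on $E\cap S^{N-1}$ is uniform, and one can pick in $E$ an $\eta$-net $\mathcal{N}$ of its Euclidean unit sphere with $\av{\mathcal{N}}\le(3/\eta)^k$. A union bound shows that with positive probability $\av{\nm{y}-M}\le\eta M$ for every $y\in\mathcal{N}$, provided
\[
\lp{3/\eta}^k\cdot C\exp\lp{-cN\eta^2 M^2}<1.
\]
A standard successive-approximation argument then lifts this estimate from $\mathcal{N}$ to all unit vectors of $E$, producing a $(1+O(\eta))$-isomorphism of $E$ onto $\ell_2^k$.

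The main obstacle, and the heart of the Dvoretzky-Milman argument, is the lower bound $M\ge c\sqrt{(\log N)/N}$ for $\nm{\cdot}$ in John's position. I would handle this by a variational estimate using the contact points of John's ellipsoid, which provide a decomposition of the identity whose spherical average forces $M$ to be large (equivalently, via comparison with the $\ell_\infty^N$ worst case). Inserting this bound, the net condition becomes roughly $\lp{3/\eta}^k\exp(-c'\eta^2\log N)<1$, which is satisfied once $N\ge\exp\lp{C k \log(1/\ep)/\ep^2}$ after the choice $\eta\asymp\ep$. The infinite-dimensional case then follows by passing to any $N$-dimensional subspace.
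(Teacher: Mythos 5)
The paper does not prove Theorem~\ref{T:Dvo}; it records it as a classical background fact with a citation to Dvoretzky's original paper \cite{Dvo61}, and subsequently invokes it as a black box (e.g.\ in the proof of Lemma~\ref{L:3rdFDD}). Your sketch is a correct outline of Milman's concentration-of-measure proof, which is genuinely different from Dvoretzky's original combinatorial argument: you pass to John's position so that $\nm{\cdot}$ is $1$-Lipschitz for the Euclidean metric, invoke L\'evy's isoperimetric inequality to concentrate $\nm{\cdot}$ around its spherical mean $M$, take a union bound over an $\eta$-net of a Haar-random $k$-dimensional section, lift from the net to the whole sphere by successive approximation, and feed in the Dvoretzky--Rogers-type lower bound $M\gtrsim\sqrt{(\log N)/N}$ coming from the contact points of the John ellipsoid. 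All of these steps are standard and correct; the one place where you are light on detail is precisely that lower bound on $M$ (the ``heart'' as you say), which requires the Dvoretzky--Rogers lemma or an equivalent variational argument with the decomposition of the identity, and the precise way the conditional uniformity on a random section is used (fix a net of $S^{k-1}$, rotate by a random orthogonal matrix, union bound). Compared with Dvoretzky's original proof, Milman's route is shorter, conceptually cleaner, and yields explicit quantitative control $N(k,\ep)\le\exp(Ck\log(1/\ep)/\ep^2)$; the paper simply needs the qualitative statement and so leaves the proof to the literature.
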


In this connection, it is natural to call a result establishing
the significant presence of Hilbert space structures in an
arbitrary infinite-dimensional Banach space a {\it Dvoretzky-type
theorem}.\medskip

The following classes of spaces and embeddings are very important in applications, see \cite{BL00, NY12, Ost13}.

Recall that a metric space is called {\it locally finite}
if each ball of finite radius in it contains finitely many elements.  A map  $F: \mM\to\ml$  between two metric spaces $(\mM,d_\mM)$ and $(\ml, d_\ml)$ is called a {\it bilipschitz embedding} if there exist constants $C_1,C_2>0$ so that for all $u,v\in \mM$
\[C_1d_\mM(u,v)\le d_\ml(F(u),F(v))\le C_2d_\mM(u,v).\]
The {\it distortion} of $F$ is defined as
$\lip(F)\cdot\lip(F^{-1}|_{F(\mM)})$, where $\lip(\cdot)$ denotes
the Lipschitz constant. A bilipschitz embedding whose distortion
does not exceed $C\in[1,\infty)$ is called {\it $C$-bilipschitz}.
An embedding satisfying $d_\ml(F(u),F(v))=d_\mM(u,v)$ is called an
{\it isometric embedding}.

A map $F:(\mM,d_\mM)\to (\ml,d_\ml)$ between two metric spaces is called a
{\it coarse embedding} if there exist non-decreasing functions
$\rho_1,\rho_2:[0,\infty)\to[0,\infty)$ such that
$\lim_{t\to\infty}\rho_1(t)=\infty$ and
$$\forall u,v\in \mM~ \rho_1(d_\mM(u,v))\le
d_\ml(F(u),F(v))\le\rho_2(d_\mM(u,v)).$$
\medskip

The main goal of this paper is to prove the following
Dvoretzky-type theorem:

\begin{theorem}\label{T:Main} Given any $\ep>0$, every locally finite subset of $\ell_2$
admits a $(1+\ep)$-bilipschitz embedding into an arbitrary
infinite-dimensional Banach space.\end{theorem}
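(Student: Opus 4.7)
The plan is to construct the desired embedding $\phi:S\to Z$ as a pointwise limit of bilipschitz embeddings $\phi_n$ of larger and larger finite pieces of $S$, in such a way that each $\phi_{n+1}$ genuinely extends $\phi_n$. The extension mechanism combines Dvoretzky's theorem (to find nearly-Euclidean finite-dimensional subspaces of the target $Z$ of arbitrarily large dimension), statement (1) from the abstract (to upgrade a Euclidean direct sum inside $Z$ to one carrying an almost $1$-unconditional two-dimensional direct-sum structure of controlled dimension), and statement (2) (which supplies an embedding $Y\to Y\oplus_U Y$ that is the identity onto the first summand on a small ball and the identity onto the second summand outside a large ball, letting us retain the previously constructed image while carving out room for the new points of $S$).

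For the setup, fix positive numbers $\ep_n$ with $\prod_n(1+\ep_n)\le 1+\ep$, and choose radii $0=r_0<r_1<r_2<\cdots\to\infty$ so that each $F_n:=S\cap \overline{B}(0,r_n)\subset \ell_2$ is a finite set containing $F_{n-1}$; such radii exist because $S$ is locally finite. The inductive hypothesis at stage $n$ is that an embedding $\phi_n:F_n\to Z$ of distortion at most $\prod_{k\le n}(1+\ep_k)$ has been constructed, with image inside a finite-dimensional subspace $E_n\subset Z$ that is $(1+\delta_n)$-isomorphic to a Euclidean space. For the inductive step, invoke Dvoretzky on $Z$ to find a further nearly-Euclidean subspace $E'_n$ whose dimension is large enough to isometrically accommodate $F_{n+1}$. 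Model $E_n\oplus E'_n$ by a Euclidean direct sum and apply statement (1) to extract a sub-sum $Y_n\oplus Y'_n$ of controlled dimension that, after identifying $Y'_n$ with a copy of $Y_n$ by a near-isometry, is $\ep_{n+1}$-close to a $1$-unconditional two-dimensional sum $Y_n\oplus_U Y_n$. Then apply statement (2) to obtain a $(1+\ep_{n+1})$-bilipschitz map $\psi_n:Y_n\to Y_n\oplus_U Y_n$ equal to the identity onto the first summand on a ball containing $\phi_n(F_n)$ and to the identity onto the second summand outside a ball lying below the radial level of $F_{n+1}\setminus F_n$. Set $\phi_{n+1}:=\psi_n\circ\phi_n$ on $F_n$ (this agrees with $\phi_n$, since that image sits in the identity zone of $\psi_n$), and place the points of $F_{n+1}\setminus F_n$ isometrically in the second copy of $Y_n$, so that they land in the region where $\psi_n$ is the identity onto the second summand. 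The resulting $\phi_{n+1}$ extends $\phi_n$ with distortion at most $\prod_{k\le n+1}(1+\ep_k)$.

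Because each point of $S$ is placed at some finite stage and never moved again, the pointwise limit $\phi=\lim_n\phi_n$ is well-defined on $S$, with distortion at most $\prod_n(1+\ep_n)\le 1+\ep$. The main obstacle will be the coordination of scales across the infinitely many inductive stages: the ``small ball'' radius in each application of statement (2) must grow fast enough to contain the ever-larger image $\phi_n(F_n)$, the ``large ball'' radius must lie strictly below the $\ell_2$-distance at which the shell $F_{n+1}\setminus F_n$ is placed, and the output dimensions of Dvoretzky and of statement (1) must remain large enough to support the next round. Calibrating $r_n$, $\ep_n$, $\delta_n$, and the intermediate subspace dimensions into a consistent inductive scheme is the principal technical burden; once this bookkeeping is arranged, the distortion bound follows from a telescoping product.
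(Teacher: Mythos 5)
Your high-level plan — combine Dvoretzky, the almost-unconditionality result, and the bending theorem to embed an ever-growing finite piece of $S$ — is the same trio of ingredients the paper uses. However, the inductive/greedy implementation you propose has two gaps that are not mere ``bookkeeping,'' and the paper's actual argument is structured specifically to avoid them.

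\textbf{Gap 1 (the extension is not well-defined).} At stage $n$, $\phi_n(F_n)$ lives inside $E_n$, and you then apply statement~(1) (Theorem~\ref{T:AUC}) to $E_n\oplus E'_n$ to obtain $n$-dimensional sub-sums $Y_n\subset E_n$ and $Y'_n\subset E'_n$. That theorem gives \emph{no control whatsoever} over which subspaces $Y_n, Y'_n$ come out — its proof is a chain of Dvoretzky/Gordon-type extractions in which the subspace is produced by a probabilistic/compactness argument. In particular there is no reason for $Y_n$ to contain (or even to meet non-trivially) $\operatorname{span}\phi_n(F_n)$. So the composite $\psi_n\circ\phi_n$ is simply not defined: $\phi_n(F_n)$ is typically not in the domain $Y_n$ of the bending $\psi_n$. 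This is why the paper's Section~\ref{S:ConstrEmb} constructs the \emph{entire} FDD $\{V_i\}$ (Lemma~\ref{L:3rdFDD}) — using Mazur's method for basic sequences, Dvoretzky, and Theorem~\ref{T:AUC} — \emph{before} defining the embedding, so that the image at each annular level lands inside a piece of the FDD that was already designated to receive it; nothing built at an earlier stage has to fit into a subspace discovered at a later stage.

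\textbf{Gap 2 (the inductive hypothesis cannot be maintained).} Even granting Gap~1, after one bending the image $\phi_{n+1}(F_{n+1})$ lives inside the $U$-unconditional sum $Y_n\oplus_U Y'_n$. For a general target $X$, this two-dimensional unconditional norm $U$ can be anywhere between $\ell_1^2$ and $\ell_\infty^2$ (Observation~\ref{O:mM}), so $Y_n\oplus_U Y'_n$ is emphatically \emph{not} $(1+\delta)$-isomorphic to Euclidean; your hypothesis ``image lies in a nearly-Euclidean $E_{n+1}$'' fails at the next stage, and Theorem~\ref{T:AUC} (which requires both summands to be isometric to $\ell_2^N$) cannot be applied again. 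The paper resolves this with an \emph{alternating} structure that your sketch omits: Lemma~\ref{L:3rdFDD} builds the FDD so that $V_{2i-1}\oplus V_{2i}$ is nearly $Z_i$-unconditional while $V_{2i}\oplus V_{2i+1}$ is nearly $\ell_2$-Euclidean, and the embedding alternates between a bending in the $Z_i$-sum ($\mt_{2i-1}$, which rotates within $F_i$) and a bending in the $\ell_2$-sum ($\mt_{2i}$, which transports from $F_i$ into the larger Euclidean $F_{i+1}$). The Euclidean bending is what returns you to a nearly-Euclidean ambient at the right moment; without it the construction cannot iterate.

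A smaller but still real issue: you ask the new shell $F_{n+1}\setminus F_n$ to land entirely outside the outer radius $R$ of the bending. The ratio $R/r$ is forced by \eqref{E:CondR} to be $\exp(\pi c_Z/(2\ep_{n+1}))$, which is a fixed number $>1$, whereas the radii of a general locally finite set (e.g.\ $\{\sqrt{n}\,e_n\}$) have ratios tending to $1$; you cannot always choose $r_n$ to open such a gap. The paper does not need this: its annuli $\ma_{2j,2j+3}$ overlap, the bendings agree on overlaps, and the global bilipschitz estimate of Theorem~\ref{T:BendDetail} handles the points sitting inside a transition zone. This is also what makes Case~3 of the paper's distortion check work.

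In short, the ``top-down'' order in the paper (build the FDD with prescribed almost-unconditional / almost-Euclidean alternation, then define $\Phi$ piecewise by compatible bendings and verify three cases) is not cosmetic — it is precisely what dodges both of the obstructions above, and a greedy pointwise-limit construction in the order you propose does not appear to be repairable without effectively reorganizing it into the paper's scheme.
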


Note that there exist locally finite subsets of $\ell_2$ which do
not admit isometric embeddings into some infinite-dimensional
Banach spaces, see \cite[Theorem~1.8]{OO19b}.\medskip

At this point, it is appropriate to present a short overview of the available Dvoretzky-type results and related open problems.\medskip

First, we recall the open problem on the validity of a {\it finite isometric Dvoretzky Theorem}  for all infinite-dimensional Banach spaces.

\begin{problem}[\cite{Ost15}, published in \cite{KO18}] Do there exist a finite subset $F$ of $\ell_2$
and an infinite-dimensional Banach space $X$ such that $F$ does
not admit an isometric embedding into $X$?
\end{problem}

A related negative result for spaces $\ell_p$, $1<p<\infty$, $p\ne
2$ was proved in \cite{KO18}.
\medskip

The following weaker version of Theorem \ref{T:Main} was proved in
\cite[Theorem 1]{Ost06}.

\begin{theorem}\label{T:CoarseLF} Each locally finite subset of
$\ell_2$ admits a coarse embedding into an arbitrary
infinite-dimensional Banach space.
\end{theorem}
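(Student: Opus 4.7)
The plan is to combine Dvoretzky's theorem with a radial shell decomposition of $A$. Fix $a_0\in A$ and, for a sequence $0=R_0<R_1<R_2<\cdots\to\infty$ growing moderately (say $R_n=2^n$), set
\[
A_n=\{a\in A:R_n\le \|a-a_0\|_2<R_{n+1}\}.
\]
Each $A_n$ is finite by local finiteness, and $A_n-a_0$ spans a subspace of $\ell_2$ of some finite dimension $d_n$. On the target side, Mazur's theorem produces a basic sequence in $X$, and Theorem~\ref{T:Dvo} applied to finite stretches of that basic sequence yields a sequence of finite-dimensional subspaces $(X_n)_{n\ge 0}$ forming a Schauder finite-dimensional decomposition (FDD) with uniform block-projection constant $K$, each $X_n$ of dimension at least $d_n$ and Banach--Mazur distance at most $1+\eta_n$ from $\ell_2^{\dim X_n}$, with $\eta_n\downarrow 0$.

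For each $n$ choose a $(1+\eta_n)$-bilipschitz linear isomorphism $T_n\colon (\lin(A_n-a_0),\|\cdot\|_2)\to X_n$. A naive assembly $F(a)=T_n(a-a_0)$ for $a\in A_n$ would fail: points at tiny $\ell_2$-distance lying in adjacent shells are sent into different FDD blocks, and the block projection then forces their $X$-distance to be $\gtrsim R_n$, breaking the upper coarse estimate. The remedy is a smoothed assembly: enlarge each shell to an overlapping collar, pick a Lipschitz partition of unity $(\chi_n(\|a-a_0\|_2))_n$ subordinate to the collars (each $\chi_n$ of Lipschitz constant $O(1/R_n)$ and at most two indices active at any point), and define
\[
F(a)=\sum_{n}\chi_n(\|a-a_0\|_2)\,T_n(a-a_0),
\]
so that each summand lives in a distinct block of the FDD.

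The upper estimate $\|F(a)-F(a')\|_X\le\rho_2(\|a-a'\|_2)$ (in fact Lipschitz control) follows from splitting each nonzero summand into a $\chi_n(r)T_n(a-a')$ contribution and a $(\chi_n(r)-\chi_n(r'))T_n(a'-a_0)$ contribution, the latter bounded by $O(\|a-a'\|_2)$ thanks to $R_{n+1}/R_n$ being bounded. The main obstacle is the lower estimate $\|F(a)-F(a')\|_X\ge\rho_1(\|a-a'\|_2)$ with $\rho_1(t)\to\infty$, which is handled by projecting onto an FDD block: for $a,a'$ in a common shell core one recovers a $(1+\eta_n)$-bilipschitz estimate from $T_n$, while for $a\in A_n$ and $a'\in A_m$ with $m$ significantly larger than $n$, projecting $F(a)-F(a')$ onto $X_m$ retrieves a term comparable to $\|a'-a_0\|_2\asymp R_m\asymp\|a-a'\|_2$. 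The delicate case is that of neighboring shells, where one must verify that the partition-of-unity gluing does not collapse the lower bound near collar overlaps; it is exactly the simultaneous tuning of $(R_n)$, $(d_n)$, $(\eta_n)$ and the collar widths that makes all the estimates fit together, and this bookkeeping is the main combinatorial challenge of the proof.
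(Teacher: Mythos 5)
The paper does not give its own proof of Theorem~\ref{T:CoarseLF}: it quotes it from \cite[Theorem~1]{Ost06}, and the method it develops for the stronger Theorem~\ref{T:Main} is quite different from yours. Instead of a partition-of-unity gluing, the paper transitions between consecutive finite-dimensional blocks by the norm-preserving \emph{bending} $T(x)=(c(x)x,\,s(x)x)$ of Theorem~\ref{T:BendDetail}, with $\|(c(x),s(x))\|_Z=1$, and this only works because the FDD is engineered (Theorem~\ref{T:AUC}, Lemma~\ref{L:3rdFDD}) so that consecutive pairs of blocks carry an almost $1$-unconditional two-dimensional sum. So your proposal is a genuinely different route, closer to the shell constructions in \cite{Ost06,BL08,Ost09} than to this paper's argument.

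As written, though, your sketch has a real gap exactly where you defer to ``bookkeeping.'' First, the formula $F(a)=\sum_n\chi_n(\|a-a_0\|_2)T_n(a-a_0)$ is not defined for $a$ in a collar overlap, since $T_n(a-a_0)$ is needed for two indices $n$ while $T_n$ is declared only on $\lin(A_n-a_0)$; taking nested domains fixes this but changes the dimension count you feed to Dvoretzky. Second, and more seriously, the lower estimate for $a,a'$ in a common overlap region does not follow from a single block projection. Writing
\[
P_n\bigl(F(a)-F(a')\bigr)=\chi_n(r_a)\,T_n(a-a')+\bigl(\chi_n(r_a)-\chi_n(r_{a'})\bigr)\,T_n(a'-a_0),
\]
the second term has norm of order $\mathrm{Lip}(\chi_n)\cdot|r_a-r_{a'}|\cdot r_{a'}$. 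Because $\chi_n$ must climb from $0$ to $1$ over a transition region whose width cannot exceed the radial scale on which it lives, the product $\mathrm{Lip}(\chi_n)\cdot r_{a'}$ is of order $1$ in the outer part of the collar regardless of how you choose $R_{n+1}/R_n$ or the collar widths, so this correction term is comparable to the main term $\chi_n(r_a)\|a-a'\|_2$ and can dominate it; indeed $P_n$ vanishes exactly when $a,a'$ are collinear with $a_0$ and $\chi_n(r_a)/\chi_n(r_{a'})=r_{a'}/r_a$. Salvaging the estimate requires a joint argument over both active blocks, with a quantitative relation on the pair of coefficients (this is precisely what the paper obtains from unconditionality and the normalization $\|(c,s)\|_Z=1$), and that argument is missing from your sketch. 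Until it is supplied, ``the estimates fit together by tuning the parameters'' is an assertion, not a proof.
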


Using the technique of \cite{BL08}, different from that employed in \cite{Ost06}, Theorem \ref{T:CoarseLF} was strengthened to

\begin{theorem}[{\cite[Theorem 4.3]{Ost09}}]\label{T:BilipLF} Each locally finite subset of
$\ell_2$ admits a bilipschitz embedding into arbitrary
infinite-dimensional Banach space.
\end{theorem}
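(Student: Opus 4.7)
The plan is to build the $(1+\ep)$-bilipschitz embedding $\Phi: A \to X$ as a pointwise limit of a compatible sequence of partial embeddings $\Phi_n: A_n \to X$, constructed inductively by combining Dvoretzky's theorem with the two auxiliary results (1) and (2) announced in the abstract. (This actually proves the stronger Theorem \ref{T:Main}, from which Theorem \ref{T:BilipLF} follows immediately.)

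First I would write $A = \bigcup_{n\ge 1} A_n$ as an increasing union of finite subsets, say $A_n := A \cap B_{\ell_2}(0, R_n)$ for a rapidly increasing sequence $R_n \uparrow \infty$, let $V_n \subset \ell_2$ be the finite-dimensional Euclidean subspace spanned by $A_n$ (so $V_n \subset V_{n+1}$), and pick positive tolerances $\ep_n$ with $\prod_{n} (1+\ep_n)^2 \le 1+\ep$. The scaffolding inside $X$ would be built by iterating Dvoretzky's theorem together with result~(1): Dvoretzky yields a subspace of $X$ that is $(1+\ep_n)$-isomorphic to a Euclidean space of sufficiently large dimension, and result~(1) extracts a subspace of that Euclidean sum which splits, up to $\ep_n$, as a two-term direct sum with respect to a $1$-unconditional norm on $\reo^2$. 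Carrying this out recursively produces finite-dimensional subspaces $W_1, W_2, \dots \subset X$ with almost-isometric identifications to Euclidean spaces containing (copies of) $V_n$, such that consecutive summands behave, inside $X$, almost like factors in a $1$-unconditional $\reo^2$-sum.

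With this scaffolding I would construct $\Phi_n$ inductively, demanding $\Phi_{n+1}|_{A_n} = \Phi_n$. The base case $\Phi_1$ is the Dvoretzky almost-isometry $V_1 \to W_1$ restricted to $A_1$. For the inductive step, view $V_{n+1}$ as the space $Y$ in result~(2) and realize $Y \oplus Y$, with its $1$-unconditional $\reo^2$-norm, approximately inside $W_n \oplus W_{n+1}$; result~(2) produces a $(1+\ep_{n+1})$-bilipschitz map $V_{n+1} \to V_{n+1} \oplus V_{n+1}$ which is the identity into the first factor on a small ball and the identity into the second factor outside a large ball. Choosing the small-ball radius strictly greater than $R_n$ forces $\Phi_{n+1}$ to agree with $\Phi_n$ on $A_n$ after composing with the underlying Dvoretzky identifications, while the large-ball behavior routes new points of large Euclidean norm into the fresh direction $W_{n+1}$. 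Taking the pointwise limit yields $\Phi: A \to X$ with distortion at most $\prod_n (1+\ep_n)^2 \le 1+\ep$.

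The principal obstacle is the cross-stage compatibility of the scaffolding: the image $\Phi_n(A_n)$ must sit inside a subspace of $X$ that can then be viewed as the first factor of the next $1$-unconditional $\reo^2$-sum. Result~(1) is precisely what permits this nesting with vanishing error, and result~(2) is what allows the inductive extension to leave the already-embedded points untouched while placing the new points with distortion at most $1+\ep_{n+1}$. Local finiteness of $A$ enters only through finiteness of each $A_n$, which makes it feasible to arrange the small-ball and large-ball thresholds of result~(2) compatibly at each stage, since only finitely many points must be fitted inside the "small-ball" regime at any step.
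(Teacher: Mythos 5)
Your proposal targets the stronger Theorem \ref{T:Main}, which is the right move: the paper does not re-prove Theorem \ref{T:BilipLF} (it is cited from \cite{Ost09}) but supersedes it by Theorem \ref{T:Main}. At the level of strategy you have identified exactly the scheme the paper uses in Section \ref{S:ConstrEmb}: decompose the locally finite set into concentric shells, build a finite-dimensional decomposition in $X$ using Dvoretzky's theorem together with the almost-unconditionality result (Theorem \ref{T:AUC}), and stitch the shells together using the bending result (Theorem \ref{T:BendDetail}), arranging the thresholds so that consecutive pieces agree on overlaps.

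There is, however, a genuine gap in your inductive step, and it is precisely the point the paper has to work hard to circumvent. You propose a single bending per stage: ``view $V_{n+1}$ as $Y$ in result (2) and realize $Y\oplus_Z Y$ approximately inside $W_n\oplus W_{n+1}$,'' with compatibility forced by making the small-ball radius exceed $R_n$. But both ingredients you invoke are dimension-rigid: Theorem \ref{T:AUC} produces a $Z$-unconditional pair of subspaces of \emph{equal} dimension inside $\ell_2^N\oplus\ell_2^N$, and Theorem \ref{T:BendDetail} bends $Y$ inside $Y\oplus_Z Y$, two copies of the \emph{same} space. Since $\dim V_n$ strictly increases along a locally finite set, consecutive factors in your scaffolding cannot all have matching dimensions, and the naive induction breaks: at stage $n+1$ you would need $W_n$ to be an almost-isometric copy of $V_{n+1}$, while it was built at stage $n$ as a copy of $V_n$ (and enlarging it at stage $n$ in anticipation of all future $V_m$ is impossible, since $\dim V_m\to\infty$).

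The paper resolves this by interleaving \emph{two} kinds of bendings per dimension increment (see Lemma \ref{L:3rdFDD} and the maps $\mt_{2i-1},\mt_{2i}$). An odd step bends inside a pair $V_{2i-1}\oplus V_{2i}$ that Theorem \ref{T:AUC} makes into an almost $F_i\oplus_{Z_i}F_i$ sum with \emph{equal}-dimensional factors; this is the only place the almost-unconditionality result is needed. An even step then bends inside $V_{2i}\oplus V_{2i+1}$, which is an almost $F_i\oplus_2 F_{i+1}$ sum with \emph{unequal}-dimensional factors; here no unconditionality result is needed because both summands live in a single nearly-Euclidean Dvoretzky subspace $U_{i+1}$, where the $\ell_2$-sum is automatically $1$-unconditional, and Theorem \ref{T:BendDetail} is applied to $Y=F_{i+1}$ with the inner region falling in $F_i$. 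This alternation is what makes the pieces match on overlaps (each $F_i$ appears as the second factor of one bending and as the first factor of the next) while still letting the dimension grow. Without explicitly building in a dimension-jump step of this kind, your induction cannot close.
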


The upper estimate for the distortion of embeddings of a locally
finite subspace of $\ell_2$ into an arbitrary infinite-dimensional
Banach space obtained in \cite{Ost09} is $100$.
The present paper aims to prove the best possible result in this direction.

As another development, Nowak \cite{Now06} showed that the
embedding techniques of \cite{DG03} can be used to find coarse
embeddings of Hilbert space into Banach spaces for which such an
embeddability appeared to be somewhat unexpected. Later,
Ostrovskii \cite{Ost09} combined the technique of Nowak
\cite{Now06} with the results of \cite{OS94} and strengthened
Nowak's result as follows:

\begin{theorem}[{\cite[Theorem 5.1]{Ost09}}]\label{T:CoarseL2} Let $X$ be a Banach space containing a
subspace with an unconditional basis which does not contain
$\ell_\infty^n$ uniformly. Then $\ell_2$ embeds coarsely into $X$.
\end{theorem}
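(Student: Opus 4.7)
The plan is to combine three ingredients: a reduction to the case where $X$ itself has a $1$-unconditional basis avoiding $\ell_\infty^n$ uniformly; extraction of a $1$-unconditional finite-dimensional decomposition (FDD) of a subspace of $X$ into almost-Euclidean blocks of unboundedly growing dimension, which is where \cite{OS94} enters; and the Nowak adaptation of the Dadarlat--Guentner coarse embedding scheme, which assembles a coarse embedding of $\ell_2$ from scale-dependent Euclidean ``cells'' summed through the unconditional basis.

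Since the hypothesised subspace is isometrically included in $X$, it suffices to embed $\ell_2$ coarsely into that subspace, and so I may assume $X$ itself has a $1$-unconditional basis $(e_i)$ and does not contain $\ell_\infty^n$ uniformly. By Maurey--Pisier this forces $X$ to have finite cotype, and \cite{OS94} then supplies a disjointly supported block sequence of the basis whose linear spans $X_n$ have $\dim X_n=d_n\to\infty$ and are $(1+\ep_n)$-isomorphic to $\ell_2^{d_n}$ with $\ep_n\to 0$. Being blocks of a $1$-unconditional basis, the $X_n$ form a $1$-unconditional FDD of their closed linear span in $X$. I then split an orthonormal basis of $\ell_2$ into consecutive blocks of sizes $d_n$, producing orthogonal projections $P_n\colon\ell_2\to H_n$ with $\dim H_n=d_n$, and fix $(1+\ep_n)$-isomorphisms $T_n\colon H_n\to X_n$. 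After selecting scale parameters $r_n$ and weights $t_n$, I define bump/truncation-type maps $\varphi_n\colon\ell_2\to X_n$ that localise $T_n(P_nx)$ at scale $r_n$ and rescale by $t_n$; the candidate coarse embedding is $f(x)=\sum_n\varphi_n(x)$, convergent in $X$ by unconditionality of the FDD.

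The upper coarse modulus $\rho_2$ follows by summing Lipschitz-type estimates for each $\varphi_n$, which are uniformly controlled because each $X_n$ is almost Hilbertian. The main obstacle is the lower modulus $\rho_1$: given $\|x-y\|=r$, one must identify a scale $n=n(r)$ at which $\|\varphi_n(x)-\varphi_n(y)\|$ is bounded below by a quantity tending to infinity with $r$, and then use $1$-unconditionality to ensure that this contribution is not swamped by cancellations across other scales. Calibrating $(r_n,t_n)$ to achieve this simultaneously with the summability needed for convergence is the technical heart of the argument; the Hilbertian structure of the $X_n$ is essential, since only there can one simultaneously control the modulus of continuity and the separation modulus of the truncated-normalisation maps uniformly in the dimension, which is precisely the quantitative content of the absence of $\ell_\infty^n$ uniformly.
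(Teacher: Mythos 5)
There is a genuine gap, and it is at the pivotal step. You claim that from an unconditional basis not containing $\ell_\infty^n$ uniformly one can, via \cite{OS94}, extract disjointly supported blocks whose spans $X_n$ are $(1+\ep_n)$-isomorphic to $\ell_2^{d_n}$ with $d_n\to\infty$ and $\ep_n\to 0$. This is false. Take $X=\ell_1$ with its unit vector basis: the basis is $1$-unconditional and $\ell_1$ has cotype $2$, so it does not contain $\ell_\infty^n$ uniformly; yet every normalized block of this basis spans a space isometric to $\ell_1^{d}$, and $d_{\rm BM}(\ell_1^d,\ell_2^d)=\sqrt{d}\to\infty$. So no almost-Euclidean block FDD exists. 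Dvoretzky's theorem produces almost-Euclidean subspaces but not \emph{block} subspaces of a prescribed basis, and Krivine/Maurey--Pisier yield block $\ell_p^n$'s only for a $p$ forced on you by $X$, not for $p=2$. Not containing $\ell_\infty^n$ uniformly is equivalent to finite cotype, which is far weaker than the local Hilbertian structure you invoke. Since you explicitly say the Hilbertian structure of the $X_n$ is ``essential'' to controlling the moduli uniformly in dimension, the argument as proposed cannot be repaired simply by recalibrating $(r_n,t_n)$.

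The ingredient actually imported from \cite{OS94} in the proof of \cite[Theorem 5.1]{Ost09} is of a different nature: it is a uniform-homeomorphism result for unit spheres (roughly, that the unit sphere of a space with an unconditional basis of finite cotype is uniformly homeomorphic to $S(\ell_2)$), which plays the role that the Mazur map plays in Nowak's treatment of $\ell_p$. The Dadarlat--Guentner/Nowak scheme is then run over a blocking of the unconditional basis using these sphere maps, and unconditionality controls the assembled sum exactly as you describe; but at no stage are the cells required to be (or can in general be made) almost-Euclidean. Your overall scaffolding --- reduce to a $1$-unconditional basis, block the basis, form $f=\sum_n\varphi_n$ at a sequence of scales, exploit the unconditionality of the FDD for the lower coarse modulus --- matches the strategy, but the identification of the \cite{OS94} ingredient and the claimed reduction to almost-Euclidean blocks are wrong, and $\ell_1$ already refutes that reduction.
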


Theorem \ref{T:CoarseL2} together with Theorem \ref{T:CoarseLF}
led to the problem: {\it Is it true that $\ell_2$ embeds
coarsely into an arbitrary infinite-dimensional Banach
space?} This problem was posed in \cite[pp.~1--2]{Ost06} and published in \cite[Problem 4.1]{Ost09}.
\medskip

A positive answer to this problem would be a significant strengthening of Theorem \ref{T:CoarseLF}, yet, as the matter stands, it was answered in the negative in \cite[Corollary
B]{BLS18}, a typical counterexample is the Tsirelson space
constructed in \cite{Tsi74}.\bigskip

One of the most important directions related to the
Dvoretzky Theorem
is finding optimal estimates for the function $N(k,\ep)$ in the
statement of Theorem \ref{T:Dvo} (see \cite{Mil71}, \cite{MS86},
\cite{Sch06}, \cite{AGM15}, \cite{PV18}, \cite{AGM21}).\medskip

Starting with the paper of Bourgain-Figiel-Milman \cite{BFM86},
a parallel theory for metric spaces was developed. In this theory
the main goal is estimating from below the size - defined  either
as cardinality or in some measure-theoretic ways - of subsets of a
metric space which admit low-distortion embeddings into a Hilbert
space. We list a representative selection of papers devoted to the
results of this type and their applications: \cite{BBM06},
\cite{BLMN05}, \cite{MN07}, \cite{NT12}, \cite{MN13}. See also a
short survey in \cite[Section 8]{Nao12}.
\medskip

Our proof of the main Theorem \ref{T:Main}  will be presented
according to the scheme below:

\begin{itemize}

\item First, an almost-unconditionality result for sums of two
Euclidean spaces will be established in Theorem \ref{T:AUC}.

\item Next, Theorem \ref{T:BendDetail} provides a bending result for
two-dimensional unconditional sums.

\item Finally, combining these results in the spirit of
\cite{OO19}, Theorem \ref{T:Main} will be proved in Section
\ref{S:ConstrEmb}.

\end{itemize}

In addition, we prove a non-bending result, see Theorem
\ref{T:CountGenBendP}. It is related to the
following open problem:

\begin{problem}[{\cite[Problem~5.1]{OO19}}]\label{P:OO19}
Do there exist $\alpha>1$, a locally finite metric space $\mathcal{M}$,
and a Banach space $X$ such that all finite subsets of $\mathcal{M}$ admit
isometric embeddings into $X$, but any bilipschitz embedding of
$\mathcal{M}$ into $X$ has distortion at least $\alpha$?
\end{problem}

We use the standard terminology and notation of Banach space theory \cite{BL00, JL01, LT77, LT79}, local theory \cite{AGM15, AGM21, MS86}, and theory of metric embeddings \cite{Mat02}, \cite{Ost13}.

\section{Almost-unconditionality result}

\begin{definition} Let $Y_1\oplus Y_2$ be a direct sum in which the subspaces $Y_1$ and $Y_2$ are Euclidean, and let $\ep\in[0,1)$. The sum $Y_1\oplus Y_2$ is endowed with a norm whose restrictions to $Y_1$ and $Y_2$ are the Euclidean norms.
We say $Y_1\oplus Y_2$ is {\it $\ep$-invariant} if for any orthogonal operator $O_1$ on $Y_1$ and any orthogonal operator $O_2$ on $Y_2$, the inequality
\begin{equation}\label{E:AlmOrthInv}(1-\ep)\|y_1+y_2\|\le \|O_1y_1+O_2y_2\|\le
(1+\ep)\|y_1+y_2\|\end{equation}
holds.
\end{definition}

As it will be shown below, this invariance is related to unconditionality, see Lemmas \ref{L:InvNorm} and \ref{L:InvToUncond}.

For a direct sum $X=X_1\oplus X_2$ by {\it direct sum projections} we mean projections $P_1:X\to X_1$ and $P_2:X\to X_2$ given by $P(x_1,x_2)=x_1$ and $P(x_1,x_2)=x_2$, respectively.

\begin{theorem}\label{T:AUC} Given $n\in\mathbb{N}$, $\ep\in(0,1),$ and $A\in[1,\infty)$
there exists $N\in \mathbb{N}$, such that, for every direct sum
$X=X_1\oplus X_2$ with both $X_1$ and $X_2$ isometric to $\ell_2^N$,
and the direct sum projections having norms $\le A$,
there are $n$-dimensional subspaces $Y_1\subset X_1$ and $Y_2\subset X_2$, such that the norm on $Y_1\oplus Y_2$ induced from
$X$ is $\ep$-invariant.
\end{theorem}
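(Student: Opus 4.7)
The plan is to locate $Y_1$ and $Y_2$ on which the function $F_{\alpha,\beta}(u_1,u_2) := \|\alpha u_1 + \beta u_2\|_X$ depends only on $(\alpha,\beta)$ up to a multiplicative $\ep$. This is exactly what $\ep$-invariance asks: any two unit vectors in a Euclidean space are related by an orthogonal operator, so $O_1,O_2$ merely permute the directions $u_i = y_i/\|y_i\|_X$, and multiplicative insensitivity of $F$ to these directions gives $\|O_1y_1 + O_2y_2\|_X \approx \|y_1 + y_2\|_X$. By positive homogeneity in $(\alpha,\beta)$ and sign adjustments on the $u_i$'s, it suffices to work on the compact arc $\{\alpha^2+\beta^2=1,\ \alpha,\beta\ge 0\}$, on which the projection bound forces $(2A)^{-1}\le F_{\alpha,\beta}\le \sqrt{2}$.

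The core step is a Dvoretzky--Milman random-subspace argument applied to this two-parameter family on the product sphere. For each fixed $(\alpha,\beta)$, the function $F_{\alpha,\beta}$ is $1$-Lipschitz in each argument on $S(X_1)\times S(X_2)$; let $m(\alpha,\beta)$ denote its median with respect to the product of the uniform measures. Iterating concentration of measure on $S^{N-1}$ gives
\begin{equation*}
\Pr_{(u_1,u_2)}\!\bigl(|F_{\alpha,\beta}(u_1,u_2) - m(\alpha,\beta)| > \delta\bigr) \le 4e^{-c\delta^2 N}.
\end{equation*}
Fix a $\delta$-net $\mathcal{P}$ on the parameter arc (of cardinality $O(1/\delta)$), a reference pair of $n$-dimensional subspaces $Y_i^0\subset X_i$ with $\delta$-nets $\mathcal{N}_i\subset S(Y_i^0)$ of cardinality $\le (3/\delta)^n$, and draw independent Haar-random orthogonal operators $U_i$ on $X_i$. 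For each triple $((\alpha,\beta),v_1,v_2)\in\mathcal{P}\times\mathcal{N}_1\times\mathcal{N}_2$, the images $U_i v_i$ are uniform on $S(X_i)$, so the bad event has probability at most $4e^{-c\delta^2 N}$; a union bound over the $\le C\delta^{-(2n+1)}$ triples gives total failure probability strictly less than $1$ for $N \ge N(n,\ep,A)$, yielding good subspaces $Y_i := U_i Y_i^0$.

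The net-level estimate $|F_{\alpha,\beta}(v_1,v_2)-m(\alpha,\beta)|\le \delta$ is then extended to all of $S(Y_1)\times S(Y_2)$ and to the full parameter arc by the $1$-Lipschitz continuity of $F$ in each of its arguments and in $(\alpha,\beta)$, incurring only $O(\delta)$ additive slack. Since $F_{\alpha,\beta}\ge (2A)^{-1}$, an additive deviation $O(\delta)$ converts to a multiplicative deviation $O(A\delta)$, and taking $\delta\sim \ep/A$ yields
\begin{equation*}
(1-\ep)\|\alpha u_1+\beta u_2\|_X \le \|\alpha u_1'+\beta u_2'\|_X \le (1+\ep)\|\alpha u_1+\beta u_2\|_X
\end{equation*}
for all unit $u_i,u_i'\in S(Y_i)$ and all parameters on the arc, which rehomogenizes to $\ep$-invariance. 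The main obstacle is coordinating three discretization scales -- the parameter net $\mathcal P$, the two $\delta$-nets inside the reference spheres, and the concentration window $\delta$ -- so that the accumulated additive errors still produce a clean multiplicative $\ep$; this bookkeeping is what fixes the quantitative threshold $N(n,\ep,A)$, and it is also what ensures that the only feature of $X$ that enters is the projection bound $A$.
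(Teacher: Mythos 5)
Your plan is correct, and it takes a genuinely different route from the paper. The paper does not invoke L\'evy concentration on the product of spheres; instead it fixes a direction $x_1\in S(X_1)$ and applies Gordon's almost-spherical-sections theorem iteratively at the levels $s\delta x_1$ to produce a subspace $E(X_2,x_1)\subset X_2$ whose sections of the unit ball are almost Euclidean (Lemma~\ref{L:DeltaSect}); it then derives $\omega(\delta)$-invariance with respect to orthogonal maps of $E(X_2,x_1)$ from a bespoke two-dimensional convex-geometry analysis of the cross-sections (Propositions~\ref{P:2Dpoly}, \ref{P:2D}, Lemma~\ref{L:BinsideIncrD}); and it finally closes the loop with a two-stage asymmetric net argument (items {\bf (A)}, {\bf (B)} plus Lemma~\ref{L:NetToEvery}): first a net in $S(U)$, $U\subset X_2$, to find $Y_1\subset X_1$, then a net in $S(Y_1)$ to find $Y_2\subset X_2$. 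Your argument replaces all of this by the Milman concentration machine acting symmetrically on both summands: $F_{\alpha,\beta}(u_1,u_2)=\|\alpha u_1+\beta u_2\|_X$ is $1$-Lipschitz in each spherical variable and in $(\alpha,\beta)$; the projection bound gives $(2A)^{-1}\le F_{\alpha,\beta}\le\sqrt2$, so the median is uniformly bounded away from $0$; Haar-random $U_i$ make $U_iv_i$ uniform on $S(X_i)$, so a union bound over nets of the reference spheres and of the parameter arc, followed by the standard net-to-sphere Lipschitz extension, gives additive (hence, via the lower bound on $F$, multiplicative) control. All the pieces are standard and the bookkeeping closes: the required condition $C\delta^{-(2n+1)}e^{-c\delta^2N}<1$ with $\delta\sim\ep/A$ fixes a single $N=N(n,\ep,A)$ that works simultaneously for every admissible $X$. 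What the two approaches buy: the paper stays inside convex/sectional geometry and makes explicit the geometry of the level sections, but at the cost of an iterated application of Gordon's theorem (one pass per level, then one pass per net point), which yields an iterated-logarithmic dependence of $n$ on $N$; your approach is shorter, avoids the auxiliary asymmetric problem entirely, and -- because the median here is bounded below by $(2A)^{-1}$ rather than potentially tiny as in the general Dvoretzky problem -- yields $N$ polynomial in $n$ and $A/\ep$, a substantial quantitative improvement that the theorem statement does not require but is worth recording.
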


To see that Theorem \ref{T:AUC} can be understood as an
almost-unconditionality result we need the following two lemmas.

\begin{lemma}\label{L:InvNorm} Let $Y=Y_1\oplus Y_2$ be a direct sum of Euclidean subspaces with an $\ep$-invariant norm $\|\cdot\|$. Let
\[|||y_1+y_2|||=\sup_{O_1,O_2{\rm~orthogonal~on~}
Y_1,Y_2}\|O_1y_1+O_2y_2\|,\quad y_1\in Y_1, y_2\in Y_2.\]

Then $|||\cdot|||$ is a norm on $Y_1\oplus Y_2$ satisfying
\[
\|y_1+y_2\|\le |||y_1+y_2|||\le (1+\ep)\|y_1+y_2\|\] and
\[    |||V_1y_1+V_2y_2|||=|||y_1+y_2|||\]
for every orthogonal operators $V_1$  on $Y_1$ and $V_2$  on $Y_2$.  
Also, the norms $\|\cdot\|$ and $|||\cdot|||$ coincide on $Y_1$ and
$Y_2$. Thus, the norm $|||\cdot|||$ is $0$-invariant on $Y=Y_1\oplus Y_2$.
\end{lemma}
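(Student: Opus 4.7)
The plan is to verify the stated properties of $|||\cdot|||$ one by one, each by a short direct argument. The cornerstone is the two-sided inequality in the lemma, which I would establish first: the lower bound $\|y_1+y_2\|\le|||y_1+y_2|||$ comes by inserting $O_1=\mathrm{id}_{Y_1}$, $O_2=\mathrm{id}_{Y_2}$ into the supremum, while the upper bound $|||y_1+y_2|||\le(1+\ep)\|y_1+y_2\|$ is immediate from the $\ep$-invariance hypothesis applied to each admissible pair $(O_1,O_2)$ inside the supremum. In particular this bound guarantees the supremum is finite, so $|||\cdot|||$ is well defined.

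Next I would verify the norm axioms. Homogeneity is immediate upon pulling scalars out of $\|\cdot\|$. For the triangle inequality I would note that for any admissible pair $(O_1,O_2)$,
$$\|O_1(y_1+z_1)+O_2(y_2+z_2)\|\le\|O_1y_1+O_2y_2\|+\|O_1z_1+O_2z_2\|\le|||y_1+y_2|||+|||z_1+z_2|||,$$
so taking the supremum over $(O_1,O_2)$ on the left preserves the bound. Positive definiteness follows from the already established inequality $|||y_1+y_2|||\ge\|y_1+y_2\|$, since $\|\cdot\|$ is a norm.

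Orthogonal invariance is a change-of-variables argument: if $V_1,V_2$ are orthogonal on $Y_1,Y_2$, then $(O_1,O_2)\mapsto(O_1V_1,O_2V_2)$ is a bijection of the set of admissible pairs onto itself, so the supremum defining $|||V_1y_1+V_2y_2|||$ ranges over exactly the same set of values as the supremum defining $|||y_1+y_2|||$. This is precisely the $0$-invariance statement. Finally, coincidence of $\|\cdot\|$ and $|||\cdot|||$ on each $Y_i$ uses that the restriction of $\|\cdot\|$ to $Y_i$ is Euclidean and therefore invariant under orthogonal operators on $Y_i$: for $y_1\in Y_1$ one has $|||y_1|||=\sup_{O_1}\|O_1y_1\|=\|y_1\|$, and symmetrically on $Y_2$.

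I do not anticipate any genuine obstacle here; the entire proof is a sequence of routine manipulations. The only point worth flagging in advance is that one must first secure the upper bound from $\ep$-invariance in order to know the supremum is finite and hence that $|||\cdot|||$ is actually a function into $[0,\infty)$, after which the remaining items fall out essentially by unwinding definitions.
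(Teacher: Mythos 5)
Your proof is correct, and it is exactly the routine verification the paper has in mind when it says "Proof is straightforward." All the steps (identity pair for the lower bound, $\ep$-invariance for the upper bound and finiteness, supremum-over-convex-combination for the triangle inequality, the group bijection $(O_1,O_2)\mapsto(O_1V_1,O_2V_2)$ for invariance, and orthogonal invariance of the Euclidean restriction for coincidence on $Y_1,Y_2$) are sound.
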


\begin{proof} Proof is straightforward.
\end{proof}

A norm $\|(a,b)\|$ on  $\mathbb{R}^2$ is called {\it $1$-unconditional} if $\|(\pm a,\pm b)\|=\|(a,b)\|$ for every
$(a,b)\in \mathbb{R}^2$.

\begin{lemma} \label{L:InvToUncond} If a norm on a direct sum $Y=Y_1\oplus Y_2$ of two
Euclidean spaces satisfies
\begin{equation}\label{E:RotInv}\|O_1y_1+O_2y_2\|=\|y_1+y_2\|\end{equation}
for all $y_1\in Y_1$, $y_2\in Y_2$ and all orthogonal operators
$O_1$ on $Y_1$ and $O_2$ on $Y_2$, then there exists a
$1$-unconditional norm
$\|\cdot\|_Z$ on $\mathbb{R}^2$ such that
\begin{equation}\label{E:Znorm}
\|y_1+y_2\|=\|(\|y_1\|,\|y_2\|)\|_Z
\end{equation}
\end{lemma}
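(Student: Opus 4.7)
The plan is to define $\|\cdot\|_Z$ as the restriction of the ambient norm to a fixed two-dimensional coordinate plane in $Y_1 \oplus Y_2$; the orthogonal-invariance hypothesis \eqref{E:RotInv} will then do all the work. Concretely, I would fix arbitrary Euclidean unit vectors $e_1 \in Y_1$ and $e_2 \in Y_2$ and \emph{define}
\[
\|(a,b)\|_Z := \|a e_1 + b e_2\|, \qquad (a,b) \in \mathbb{R}^2.
\]
Being the pullback of $\|\cdot\|$ along the linear injection $(a,b) \mapsto a e_1 + b e_2$, the function $\|\cdot\|_Z$ is automatically a norm on $\mathbb{R}^2$. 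Its $1$-unconditionality falls out immediately by plugging $O_i = \eta_i I_{Y_i}$ with $\eta_i \in \{-1,+1\}$ (orthogonal operators on $Y_i$) into \eqref{E:RotInv}; this yields $\|(\eta_1 a, \eta_2 b)\|_Z = \|(a,b)\|_Z$.

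It then remains to verify the representation \eqref{E:Znorm}. Here I would exploit transitivity of the orthogonal group of each $Y_i$ on its spheres: given arbitrary $y_i \in Y_i$, one can choose an orthogonal operator $O_i$ on $Y_i$ such that $O_i y_i = \|y_i\| e_i$. A single application of \eqref{E:RotInv} then gives
\[
\|y_1 + y_2\| = \|O_1 y_1 + O_2 y_2\| = \bigl\|\,\|y_1\|\,e_1 + \|y_2\|\,e_2\bigr\| = \|(\|y_1\|,\|y_2\|)\|_Z,
\]
which is exactly \eqref{E:Znorm}.

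I do not anticipate any real obstacle: the argument is a direct unwinding of the hypothesis. Its conceptual content is that orthogonal invariance forces $\|y_1 + y_2\|$ to depend on $(y_1, y_2)$ only through the norms $\|y_1\|$ and $\|y_2\|$, which reduces the identification of $\|\cdot\|$ with a $1$-unconditional norm on $\mathbb{R}^2$ to a single choice of coordinates.
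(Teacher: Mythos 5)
Your proof is correct, and it takes a genuinely cleaner route than the paper's. The paper defines the candidate norm pointwise as a function $f$ on the nonnegative quadrant (via $f(a_1,a_2)=\|y_1+y_2\|$ for any $y_i$ with $\|y_i\|=a_i$), checks well-definedness by transitivity, extends by absolute values, and then must \emph{verify} the triangle inequality by an explicit computation with sign choices. You instead define $\|\cdot\|_Z$ as the pullback of the ambient norm along the linear injection $(a,b)\mapsto ae_1+be_2$; this makes all norm axioms, including the triangle inequality, automatic, and shifts the entire burden to the two short applications of \eqref{E:RotInv} (with $O_i=\pm I$ for unconditionality, and with $O_i$ rotating $y_i$ onto $\|y_i\|e_i$ for the representation formula). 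In effect you use the hypothesis at the same two places the paper does — for well-definedness/sign-invariance and for reduction to coordinate vectors — but you organize the construction so that no separate convexity argument is needed. The one implicit point worth making explicit is that $e_1$ and $e_2$ are linearly independent in $Y$ because $Y_1\cap Y_2=\{0\}$, which is what makes the map $(a,b)\mapsto ae_1+be_2$ injective and hence its pullback a genuine norm; and that the chosen $O_i$ can be taken arbitrary when $y_i=0$.
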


\begin{proof} If the norm of  
 $Y_1\oplus Y_2=\ell_2^{n_1}\oplus\ell_2^{n_2}$
 satisfies
\eqref{E:RotInv}, we can define a nonnegative function $f$ on the
nonnegative quadrant of $\mathbb{R}^2$ by
\[f(a_1,a_2)=\|y_1+y_2\|,\]
where $y_1\in Y_1$ is such that $\|y_1\|=a_1$ and $y_2\in Y_2$ is
such that $\|y_2\|=a_2$. Equality \eqref{E:RotInv} in combination
with the transitivity of the group of orthogonal operators on any
$0$-centered sphere implies that the resulting function
$f(a_1,a_2)$ is well-defined.

We extend $f$ to $\mathbb{R}^2$ by
\[f(a_1,a_2)=f(|a_1|,|a_2|).\]

It remains to verify that the resulting function $f(a_1,a_2)$ is a
$1$-unconditional norm on $\mathbb{R}^2$.

The only norm property that needs checking is the triangle
inequality since the others are immediate from the definition of
$f$.

Let us  verify the triangle inequality. Clearly,
$$f(a_1+b_1,a_2+b_2)=f(|a_1+b_1|, |a_2+b_2|) =f(\rho_1|a_1|+\sigma_1|b_1|,\rho_2|a_2|+\sigma_2|b_2|),$$ for some $\rho$'s and $\sigma$'s belonging to
the set $\{-1,+1\}$. Hence, taking $u_1\in Y_1$ and $u_2\in Y_2$ to be unit
vectors, one has:
\begin{align*}f(a_1+b_1,a_2+b_2)=
\|(\rho_1|a_1|+\sigma_1|b_1|)u_1+(\rho_2|a_2|+\sigma_2|b_2|)u_2\| \\
\leqslant \|\rho_1|a_1|u_1+\rho_2|a_2|u_2\|+
\|\sigma_1|b_1|u_1+\sigma_2|b_2|u_2\| \\= \|
|a_1|(\rho_1u_1)+|a_2|(\rho_2u_2)\|+
\||b_1|(\sigma_1u_1)+|b_2|(\sigma_2u_2)\|\\=f(|a_1|,
|a_2|)+f(|b_1|,|b_2|)=f(a_1,a_2)+f(b_1,b_2).~~~
\qedhere\end{align*}
\end{proof}

\begin{proof}[Proof of Theorem \ref{T:AUC}] We start by picking $N\in\mathbb{N}$, $\ep>0$, $A\in [1,\infty)$, and a direct
sum $X_1\oplus X_2$ satisfying the conditions of Theorem
\ref{T:AUC}. Our goal is to find $n$ such that the conditions of Theorem \ref{T:AUC} are satisfied, and to establish that $n\to\infty$ as $N\to\infty$.

We will consider two metric structures on $X_1\oplus X_2$. One of
them is induced by the norm of $X$, the other is a Euclidean
structure on $X_1\oplus X_2$ for which $X_1$ and $X_2$ are
orthogonal and have the same Euclidean norms as in $X$.

To find the subspaces $Y_1$ and $Y_2$ for a
given $\ep$ and $A$, we start with
an asymmetric problem. More precisely, for some $x_1\in S(X_1)$ (the unit sphere of $X_1$, it is the same in both norms),
consider the space  $\lin(X_2\cup \{x_1\})$ where $\lin$ denotes the linear span 
of $X_2\cup \{x_1\}$. The ``asymmetric
problem'' to which we refer above is to find a subspace
$E(X_2,x_1)$ of $X_2$ such that the closed unit ball  $B$
(in the norm of
the space $X$) of the space $\lin(E(X_2,x_1)\cup\{x_1\})$ is
{\it $\omega$-invariant with respect to orthogonal operators on the
space $E(X_2,x_1)$}, in the sense that
\[(1-\omega)\|\alpha x_1+y_1\|\le \|\alpha x_1+Oy_1\|\le (1+\omega)\|\alpha x_1+y_1\|\]
for every $\alpha\in\mathbb{R}$, every $y_1 \in E(X_2,x_1)$, and every 
orthogonal operator $O$ on $E(X_2,x_1)$.
A selection of $\omega>0$ needed to get an $\ep$-invariant norm will be
specified later. As the first step in the desired direction, we
observe that an application of \cite[Theorem~7 and Remark~8]{Gor88}, which is a quantitative version for the result of \cite[Corollary of Theorem 2]{LM75}, yields Lemma \ref{L:DeltaSect} below.

By a {\it pointed convex body} in a $k$-dimensional affine space $L$ we mean a pair consisting of a full-dimensional bounded convex body and a point in its interior.
We say that a pointed convex body $(K,z)$ in an affine space $L$ with a Euclidean structure is {\it $\delta$-equivalent $(\delta>0)$ to a Euclidean ball} if there exists $r>0$ such that  the following inclusion holds for Euclidean balls in $L$ centered at $z$:
\[B(z,r)\subset K\subset B(z,(1+\delta)r).\]

\begin{lemma}\label{L:DeltaSect} For any $x_1\in S(X_1)$,  for any $0<\delta<1$, there exists a subspace $E(X_2,x_1)$ of
$X_2$ satisfying the conditions:

\begin{enumerate}[{\bf (1)}]

\item \label{I:ToInfty} Its dimension can be estimated from below
in terms of $N$ (recall that $X_2 =\ell_2^N$) and $\delta$; and this dimension tends to
$\infty$ if $\delta$ is fixed and $N\to\infty$. For  convenience, $\delta$ will be chosen in such a way that $ k:=\frac1\delta\in
\mathbb{N}$.

\item Pointed convex bodies whose components are sections of $B$ by affine subspaces $E(X_2,x_1)+ s\delta
x_1$ and points $s\delta
x_1$, where $s=0,1,\dots,\frac1{\delta}-1,$  are
$\delta$-equivalent to Euclidean balls in the Euclidean structure described above. If $((E(X_2,x_1)+x_1)\cap B, x_1)$ is a pointed convex body in $E(X_2,x_1)+x_1$, it is also  required to satisfy the same condition.
\end{enumerate}

\end{lemma}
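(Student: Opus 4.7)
The plan is to apply Gordon's quantitative Dvoretzky-type theorem \cite{Gor88} (in the non-symmetric form referenced as Remark 8, refining \cite{LM75}) iteratively to the finite family of affine sections of $B$ that appear in the statement. First, for each $s \in \{0, 1, \ldots, k\}$ with $k := 1/\delta \in \mathbb{N}$, I would identify the section $K_s := B \cap (X_2 + s\delta x_1)$, after translation by $-s\delta x_1$, with a convex body $\widetilde{K}_s \subset X_2 \cong \ell_2^N$. For $s < k$ one has $\|s\delta x_1\| = s\delta < 1$, so the origin lies in the interior of $\widetilde{K}_s$; the hypothesis $\|P_i\| \le A$ for $i \in \{1,2\}$ yields uniform upper and lower bounds, in terms of $A$, on the circumradius and inradius of $\widetilde{K}_s$ measured in the Euclidean norm of $X_2$. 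The case $s=0$ is automatic because $\widetilde{K}_0$ is exactly the Euclidean unit ball of $X_2$.

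The core of the argument is an inductive selection of nested subspaces $F_0 \supset F_1 \supset \cdots \supset F_k$ of $X_2$. I would set $F_0 := X_2$ and, given $F_{s-1}$ of dimension $n_{s-1}$ for which $(\widetilde{K}_{s'} \cap F_{s-1}, 0)$ is $\delta$-equivalent to a Euclidean ball for every $s' \le s-1$, apply the Gordon-type theorem to the (generally non-symmetric) convex body $\widetilde{K}_s \cap F_{s-1}$ inside $F_{s-1} \cong \ell_2^{n_{s-1}}$ to extract a subspace $F_s \subset F_{s-1}$ on which $(\widetilde{K}_s \cap F_s, 0)$ is $\delta$-equivalent to a Euclidean ball, with $\dim F_s \ge \varphi(n_{s-1}, \delta, A)$ for some function $\varphi$ tending to $\infty$ in its first argument. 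Since $\delta$-equivalence to a Euclidean ball is preserved under restriction to a further subspace, all earlier conditions survive. Setting $E(X_2, x_1) := F_k$ (or $F_{k-1}$ if the section at $s=k$ fails to be a pointed convex body) and translating back by $s\delta x_1$ yields clause (2); iterating $\varphi$ at most $k$ times starting from $N$ shows that $\dim E(X_2,x_1) \to \infty$ as $N \to \infty$ with $\delta$, and hence $k$, fixed, giving clause (1).

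The main obstacle I anticipate is that for $s > 0$ the sections $\widetilde{K}_s$ are not centrally symmetric in $X_2$, so the classical symmetric Dvoretzky--Milman theorem is not directly applicable; the non-symmetric form of Gordon's result \cite{Gor88} (via \cite{LM75}) is exactly what makes the inductive step work, and invoking it correctly is where the real work sits. A secondary technical point is ensuring that the quantitative Dvoretzky dimension of each $\widetilde{K}_s$ is large enough to push the iteration through; this is handled by the uniform upper bound on the ratio of circumradius to inradius of $\widetilde{K}_s$ that follows from $\|P_i\| \le A$, producing a single function $\varphi$ that governs all $k+1$ steps simultaneously.
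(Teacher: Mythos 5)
Your proposal is correct and follows essentially the same route as the paper: iteratively apply Gordon's quantitative non-symmetric almost-ellipsoidal-section theorem over the $k=1/\delta$ levels, shrinking the ambient subspace at each step, and use that $\delta$-equivalence to a Euclidean ball is preserved under passing to further subspaces. The one small divergence is your appeal to the bound $\|P_i\|\le A$ to control the circumradius/inradius ratio of the sections $\widetilde{K}_s$: this is not needed, because the dimension function $g(N,\delta)=\delta^2\ln(\sigma N)/\beta$ in Gordon's theorem as cited is universal and does not depend on the aspect ratio of the convex body.
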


\begin{proof} We use \cite[Theorem 7]{Gor88} to construct the subspace $E(X_2,x_1)$ by reducing the space $X_2$ to $E(X_2,x_1)$ in
$k=\frac1{\delta}$ steps. Let step $m$ be such that after this step the
condition of $\delta$-equivalence to Euclidean balls is
satisfied for levels $0,1,\dots,m$ (that is, for subspaces $E(X_2,x_1)+s\delta x_1$ with $s=0,1, \dots, m$).

Observe that the intersection of the ball of $X$ with $X_2$ is a
Euclidean ball, therefore the condition of the item {\bf (2)} for
$m=0$ is satisfied.

After that we start reducing the subspace $X_2$ as follows.

{\bf Step $1$} corresponding to $m=1$. We start with the subspace
$E_0=X_2$ and denote the unit ball of 
 $\lin(E_0\cup\{x_1\})$  in the
$X$-norm by $B_0$. Consider the
intersection of $B_0$ and the affine subspace $\delta x_1+E_0$. Since $\delta<1$, it
is clear that $\delta x_1$ is an interior point of this section 
(recall that $x_1$ is a unit vector in $X_1$).
By \cite[Theorem~7 and Remark~8]{Gor88}, there is a linear subspace $E_1\subset
E_0$ such that the intersection of $B_0$ with $\delta x_1+E_1$ is
$\delta$-equivalent to a Euclidean ball (centered at $\delta
x_1$) and $\dim E_1\ge g(\dim E_0, \delta)$, where  $g$ is given by 
\begin{equation}
\label{E:Gordon}
g(N,\delta)=\delta^2\ln(\sigma N)/\beta
\end{equation}
 for some universal constants $\sigma>0$ and
$0<\beta<\infty$. Step 1 is complete.

Denote by $g^{\{s\}}$ the function obtained as the $s^{\rm th}$
iteration of $g$, that is,
$g^{\{s\}}(N,\delta)=g(g\dots g(g(N,\delta),\delta)\dots,\delta),\delta)$, $s$ times.
\medskip

{\bf Step $m$:} We start with a subspace $E_{m-1}\subset X_2$
whose dimension is at least $g^{\{m-1\}}(N,\delta)$. Denote the unit ball of
$\lin (E_{m-1}\cup\{x_1\})$ in the $X$-norm by $B_{m-1}$.

Note that the intersections of $B_{m-1}$ with the affine subspaces $i\delta
x_1+E_1$ for $i=1,\dots,m-1$ are $\delta$-equivalent to 
Euclidean balls centered at $i\delta x_1$.

Now, consider the intersection of $B_{m-1}$ and
the affine subspace $m\delta x_1+E_{m-1}$. It is clear that
$m\delta x_1$ is an interior point of this section (if
$m<\frac1\delta$).
By \cite[Theorem 7]{Gor88}, there  exists a linear subspace $E_m\subset
E_{m-1}$ such that the intersection of $B_{m-1}$ with $m\delta
x_1+E_m$ is $\delta$-equivalent to the Euclidean ball centered
at $m\delta x_1$ and $\dim E_m\ge g(\dim E_{m-1},\delta)$.

If $x_1$ is an interior point of
 $B_{k-1}\cap(x_1+E_{k-1})$, we stop after
doing Step $k=\frac1\delta$. Otherwise, we stop one step earlier.

We denote the subspace obtained at the end of this procedure by
$E(X_2,x_1)$. It is clear that $\dim E(X_2,x_1)\ge g^{\{k\}}(N,\delta)$.
Since $k$ depends only on $\delta$,  the condition {\bf (1)} of
Lemma  \ref{L:DeltaSect} is satisfied.

It is clear that after this procedure Condition {\bf (2)} is satisfied for all levels, except, 
possibly, level $k$.
\end{proof}

We are going to prove that the established
in Lemma \ref{L:DeltaSect} properties of the ball $B$ 
imply its
$\omega(\delta)$-invariance with respect to orthogonal
operators in $E(X_2,x_1)$, where $\omega(\delta)$ is a function
defined for positive $\delta$ and satisfying
$\lim_{\delta\downarrow0}\omega(\delta)=0$.
\medskip

To do this, we define the function $r$ for $t\in[0,1]$  in the
following manner. Let $B_h(tx_1)$ be the largest Euclidean ball in
the affine subspace $E(X_2,x_1)+tx_1$ centered at $tx_1$ which is
contained in $B$. The  value $r(t)$ is defined 
  to be the radius of
this ball.

Consider the union $C^+:=\displaystyle{\bigcup_{t\in
[0,1]}B_h(tx_1)}$, and let $C^-$ be its image under the central
symmetry about $0$. Since we consider the Euclidean structure in
which $x_1$ is orthogonal to $X_2$ and $B_h(tx_1)$ is a Euclidean
ball centered at $tx_1$, the sets $C^+$ and $C^-$ are reflections of
each other in the subspace $E(X_2,x_1)$. Their union will be denoted by $D$, that is, $D=C^+\cup C^-$. 
The function $r$ is extended as an even function on $[-1,1]$.

\medskip

The following statement holds:

\begin{lemma}\label{L:ConcRad} The function  $r$ is concave and
 continuous on $[-1,1]$, and it is non-increasing on  $[0,1]$.
\end{lemma}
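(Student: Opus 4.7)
The plan is to handle the three claims in turn, with convexity of the unit ball $B$ doing most of the work. I would first prove concavity directly from the convexity of $B$, then derive continuity at the endpoints using an upper semicontinuity argument, and finally deduce monotonicity on $[0,1]$ from concavity combined with the evenness of $r$.

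For concavity, the key observation is that a convex combination of two Euclidean balls centered at $t_1 x_1$ and $t_2 x_1$ (of radii $r(t_1)$ and $r(t_2)$ respectively, each contained in $B$) is itself a Euclidean ball centered at $(\lambda t_1+(1-\lambda)t_2) x_1$ of radius $\lambda r(t_1)+(1-\lambda)r(t_2)$. By convexity of $B$ this convex combination lies in $B$, yielding the concavity inequality $r(\lambda t_1+(1-\lambda)t_2)\ge \lambda r(t_1)+(1-\lambda)r(t_2)$ at once. Concavity then gives continuity on the open interval $(-1,1)$ for free. At the endpoints $\pm 1$ a general concave function can drop, so I would separately verify upper semicontinuity of $r$ from closedness of $B$: if $t_n\to t$ and $r(t_n)\to L$, then passing to the limit in the closed condition ``the Euclidean ball of radius $r(t_n)$ centered at $t_n x_1$ lies in $B$'' shows that the analogous inclusion holds for $t$ with radius $L$, so $r(t)\ge L$. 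Combined with the lower bound $\liminf_{s\to t^{\pm}} r(s)\ge r(t)$ supplied by concavity (obtained by interpolating $r(t)$ against any fixed interior value), this gives continuity at the endpoints.

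Finally, monotonicity on $[0,1]$ follows from evenness of $r$ — built into the construction via the central symmetry of $B$ about the origin, since $B=-B$ forces the optimal radii at $tx_1$ and $-tx_1$ to coincide — together with concavity. For $0\le s<t\le 1$ with $t>0$, writing $s=\frac{t-s}{2t}(-t)+\frac{t+s}{2t}\,t$ and applying concavity gives
\[
r(s)\ \ge\ \tfrac{t-s}{2t}\,r(-t)+\tfrac{t+s}{2t}\,r(t)\ =\ r(t),
\]
as required. The only genuinely nontrivial point I anticipate is continuity at the endpoints, where concavity alone is insufficient and the closedness of $B$ must be brought in; the rest is routine convex geometry.
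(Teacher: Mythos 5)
Your proposal is correct and follows essentially the same route as the paper: concavity from convexity of $B$ via the Minkowski combination of inscribed balls, interior continuity for free from concavity, monotonicity on $[0,1]$ from concavity plus evenness, and endpoint continuity by invoking closedness of $B$. The only cosmetic difference is the ordering — the paper first derives monotonicity and then uses it (together with closedness of $B$) to identify the one-sided limit at $\pm 1$ with $r(\pm 1)$, whereas you establish endpoint continuity directly via the $\limsup$/$\liminf$ sandwich before turning to monotonicity — but the underlying ideas are identical.
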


\begin{proof}

Consider  $-1\le t_1<t_2\le 1$.
By the convexity of $B$, the ball $B_h(t_3x_1)$ with $t_3=\alpha t_1+(1-\alpha)t_2$ (for some $0<\alpha<1$) contains the 
ball at level $t_3x_1$ of radius $\alpha r(t_1)+(1-\alpha)r(t_2)$. Therefore, $r(t_3) \ge \alpha r(t_1)+(1-\alpha)r(t_2)$,   
and  $r$  is concave.

The continuity of $r$ on the interval $(-1,1)$ follows from concavity: 
 A function concave on  an interval is continuous  everywhere except,
possibly, at the endpoints of the interval (see, e.g., the introductory
        chapter in \cite{Gru07}).

Monotonicity on $[0,1]$ follows from the concavity of  $r$
 and the fact that it is even on $[-1,1]$.

The continuity of $r$ at $t=1$ (and therefore at $t=-1$ also) follows since $r$ is a  decreasing function bounded
below by $0$  and therefore has a limit $L$ from the left at $t=1$. This
limit coincides with $r(1)$ because $B$ is closed and its intersection with $E(X_2,x_1)+ tx_1$
contains a ball of radius $L$ centered at $tx_1$ for all
$t\in[0,1)$. 
\end{proof}

Next we prove:
\begin{lemma}\label{L:BinsideIncrD} For some $\omega(\delta)>0$ satisfying
$\lim_{\delta\downarrow 0}\omega(\delta)=0$, the inclusion $B \subset (1+\omega(\delta))
D$ holds and thus, $B$ is $\omega(\delta)$-invariant with respect to orthogonal operators on 
$E(X_2,x_1)$. 
\end{lemma}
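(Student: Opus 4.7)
The plan is to first establish the geometric inclusion $B \subset (1+\omega(\delta))D$ with $\omega(\delta) = O(\delta)$, and then derive the $\omega(\delta)$-invariance as a short consequence of this inclusion together with the rotational symmetry of $D$.

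For the inclusion, fix $p = tx_1 + y \in B$; by central symmetry of $B$ we may assume $t \in [0,1]$. Unwinding the scaling, membership $p \in (1+\omega)D$ reduces to the scalar inequality $\|y\| \le (1+\omega)\,r(t/(1+\omega))$ (with $\|\cdot\|$ the Euclidean norm on $E(X_2,x_1)$). Since $-x_1 \in B$ (because $x_1 \in S(X_1)$ and $B$ is centrally symmetric), the segment joining $-x_1$ to $p$ lies in $B$ by convexity; parametrising as $q(\mu) = (1-\mu)(-x_1)+\mu p$, the choice $\mu_\star = (1+s\delta)/(1+t)$ places $q(\mu_\star)$ at grid level $s\delta$. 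Take $s = \lfloor t/\delta\rfloor$ for $t \le (k-1)\delta$, and $s = k-1$ in the boundary band $t \in ((k-1)\delta,1]$ so as to stay on a level where $\delta$-equivalence is guaranteed by Lemma \ref{L:DeltaSect}(2); in both cases $\mu_\star \in [1-\delta, 1]$. The $\delta$-equivalence at level $s\delta$ gives $\|\mu_\star y\| \le (1+\delta)r(s\delta)$, hence
\[\|y\|\;\le\;(1+\delta)\,\frac{1+t}{1+s\delta}\,r(s\delta)\;\le\;(1+c\delta)\,r(s\delta)\]
for an absolute constant $c$.

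It remains to compare $r(s\delta)$ with $r(t/(1+\omega))$. Using concavity and monotonicity of $r$ on $[0,1]$ from Lemma \ref{L:ConcRad}, two cases handle everything. If $s\delta \ge t/(1+\omega)$, monotonicity gives $r(s\delta) \le r(t/(1+\omega))$ directly. If $s\delta < t/(1+\omega)$, the concavity estimate $r(u) \le r(v)(1-u)/(1-v)$ (valid for $u \le v < 1$, obtained by writing $v$ as a convex combination of $u$ and $1$ and using $r(1)\ge 0$) applied with $u=s\delta$, $v=t/(1+\omega)$ gives $r(s\delta) \le r(t/(1+\omega))(1-s\delta)/(1-t/(1+\omega))$, and the ratio on the right is bounded by $1+O(\delta/\omega)$, which becomes $1+O(1/C)$ with the choice $\omega(\delta) = C\delta$. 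Choosing $C$ a sufficiently large absolute constant absorbs the factor $(1+c\delta)$ into $(1+\omega(\delta))$, yielding $B \subset (1+\omega(\delta))D$ with $\omega(\delta) = C\delta \to 0$ as $\delta \downarrow 0$.

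For the $\omega(\delta)$-invariance, any orthogonal operator $O$ on $E(X_2,x_1)$, extended by identity on $\lin\{x_1\}$, preserves $D$: at each level $tx_1$ the set $D$ contains the Euclidean ball of radius $r(t)$ centered on the $x_1$-axis, and any such ball is rotation-invariant. Combined with $D \subset B$ (immediate from the definition of $r(t)$ as an inscribed radius), for any $\alpha x_1 + y_1 \in B$ we obtain
\[\alpha x_1 + O y_1 \;\in\; (1+\omega)\,O D \;=\; (1+\omega)\,D \;\subset\; (1+\omega)\,B,\]
hence $\|\alpha x_1 + O y_1\| \le (1+\omega)\|\alpha x_1 + y_1\|$; the reverse inequality follows by applying the same estimate with $O^{-1}$. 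The main obstacle is the comparison between $r(s\delta)$ and $r(t/(1+\omega))$ when $r$ drops steeply near $t = 1$, which is ultimately controlled by the concavity inequality above, together with the preemptive detour through grid level $(k-1)\delta$ that sidesteps the possibly non-$\delta$-equivalent level $k\delta = 1$.
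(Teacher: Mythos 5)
Your approach is genuinely different from the paper's: where the paper reduces to a two-dimensional slice through $x_1$ and $x_2$ and carries out a delicate polygonal comparison (Propositions~\ref{P:2Dpoly} and \ref{P:2D}, with separate regimes $y\le 1-\delta^{1/2}$ and $y>1-\delta^{1/2}$, and a special case for a ``sharp top''), you work directly with the function $r$, using the segment from $-x_1$ to $p$ together with concavity and monotonicity of $r$ from Lemma~\ref{L:ConcRad}. This is a cleaner route and avoids the polygon combinatorics. The reduction of $\omega$-invariance to $B\subset(1+\omega)D$ at the end is fine and matches the spirit of the paper's proof.

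However, there is a gap in the constant bookkeeping for the hard case $s\delta<t/(1+\omega)$. Your stated bound on the concavity ratio is
\[
\frac{1-s\delta}{1-t/(1+\omega)}\le 1+O\!\left(\frac{\delta}{\omega}\right),
\]
which with $\omega=C\delta$ becomes $1+O(1/C)$. This does \emph{not} absorb into $(1+C\delta)$: you would need $(1+c\delta)\bigl(1+O(1/C)\bigr)\le 1+C\delta$, but the left side tends to $1+O(1/C)>1$ as $\delta\downarrow 0$ while the right side tends to $1$, so the inequality fails for small $\delta$ no matter how large the absolute constant $C$ is. With the bound as you stated it, the correct scaling is $\omega(\delta)\gtrsim\sqrt{\delta}$ (which is precisely the scaling $\omega(\delta)=4\delta+\delta^{1/2}$ the paper adopts), not $\omega=C\delta$. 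Your argument \emph{can} in fact be made to work with $\omega=C\delta$, but this requires a sharper bound than the one you quote: in the hard case, $s\delta<t/(1+\omega)<(s+1)\delta$ forces $s\omega<1$, hence $s\delta<\delta/\omega=1/C$ and $t<1/C+\delta$; for $C\ge 4$ and $\delta<1/4$ this gives $1-t/(1+\omega)>1/2$, so the ratio is actually $\le 1+2\delta$, which then comfortably absorbs into $1+C\delta$. Either patch (take $\omega\sim\sqrt{\delta}$, or sharpen the denominator bound as above) closes the argument; as written, the jump from $1+O(1/C)$ to $(1+\omega(\delta))$ does not hold.

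One very minor point: when you write $\mu_\star\in[1-\delta,1]$, you also need $s\delta\le t$ to ensure $\mu_\star\le 1$; this is automatic for $s=\lfloor t/\delta\rfloor$ and for $s=k-1$ on $t\in((k-1)\delta,1]$, so no harm done, but it is worth flagging since the argument uses $\|y\|\le(1+\delta)r(s\delta)/\mu_\star$.
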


\begin{figure}
\begin{center}
             
\begin{tikzpicture}[scale=0.4]

\node[above left] at (0,5) {\tiny $P_0$};
 \filldraw[blue] (0,5) circle (.1);
 \node[above] at  (5/3*1.707711, 5.000000) {\tiny $R_0$};
  \filldraw[blue] (5/3*1.707711, 5.000000)  circle (.1);
 \node[right] at  ({5/3*1.2*(1+2*cos(1.57/5*(-5+1) r))},{5-1}) {\tiny $P_1$};
  \filldraw[blue]  ({5/3*1.2*(1+2*cos(1.57/5*(-5+1) r))},{5-1}) circle (.1);
 \node[right] at  (5/3*3.9336862, 0.4781379) {\tiny $R_{k-1}$};
   \filldraw[blue] (5/3*3.9336862, 0.4781379) circle (.1);
 \node[below] at  ({6.4},{0}) {\tiny $P_{k}$};
   \filldraw[blue] ({5/3*1.2*(1+2*cos(1.57/5*(-5+5) r))},{5-5}) circle (.1);

 \node[below left] at  (0,0) {\tiny $0$};   
\filldraw  (0,0)  circle (.03);   
   
\node[below left] at  (5,0) {\tiny $1$};   
\filldraw  (5,0)  circle (.03);   
   
\draw[->] (-7,0) -- (8,0) node[below] {$x$};
\draw[->] (0,-6)--(0,7) node[left] {$y$}; 
\draw[-,thick] (-5/3*1,5) -- (5/3*1,5); 
\draw[-,thick] (-5/3*1,-5) -- (5/3*1,-5); 

\draw[thick,domain=-5:5,smooth,variable=\t]
  plot ({5/3*(1+2*cos(1.57/5*\t r))},{\t});
\draw[thick,domain=-5:5,smooth,variable=\t]
  plot ({5/3*(-1-2*cos(1.57/5*\t r))},{\t});
  
 \draw[dashed, thin,domain=-5:5,smooth,variable=\t]
  plot ({5/3*(1.2*(1+2*cos(1.57/5*\t r)))},{\t});
\draw[dashed, thin,domain=-5:5,smooth,variable=\t]
  plot ({5/3*(1.2*(-1-2*cos(1.57/5*\t r)))},{\t});
 
 \foreach \t in {-5,-4,...,5}
\draw[thin] ({5/3*(1.2*(1+2*cos(1.57/5*\t r)))},{\t}) -- ({5/3*(1.2*(-1-2*cos(1.57/5*\t r)))},{\t});  

 \foreach \t in {-5,-4,...,3}
\draw[very thin, blue] ({5/3*((1+2*cos(1.57/5*\t r)))},{\t}) -- ({5/3*(1.2*(1+2*cos(1.57/5*(\t+1) r)))},{\t+1}) ;  
\foreach \t in {-4,-3,...,4}
\draw[very thin, blue] ({5/3*(1+2*cos(1.57/5*(\t+1) r))},{\t+1}) -- ({5/3*1.2*(1+2*cos(1.57/5*\t r))},{\t}) ;  
  

\draw[very thick, violet] (0,5) -- (5/3*1.707711, 5.000000);    
\draw[very thick, violet] ({5/3*1.2*(1+2*cos(1.57/5*(-5+1) r))},{5-1})  -- (5/3*1.707711, 5.000000) ;    
  
\draw[very thick, violet] ({5/3*1.2*(1+2*cos(1.57/5*(-5+1) r))},{5-1})  -- (5/3*2.608540 ,3.291769 ) ;    
\draw[very thick, violet] ({5/3*1.2*(1+2*cos(1.57/5*(-5+2) r))},{5-2})  -- (5/3*2.608540 ,3.291769 ) ;  

\draw[very thick, violet] ({5/3*1.2*(1+2*cos(1.57/5*(-5+2) r))},{5-2})  -- (5/3*3.231391, 2.374698) ;    
\draw[very thick, violet] ({5/3*1.2*(1+2*cos(1.57/5*(-5+3) r))},{5-3})  -- (5/3*3.231391, 2.374698 ) ;  

\draw[very thick, violet] ({5/3*1.2*(1+2*cos(1.57/5*(-5+3) r))},{5-3})  -- (5/3*3.690784, 1.431570) ;    
\draw[very thick, violet] ({5/3*1.2*(1+2*cos(1.57/5*(-5+4) r))},{5-4})  -- (5/3*3.690784, 1.431570) ;  

\draw[very thick, violet] ({5/3*1.2*(1+2*cos(1.57/5*(-5+4) r))},{5-4})  -- (5/3*3.9336862, 0.4781379) ;    
\draw[very thick, violet] ({5/3*1.2*(1+2*cos(1.57/5*(-5+5) r))},{5-5})  -- (5/3*3.9336862, 0.4781379) ;  

\end{tikzpicture}
\hspace{1cm}
\begin{tikzpicture}[scale=0.4]
 
\node[above left] at (0,5) {\tiny $P_0$};
 \filldraw[blue] (0,5) circle (.1);
 
 \node[above] at  (0.711, 5.459) {\tiny $R_0$};
\filldraw[blue] (0.711, 5.459) circle (.1);

 \node[right] at  ({1.2*5*cos(1.57/5*(5-1) r))},{5-1})  {\tiny $P_1$};
  \filldraw[blue]   ({1.2*5*cos(1.57/5*(5-1) r))},{5-1})  circle (.1);
 \node[right] at  (5/3*3.9336862, 0.4781379) {\tiny $R_{k-1}$};
   \filldraw[blue] (5/3*3.9336862, 0.4781379) circle (.1);
 \node[below] at  (6.4,0) {\tiny $P_{k}$};
   \filldraw[blue] ({5/3*1.2*(1+2*cos(1.57/5*(-5+5) r))},{5-5}) circle (.1);

\node[below left] at  (0,0) {\tiny $0$};   
\filldraw  (0,0)  circle (.03);   
   
\node[below left] at  (5,0) {\tiny $1$};   
\filldraw  (5,0)  circle (.03);   
   
\draw[->] (-7,0) -- (8,0) node[below] {$x$};
\draw[->] (0,-6)--(0,7) node[left] {$y$};

\draw[thick,domain=-5:5,smooth,variable=\t]
  plot ({5*cos(1.57/5*\t r)},{\t});
\draw[thick,domain=-5:5,smooth,variable=\t]
  plot ({-5*cos(1.57/5*\t r)},{\t});
  
 \draw[dashed, thin,domain=-5:5,smooth,variable=\t]
  plot ({1.2*5*cos(1.57/5*\t r))},{\t});
\draw[dashed, thin,domain=-5:5,smooth,variable=\t]
  plot ({-1.2*5*cos(1.57/5*\t r))},{\t});
 
 \foreach \t in {-4,-3,...,4}
\draw[thin] ({1.2*5*cos(1.57/5*\t r))},{\t}) -- ({-1.2*5*cos(1.57/5*\t r))},{\t});  

 \draw[very thin, blue] (0.711, 5.459) -- ( 5, 0) ;  
 \draw[very thin, blue] (1.858, -4) -- ( 5, 0) ;  
 \draw[very thin, blue] (0.711, 5.459) -- ( -1.548114, 4) ;  
 \foreach \t in {-5,-4,...,2}
\draw[very thin, blue] ({5*cos(1.57/5*\t r)},{\t}) -- ({1.2*5*cos(1.57/5*(\t+1) r)},{\t+1}) ;  
\foreach \t in {-3,-2,...,4}
\draw[very thin, blue] ({5*cos(1.57/5*(\t+1) r)},{\t+1}) -- ({1.2*5*cos(1.57/5*\t r)},{\t}) ;  
  
\draw[very thick, violet] (0,5)  -- (0.711, 5.459) ;    
\draw[very thick, violet] ({1.2*5*cos(1.57/5*(5-1) r))},{5-1})  -- (0.711, 5.459) ;    
  
\draw[very thick, violet] ({1.2*5*cos(1.57/5*(5-1) r))},{5-1})  -- (3.456277, 3.135875) ;    
\draw[very thick, violet] ({1.2*5*cos(1.57/5*(5-2) r))},{5-2})  -- (3.456277, 3.135875) ;  

\draw[very thick, violet] ({1.2*5*cos(1.57/5*(5-2) r))},{5-2})  -- (4.885178, 2.314469) ;    
\draw[very thick, violet] ({1.2*5*cos(1.57/5*(5-3) r))},{5-3})  -- (4.885178, 2.314469) ;  

\draw[very thick, violet] ({1.2*5*cos(1.57/5*(5-3) r))},{5-3})  -- (5.992737, 1.405467) ;    
\draw[very thick, violet] ({1.2*5*cos(1.57/5*(5-4) r))},{5-4})  -- (5.992737, 1.405467) ;  

\draw[very thick, violet] ({1.2*5*cos(1.57/5*(5-4) r))},{5-4})  -- (6.5857824, 0.4706148) ;    
\draw[very thick, violet] ({1.2*5*cos(1.57/5*(5-5) r))},{5-5})  -- (6.5857824, 0.4706148) ;  

 \end{tikzpicture}
 \caption{Flat top $r_0>0$ (Left) and Sharp top $r_0=0$ (Right)}
\label{F:AC}
\end{center}
\end{figure}

The proof of Lemma~\ref{L:BinsideIncrD} will be given below following two preparatory propositions.  
These propositions will be applied to two-dimensional sections of $B$ and $D$. 
Results about the set $A$ below will be applied to two-dimensional sections of $D$. 

In $\mathbb{R}^2$ endowed with a Cartesian system of coordinates $(x,y)$ consider a closed convex domain $A$ 
symmetric about the coordinate axes and containing the points $(\pm1, 0)$ and $(0,\pm 1) $ on its boundary. 
The boundary of $A$ is represented by the thick black curves in Figure~\ref{F:AC}.
For some natural number $k\ge 5$ let $\delta=1/k$ and denote by $\lc{ (r_i, 1-i\delta) \ | \ 1 \le i \le k}$ 
the coordinates of the points in the first quadrant at the intersection of the boundary of $A$
 with the horizontal lines $y=1-i\delta$, $1 \le i \le k$. 

For $i=0$,  let $r_0$ be the largest $x$-coordinate among the points at the intersection of $A$ and the line $y=1$. 
Note that $r_0$ may be positive if $A$ has a flat top,   or it may be zero. We will distinguish between the two cases. 

Define a polygonal line with vertices alternating from the set of points $P_i$ with coordinates $((1+\delta)r_i, 1-i\delta)$, 
$1\le i\le k$ and the set of points $R_i$ at the intersection of  pairs of lines through $(r_{i-1}, 1-(i-1)\delta)$, 
$((1+\delta)r_i, 1-i\delta)$ and $((1+\delta)r_{i+1}, 1-(i+1)\delta)$, $(r_{i+2}, 1-(i+2)\delta)$, $1 \le i \le k-2$ (see Figures~\ref{F:AC} and \ref{F:slice}). 
Let $P_0$ be the point of coordinates $(0,1)$. 

We define $R_{k-1}$ as the intersection of the line through $(r_{k-1}, -\delta)$ and $(1+\delta, 0)$ 
with the line through $(r_{k-2}, 2\delta)$ and $((1+\delta)r_{k-1}, \delta)$. 

 According to the cases $r_0>0$ and $r_0=0$, we define: 
 \begin{itemize}
 \item[(I)]
 In the case $r_0>0$, 
  let $R_0$ be the intersection of the line through $((1+\delta)r_1, 1-\delta)$ and 
$(r_{2}, 1-2\delta)$ with the line $y= 1$.

 \item[(II)]
  In the case $r_0=0$,
 let $R_0$ be the intersection of the line through $(-r_1, 1-\delta)$ and $(0,1)$
with the line  through $(1, 0)$ and  $((1+\delta)r_1, 1-\delta)$. 

\noindent
{\bf Comment. } This choice of $R_0$ could look artificial, but it works for our goals and makes the computation easier. 
\end{itemize}

Consider the closed domain $C$ bounded by the polygon obtained from the reflections of the polygonal line 
$P_0, R_0, P_1, R_1, P_2, R_2, \dots, P_{k-1}, R_{k-1}, P_k$
about the coordinate axes and about the origin. 

\begin{proposition}\label{P:2Dpoly} 
For $0<\delta< 1/4$,
   the domain $C$ is contained in 
$(1+\omega(\delta))A$  (the homothetic dilation of  $A$) where $\omega(\delta)= 4\delta + \delta^\frac12$.
\end{proposition}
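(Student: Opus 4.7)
My plan is to show that every vertex of the polygon bounding $C$ lies in $(1+\omega(\delta))A$; since $(1+\omega(\delta))A$ is convex, this forces $C \subset (1+\omega(\delta))A$. By the symmetry of both $C$ and $A$ about the coordinate axes it suffices to verify the inclusion for the vertices in the closed first quadrant, namely $P_0, R_0, P_1, R_1, \ldots, P_{k-1}, R_{k-1}, P_k$. Throughout, I encode the first-quadrant boundary of $A$ by the concave, even, non-increasing function $r \colon [-1,1] \to [0, \infty)$ with $r(1-i\delta) = r_i$ (the function whose properties are exactly those established in Lemma \ref{L:ConcRad}, applied to the two-dimensional set $A$).

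The vertices $P_i$ with $1 \le i \le k$ are immediate: the half-width of $(1+\delta)A$ at height $1-i\delta$ equals $(1+\delta)\, r\bigl((1-i\delta)/(1+\delta)\bigr)$, which is at least $(1+\delta) r_i$ because $(1-i\delta)/(1+\delta) \le 1 - i\delta$ and $r$ is non-increasing on $[0,1]$; hence $P_i \in (1+\delta)A$. The vertex $P_0 = (0,1)$ lies on $\partial A$ by hypothesis. For the interior corners $R_i$ with $1 \le i \le k-2$, I will parameterize the two chord-extension lines in the definition (each running from an on-boundary point $(r_j, 1-j\delta)$ to an outer point $((1+\delta)r_{j'}, 1-j'\delta)$) and solve for their intersection. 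The resulting $x$-coordinate admits a closed-form expression in $r_{i-1}, r_i, r_{i+1}, r_{i+2}$; using concavity in the form $r_{i+1} + r_{i-1} \le 2 r_i$ and $r_{i+2} + r_i \le 2 r_{i+1}$ together with monotonicity of $r$, one checks that $R_i$ lies in $(1+4\delta)A$. This is the source of the $4\delta$ part of $\omega(\delta)$.

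The main obstacle, and the origin of the $\sqrt{\delta}$ correction, is the behavior at the top of $C$. In case (I) with $r_0 > 0$, the polygon has a flat top-segment on the line $y=1$ and $R_0$ sits on this segment; the inclusion $R_0 \in (1+O(\delta))A$ is verified directly from the defining intersection using $r_1$ and $r_2$ together with the concavity inequality for the triple $(r_0, r_1, r_2)$. Case (II) with $r_0 = 0$ is the genuinely delicate one: concavity only forces $r_1 \le 1$, but it permits $r_1$ to be as small as $\Theta(\sqrt{\delta})$ (witness the Euclidean disc, where $r(y)=\sqrt{1-y^2}$ gives $r_1 = \sqrt{2\delta-\delta^2}$). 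In this regime one of the lines defining $R_0$ is nearly vertical, so $R_0$ sits well above $A$ unless we pay a $\sqrt{\delta}$-sized excess. The specific choice of $R_0$ in (II) as the intersection of a line through $(-r_1, 1-\delta)$ and $(0,1)$ with a line through $(1,0)$ and $((1+\delta)r_1, 1-\delta)$ is tuned so that this excess is bounded by $\sqrt{\delta}$: a direct calculation of the intersection coordinates, combined with comparison against the chord of $\partial A$ joining $(0,1)$ and $(1,0)$ (which lies in $A$ by convexity), yields $R_0 \in (1+\sqrt{\delta})A$. The vertex $R_{k-1}$ near the bottom is handled by an analogous but easier estimate, since $r$ is bounded away from $0$ for $y$ near $0$. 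Assembling the vertex-by-vertex inclusions gives $\omega(\delta) = 4\delta + \sqrt{\delta}$, as claimed.
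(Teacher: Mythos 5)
The overall skeleton you propose---reduce the inclusion $C\subset(1+\omega(\delta))A$ to checking it for the finitely many vertices of $C$ and then invoke convexity of the dilated set---matches the paper's strategy (the paper works with the coordinates $Q_{i-1}$, $T_i$ of the horizontal extremities of the polygonal chain, which control $R_{i-1}$). However, there are two genuine gaps in the middle of your plan.

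\textbf{The local concavity check for $R_i$ is not enough.} You assert that, using only the local concavity inequalities $r_{i+1}+r_{i-1}\le 2r_i$, $r_{i+2}+r_i\le 2r_{i+1}$ and monotonicity of $r$, one checks $R_i\in(1+4\delta)A$ for all $1\le i\le k-2$. These inequalities only say that the gaps $L_i := r_i-r_{i-1}$ are non-increasing; they give no uniform upper bound on $(L_i-L_{i+1})/r_{i-1}$ or $(L_{i-1}-L_i)/r_i$, which is precisely the quantity that controls the horizontal overshoot of $R_{i-1}$. Near the top (small $i$), this ratio can be of order $1/i$ rather than $O(\delta)$: concavity permits $L_1,\dots,L_{i-1}$ to be comparable, whence $L_i-L_{i+1}$ can be a constant fraction of $r_{i-1}$. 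The paper handles this by splitting the argument: for large $i$ it uses a \emph{global} summation estimate $(i-1)L_{i-1}\le\sum_{j\le i-1}L_j = r_{i-1}-r_0$ (an averaging/pigeonhole step, not a local concavity inequality), and for small $i$ it abandons level-by-level comparison altogether and exploits the \emph{vertical} component of the dilation (the stretched copy of level $i$ rises above level $i-1$). Your plan mentions neither the global averaging bound nor the vertical-shift mechanism, so as written the key step is not justified, and a purely horizontal comparison based on the listed inequalities will fail for vertices in the upper part of $C$.

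\textbf{The diagnosis of the $\sqrt{\delta}$ is inverted.} You attribute the $\delta^{1/2}$ term in $\omega(\delta)$ to the vertex $R_0$ in case (II), on the grounds that small $r_1$ (as for the Euclidean disc) makes one defining line nearly vertical. In fact the paper's computation of $R_0$ shows the opposite: small $r_1$ gives $\alpha=\frac{(1+\delta)r_1-\delta}{(1-2\delta-\delta^2)r_1+\delta}<1$, so the dilation requirement is trivially satisfied; the only potentially dangerous regime is $r_1$ close to $1$, and even there the paper shows $\omega>4\delta$ already suffices for $R_0$ in both case (I) and case (II). The $\sqrt{\delta}$ term is instead used in the paper for the \emph{middle} vertices with $i\ge\delta^{-1/2}+1$ (the averaging argument needs the extra slack to force the contradiction $(i-1)<\delta^{-1/2}$), i.e., exactly in the portion of your plan that you claim needs only $4\delta$. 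So the proposal both omits the argument for the region that genuinely drives the $\sqrt{\delta}$ correction and assigns the correction to the wrong vertex.
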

\proof
Because both $A$ and $C$ are symmetric about the coordinate axes, we will focus on the parts of
$A$ and $C$ contained in the first quadrant. 

The proof will proceed in two steps: 
 \begin{itemize}
 \item[(i)] We show that the region of $C$ with points $(x,y)$ with $y\le 1-\delta^\frac12$ is contained in the 
 $(1+\omega(\delta))$ horizontal stretch of $A$, i.e. in the set 
 $$\lc{ \lp{(1+\omega(\delta))x,y} \ | \ (x,y)\in A}.$$ 

 \item[(ii)] We show that the points of $C$ above the line $y= 1-\delta^\frac12$ are covered by the full dilation 
 $(1+\omega(\delta))A$. 
\end{itemize}
The proof of the proposition follows from the convexity of $A$ and from items (i) and (ii).

\begin{figure}
\begin{center}
\begin{tikzpicture}[scale=0.8]
\draw[thick] (9,0) -- (11,0); 
 \filldraw[black](11,0) circle (0.08); 
 \node at (9.7,-0.3) {\small $L_{i+1}$};
 \node at (11,-0.25)  {\small $r_{i+1}$};
 \draw[thick](9,0)--(9,3.5);
 
 \draw[thick](6,3.5)--(9,3.5);

\draw[dotted](9,3.5)--(10,3.5);
\filldraw[black](9,3.5) circle (0.08);
\node at (7, 3.2) {\small $L_i$}; 
\node at (8.8,3.25)  {\small $r_i$};
\filldraw[black](9,7) circle (0.08);

\draw[thick] (6,3.5) -- (6,7);
\filldraw[black](6,7) circle(0.08);
\node at (5.65,6.75)  {\small $r_{i-1}$};

\draw[thick] (2.5,7) -- (6,7); 
\draw[dotted] (6,7)--(6.5,7);

\draw[thick] (2.5,7)--(2.5,10.5)--(1,10.5);


\node at (3.7,6.7) {\small $L_{i-1}$};

 \draw[blue,thick](11,0)--(7.57,12);
 \filldraw[black](10,3.5) circle (0.08);
\filldraw[black](6.503,7) circle (0.08); 
\filldraw[black] (10.5,3.5) circle (0.08);


\filldraw[black](9,7) circle (0.08);

\draw[pattern=north east lines, pattern color=gray](6, 3.5)--(10,3.5)--(9.8328,4.0852)--(6.503,7)--(6,7)--(6,3.5);

\draw[pattern=north east lines, pattern color=yellow] (9,7)--(13.08,7)--(11,12)--(7.57,12)--(9,7);
\draw[pattern=horizontal lines, pattern color=yellow] (6.503,7)--(14.5,0)--(16,0)--(13.08,7)--(6.503,7);
\draw[pattern=crosshatch, pattern color=yellow] (9.8328,4.0852)--(10.5,3.5)--(14.54,3.5)--(13.08,7)--(9,7)-- (9.8328,4.0852);
\draw[pattern=horizontal lines, pattern color=yellow] (10,3.5) --(10.5,3.5)--(9.85,4.05);

\node at (15.3,4.8){\small$P_i: r_i(1+\delta)$};
\node at (13,-0.8){\small$T_i: r_i+2\delta r_{i-1}+(L_{i-1}-L_i)$};
\draw[blue,->](12.5,-0.6)--(10.45,3.4);
\node[above] at (5,10.5){\small$P_{i-1}: r_{i-1}(1+\delta)$};
\draw[->](5,10.7)--(6.5,7.1);

\node at (15,9.5){\small$Q_{i-1}: r_{i-1}+2\delta r_i+(L_i-L_{i+1})$};
\draw[blue,->](14.9,9.3)--(9.1,7.1);

\filldraw[black](10,3.5) circle (0.08);
\draw[blue, thick](14.5,0)--(0.79,12);
\filldraw[black](6.503,7) circle (0.08); 
\filldraw[black] (10.5,3.5) circle (0.08);

\draw[very thick, dashed](6.5,7)--(16,7);
\draw[very thick, dashed](10,3.5)--(16,3.5);

\node[right] at (9.8328,4.0852)  {\small $R_{i-1}$};
 \filldraw[black] (9.8328,4.0852) circle (0.08);

\filldraw[black](9,7) circle (0.08);
\filldraw[black](9,3.5) circle (0.08);
\filldraw[black](6.503,7) circle (0.08);
\filldraw[black](10,3.5) circle (0.08);
\filldraw[black] (10.5,3.5) circle (0.08);
 \filldraw[black](11,0) circle (0.08); 
\draw[->](15.3,4.6)--(10.1,3.55);
\filldraw[black](2.5,10.5) circle (0.08);
\node[left, below]  at (2.1,10.5) {\small$r_{i-2} $};

\node[left]  at (2.5,8.75) {\small$\delta$};
\node[left]  at (6,5.25) {\small$\delta$};
\node[left]  at (9,1.75) {\small$\delta$};

\end{tikzpicture} 
\caption{Analysis at the boundary of $C$.}
\label{F:slice}
\end{center}
\end{figure}

Figure \ref{F:slice} displays:
\begin{itemize}

\item Labels $r_i$ which indicate the points of coordinates
$\lp{r_i,1-i\delta}$ on the boundary of $A$.

\item Horizontal intervals of length $L_i:= r_i-r_{i-1}$ ending at the points $r_i$.

\item Labels $(1+\delta)r_i$  which indicate the points of coordinates
$((1+\delta)r_i, 1-i\delta)$ denoted above by $P_i$.

\item  For $1 \le i\le k-1$,   the number $r_{i-1}+2\delta r_i+(L_i-L_{i+1})$ which represents the $x$-coordinate of the point $Q_{i-1}$
at the intersection of the line through
$((1+\delta)r_{i}, 1-i\delta)$, $(r_{i+1}, 1-(i+1)\delta)$ and the line $y=1-(i-1)\delta$. 
We define the point $Q_{k-1}$ as the intersection of the line through the points 
$(1+\delta, 0)$, $(r_{k-1}, -\delta)$ and the line $y=\delta$;
 it has coordinates $(2+2\delta-r_{k-1},\delta)$. 

\item The number $r_{i}+2\delta r_{i-1}+(L_{i-1}-L_{i})$ which represents the $x$-coordinate of the point $T_{i}$
at the intersection of the line through
$(r_{i-2}, 1-(i-2)\delta)$, 
$((1+\delta)r_{i-1}, 1-(i-1)\delta)$ and the line $y=1-i\delta$. 

\item The point $R_{i-1}$ at the intersection of the line through $P_{i-1}$, $T_i$ and the line through 
$Q_{i-1}$, $P_i$. The intersection looks as is shown in Figures~\ref{F:AC} and \ref{F:slice} because of the monotonicity of $\lc{L_i}_{i=1}^k$ observed below.
\end{itemize}
The points $P_{i-1}, R_{i-1}, P_i$ are consecutive vertices of the polygon that bounds the region $C$ described above. 

Define $L_0:=r_0$.
Observe that the convexity of the domain $A$ implies that $L_1\ge L_2\ge\dots\ge L_k$, while
nothing of this type can be claimed about $L_0$.

To prove step (i) we show that for each $\delta^{-1/2}+1\le i \le k-1$  the following inequalities hold:
\begin{equation}\label{E:B_i-1}
(1+\omega(\delta))r_{i-1}\ge
 r_{i-1}+2\delta r_i+(L_i-L_{i+1}),
 \end{equation}
\begin{equation}\label{E:T_i}
(1+\omega(\delta))r_{i}\ge
 r_{i}+2\delta r_{i-1}+(L_{i-1}-L_{i}).
 \end{equation}

Our proof shows \eqref{E:T_i} for $i=k$ also. We prove a suitable version of \eqref{E:B_i-1} below,  see \eqref{E:k-1}. 

Indeed, if \eqref{E:B_i-1} fails, we get
\[(4\delta+\delta^{1/2})r_{i-1}<2\delta r_i+(L_i-L_{i+1}),\]
or
\[\delta^{1/2}r_{i-1}<4\delta\lp{r_{i}/2-r_{i-1}} +(L_i-L_{i+1}) .\]
For $i\ge 2$, the inequality $L_i\le L_{i-1}$ implies $r_i/2\le r_{i-1}$, 
and thus the last inequality implies
\[\delta^{1/2}r_{i-1} <  (L_i-L_{i+1})\le L_i\le L_{i-1}\le\dots\le
L_1.\]

Therefore $(i-1)\delta^{1/2}r_{i-1}< \sum_{j=1}^{i-1}L_j=r_{i-1}-r_0 \le r_{i-1} $, 
implying $(i-1)<\delta^{-1/2}$, contrary to the
assumption.
\medskip

Meanwhile, if  \eqref{E:T_i} fails, we get
\[(4\delta+\delta^{1/2})r_i<2\delta r_{i-1}+(L_{i-1}-L_{i}).
\]

Since $r_{i-1}\le r_{i}$,  
we get
\[(2\delta+\delta^{1/2})r_{i-1} \le (4\delta+\delta^{1/2})r_i<2\delta r_{i-1}+(L_{i-1}-L_{i}).
\]
This inequality implies
\[\delta^{1/2}r_{i-1} < (L_{i-1}-L_i) \le L_{i-1}\le\dots\le
L_1.\]
As in the previous case, this
implies $(i-1)<\delta^{-1/2}$, contrary to the assumption.

Thus, for $\delta^{-1/2}+1 \le  i\le k-1$ the inequalities \eqref{E:B_i-1} and \eqref{E:T_i} hold, and this implies that 
the points $Q_{i-1}$, $T_i$, and consequently $R_{i-1}$, are covered by the horizontal $(1+\omega(\delta))$ stretch of $A$.

Now we show that the point $Q_{k-1}$ is also covered by the horizontal $(1+\omega(\delta))$ stretch of $A$. 

For  any $\delta < 1/4$ it holds that 
\[ \delta^{1/2}(1-4\delta^{3/2}-\delta) \ge 0.\]
This is equivalent to 
\[(2+4\delta+\delta^{1/2})(1-\delta) \ge 2+2\delta. \]
Since $r_k =1 $ and $r_0 \ge 0$, the convexity of $A$ requires that $r_{k-1}\ge 1-\delta$. 
Therefore, 
\[(2+4\delta+\delta^{1/2})r_{k-1} \ge 2+2\delta \]
and thus
\begin{equation}
\label{E:k-1}
(1+\omega(\delta))r_{k-1} \ge 2+2\delta-r_{k-1},
\end{equation}
which is what we needed to show.

Consequently, for each $\delta^{-1/2}+1 \le i\le k $  the corners $R_{i-1}$, $P_i$ 
of the polygon bounding $C$ are covered by the horizontal $(1+\omega(\delta))$ stretch of $A$. 

We now prove step (ii).  We use the fact that multiplication of $A$
by $(1+\omega(\delta))$ stretches $A$ in all directions, not only
 horizontally.

After the horizontal
stretch of $A$, on level $i$ (i.e., on the line $y=1-i\delta$), we obtain an interval 
of length $(1+\omega(\delta))r_i$. It is easy to see that it
is at least as long as the interval of level $(i-1)$ of length
$r_{i-1}+2\delta r_i+(L_i-L_{i+1})$.
Indeed, 
\[ (1+\omega(\delta))r_i > (1+4\delta)r_i = r_{i-1}+4\delta r_i + r_i- r_{i-1} \ge r_{i-1}+2\delta r_i+(L_i-L_{i+1}).\]
 By the convexity of the image of
$A$, it is enough to cover the interval of this length, provided that
the vertical stretch moves level
$i$ on or above the level $(i-1)$, that is, we need the inequality
\begin{equation}
\label{E:L_lift}
(1+\omega(\delta))(1-i\delta)\ge (1-(i-1)\delta)
\end{equation} 
for each $i$ satisfying $(i-1)<\delta^{-1/2}$.

We assumed that $\delta<\frac14$. With this in mind, $1-(i-1)\delta>1-\delta^{-1/2}\delta=1-\delta^{1/2}\ge
1/2$. Since the fraction $\frac {a-\delta}{a}$ is increasing in $a$ for $a>0$, we have
\[ \frac{1-i\delta}{1-(i-1)\delta} = 
\frac{\lp{1-(i-1)\delta}-\delta}{1-(i-1)\delta} \ge \frac{\frac12-\delta}{\frac12}. \]
To prove \eqref{E:L_lift} it suffices to show that
\[(1+\omega(\delta))\frac{\frac12-\delta}{\frac12} \ge 1.\]
Since $\omega(\delta)>4\delta$, the left-hand side in the last
inequality is at least
\[(1+4\delta)\left(1-2\delta\right)=1+2\delta-8\delta^2>1\] 
if $\delta<\frac14$.

Now it remains to consider points of $C$ which are above level $y=1$.
Note that such points can exist only if $r_0=0$. In this case we have $L_1=r_1>0$. 
We show that the point  $R_0$  (sketched in Figure~\ref{F:AC} (Right)) 
at the intersection of the line through $(-r_1, 1-\delta)$, $(0,1)$ and the line through 
 $(1, 0)$, $((1+\delta)r_1, 1-\delta)$, is covered by  $(1+\omega(\delta))A$. 
  For this it suffices to show that the coordinates of $R_0$ satisfy
\begin{equation}
\label{E:xR0}
(1+\omega(\delta))r_1 \ge x_{R_0},
\end{equation}
and 
\begin{equation}
\label{E:yR0}
(1+\omega(\delta))(1-\delta) \ge y_{R_0}.
\end{equation}
Direct calculation gives that 
$x_{R_0}= r_1\alpha$ and $y_{R_0}= \delta \alpha +1$ where $\alpha = \frac{(1+\delta)r_1 - \delta}{(1-2\delta-\delta^2)r_1+\delta}$. 
Thus, the inequality \eqref{E:xR0} is equivalent to $(1+\omega(\delta)) \ge \alpha$ and the inequality \eqref{E:yR0} is equivalent to 
$(1-\delta)\omega(\delta) \ge \delta(1+\alpha)$. 

If \eqref{E:xR0} holds, then necessarily \eqref{E:yR0} is also true. Indeed, since by direct verification 
$ (1-\delta)\omega(\delta) \ge \delta(2+\omega(\delta))$, 
it follows from $(1+\omega(\delta)) \ge \alpha$ that $(1-\delta)\omega(\delta) \ge \delta(1+\alpha)$
which is equivalent to  \eqref{E:yR0}. Therefore, it will suffice to show that  \eqref{E:xR0} holds. 

Now we check that  \eqref{E:xR0} holds. Note that  $(1+\omega(\delta)) \ge \alpha$  is equivalent to 
\begin{equation}
\label{E:nar}
\omega(\delta) \ge \delta\frac{(3+\delta)r_1-2}{(1-2\delta-\delta^2)r_1+\delta},
 \end{equation}
 which leads to the following two cases: 
 
 (I) the case $(3+\delta)r_1\le 2$ when the inequality \eqref{E:nar} holds trivially, and 
 
 (II) the case $(3+\delta)r_1> 2$, which means $r_1 >\frac{2}{3+\delta}$. 
 
 \noindent
 Since in case (II), if we increase $r_1$ to $1$ in the numerator of the fraction on the 
  right hand side and decrease $r_1$  to  $\frac{2}{3+\delta}$  in the denominator we obtain 
  \[ \delta\frac{3+4\delta+\delta^2}{2-\delta-\delta^2} \ge \delta\frac{(3+\delta)r_1-2}{(1-2\delta-\delta^2)r_1+\delta}.\]
  Since $0< \delta< 1/4$, a direct check shows that  $4\delta $ is larger than the left hand side of the inequality above. 
 Thus,  since $\omega(\delta)>4\delta $, the inequality  \eqref{E:nar} holds  in both cases.

Therefore, we obtain that the portion of $C$ above level $1$  
is contained inside the $(1+\omega(\delta))$ homothetic image of $A$.
\endproof

Define a family ${\mathcal I}$ of  $4(k-1)$ closed horizontal intervals, which consists of the intervals with endpoints 
$(r_i, 1-i\delta)$ and $((1+\delta)r_i, 1-i\delta)$, $1 \le i \le k-1$, 
together with their reflection about the coordinate axes and about the origin.

\begin{proposition}\label{P:2D} 
Any closed convex domain $H$ which contains $A$, 
and whose boundary  intersects all the intervals in ${\mathcal I}$ and  passes through the points $(\pm1, 0)$, 
$(0,\pm 1) $, is contained in 
$(1+\omega(\delta))A$  (the homothetic dilation of  $A$) where $\omega(\delta)= 4\delta + \delta^\frac12$.
\end{proposition}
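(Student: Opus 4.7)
The plan is to show $H \subseteq C$, where $C$ is the polygon constructed in Proposition \ref{P:2Dpoly}; combined with the conclusion $C \subseteq (1+\omega(\delta))A$ of that proposition, this yields the desired inclusion $H \subseteq (1+\omega(\delta))A$. By the fourfold symmetry of $A$ and $C$ about the coordinate axes, it suffices to verify the containment $H \subseteq C$ within the first quadrant, where one works with the right-boundary curve of $H$.

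Let $S_i = (s_i, 1-i\delta)$ denote a point of $\partial H \cap \mathcal{I}_i$, so $r_i \le s_i \le (1+\delta)r_i$, and let $g(y)$ be the $x$-coordinate of the rightmost point of $H$ at vertical level $y$. Convexity of $H$ makes $g$ a concave function on its domain, and the inclusion $A \subseteq H$ forces $g(y) \ge r(y)$ where $r$ denotes the corresponding right-boundary function of $A$; in particular, $g(1-j\delta) \ge r_j$ for every $j$, while $g(1-i\delta) = s_i$. The crucial observation is that the tangent-slope constraints on $g$ at each $1-i\delta$ match exactly the slopes of the two edges of $C$ meeting at $P_i$. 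Indeed, the standard concavity inequality $g(1-(i-1)\delta) \le s_i + g'(1-i\delta)\delta$, combined with $g(1-(i-1)\delta) \ge r_{i-1}$ and $s_i \le (1+\delta)r_i$, yields
\[
g'(1-i\delta) \;\ge\; \frac{r_{i-1} - s_i}{\delta} \;\ge\; \frac{r_{i-1} - (1+\delta)r_i}{\delta},
\]
and the right-hand side is precisely the slope of the line through $P_i$ and $(r_{i-1},1-(i-1)\delta)$ that carries the edge $P_iR_i$ of $C$. A symmetric application of concavity at level $1-(i+1)\delta$ produces a matching upper bound on $g'(1-(i+1)\delta)$ equal to the slope of the line through $P_{i+1}$ and $(r_{i+2},1-(i+2)\delta)$ that carries the edge $R_iP_{i+1}$.

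Combining these slope estimates with the monotonicity of $g'$ (a consequence of concavity), one obtains that $g$ is dominated by the right boundary of $C$ throughout the strip $[1-(i+1)\delta,\, 1-i\delta]$: for $y$ above the level of $R_i$ the bound is given by the edge $P_iR_i$, and for $y$ below it, by the edge $R_iP_{i+1}$. Concatenating over all $i$ and handling the boundary strips near $y=1$ and $y=0$ via the analogous reasoning using the special definitions of $R_0$ (in both the $r_0>0$ and $r_0=0$ cases) and $R_{k-1}$ shows $H \subseteq C$ in the first quadrant, hence $H \subseteq C \subseteq (1+\omega(\delta))A$. The main technical obstacle is the careful bookkeeping of slope inequalities in the extreme strips; the conceptual core is the recognition that the lines defining each $R_i$ are exactly the extremal tangent directions compatible with both $A \subseteq H$ and the requirement $\partial H \cap \mathcal{I}_j \ne \emptyset$ at every level.
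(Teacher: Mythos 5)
Your plan---show $H\subseteq C$ and invoke Proposition~\ref{P:2Dpoly}---is exactly the paper's, and your concavity/slope argument for the interior strips $[1-(i+1)\delta,\,1-i\delta]$ correctly supplies the justification the paper leaves implicit in its one-sentence claim that convexity of $H$ yields $H\subseteq C$. Two small points are worth making there: one should use one-sided derivatives or supporting slopes, since the concave function $g$ need not be differentiable at the levels $1-i\delta$; and the identity $g(1-i\delta)=s_i$ itself needs a word, namely that the relative interior of the chord $H\cap\{y=1-i\delta\}$ lies in the interior of $H$, so the unique boundary point with $x\ge r_i$ is the right chord endpoint, and that point lies in $\mathcal{I}_i$.

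The genuine gap is in declaring the extreme strips ``analogous.'' Your interior argument controls $g$ near level $1-i\delta$ using $g(1-(i\pm1)\delta)\ge r_{i\pm1}$, i.e.\ constraints coming from $A\subseteq H$ at adjacent levels. In the sharp-top case $r_0=0$, however, $H$ may protrude above $y=1$, and the edge $R_0P_1$ of $\partial C$ lies on the line $L_2$ through $(1,0)$ and $P_1$---not on the line through $P_1$ and $(r_2,1-2\delta)$ that the interior-type bound produces. The interior-type estimate $g'(1-\delta)\le\big((1+\delta)r_1-r_2\big)/\delta$ gives only $g(1)\le 2(1+\delta)r_1-r_2$, and this can strictly exceed $L_2(1)=\frac{(1+\delta)r_1-\delta}{1-\delta}$: when $A$ is the $\ell_1$ ball one has $r_1=\delta$, $r_2=2\delta$, so $2(1+\delta)r_1-r_2=2\delta^2>\delta^2/(1-\delta)$ for all $\delta<1/2$, i.e.\ the bound is not strong enough to keep $g$ below $\partial C$. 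What rescues the argument is the hypothesis---not yet used by you---that $\partial H$ passes through $(1,0)$, hence $g(0)=1$: extrapolating the chord of the concave $g$ from $y=0$ through $y=1-\delta$ gives precisely $g(y)\le L_2(y)$ for $y\ge 1-\delta$. Likewise the second line through $R_0$, through $(-r_1,1-\delta)$ and $(0,1)$, controls the part of $H$ above $y=1$ via the supporting line of $H$ at the anchor $(0,1)$ (whose slope is pinned down by the chord of $H$ at level $1-\delta$), not via any interval of $\mathcal{I}$. So your closing characterization of the $R_i$-defining lines as ``the extremal tangent directions compatible with $A\subseteq H$ and $\partial H\cap\mathcal{I}_j\ne\emptyset$'' is correct for the interior $R_i$ but not for $R_0$ (or $R_{k-1}$), where the anchor points $(\pm1,0)$, $(0,\pm1)$ are the decisive constraints; indeed this is the only place those two hypotheses enter the proof, and a complete argument must use them explicitly rather than appealing to analogy.
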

\proof 
From the requirement that the domain $H$ is convex it follows that $H$ is contained in the polygonal domain 
$C$ described in Proposition~\ref{P:2Dpoly}. Since $C$  is contained in 
$(1+\omega(\delta))A$, it follows that so is $H$. 
\endproof

\proof[Proof of Lemma~\ref{L:BinsideIncrD}] 
Consider any unit vector $x_2$  in $E(X_2,x_1)$. In the plane spanned by $x_1$ and $x_2$ we consider the 
Cartesian system of coordinates with the $x$-axis in the direction of $x_2$ and the $y$-axis in the direction of $x_1$. 
Define the domains $A$ and $H$ as the respective intersections of $D$ and  the unit ball $B$ with this plane. 
The hypotheses of Proposition~\ref{P:2D} are satisfied and therefore $H$  is contained in 
$(1+\omega(\delta))A$.
Since this holds for any choice of $x_2$, the inclusion $B\subset (1+\omega(\delta))D$ follows. 
Because $D\subset B$ and $D$ is invariant under any orthogonal map on $E(X_2,x_1)$ we conclude that $B$ 
is $\omega(\delta)$-invariant. 
\endproof

Let $G$ be a function, $G:\mathbb{N}\times
(0,\infty)\to\mathbb{N}\cup\{0\}$. We say that $G$ is {\it
indefinitely growing} (IG) if $\lim_{N\to\infty}G(N,\delta)=\infty$ for
every $\delta>0$. 

\begin{observation}\label{O:IterGI}
Finite iterations of {\rm IG} functions with fixed $\delta>0$, that is,
iterations of the form
\[ G(G(\dots G(G(N,\delta),\delta)\dots,\delta),\delta),
\]
are also {\rm IG} functions.
\end{observation}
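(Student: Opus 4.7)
The plan is to prove this by induction on the number of iterations $m$, using only the definition of an IG function: namely, that $\lim_{N\to\infty}G(N,\delta)=\infty$ for each fixed $\delta>0$, which unpacks to the statement that for every $M>0$ there exists $N_0=N_0(M,\delta)$ so that $G(N,\delta)\ge M$ whenever $N\ge N_0$.

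Denote the $m$-fold iteration by $G^{\{m\}}(N,\delta)$. The base case $m=1$ is the hypothesis that $G$ itself is IG. For the inductive step, assume $G^{\{m\}}$ is IG; I want to show that $G^{\{m+1\}}(N,\delta)=G(G^{\{m\}}(N,\delta),\delta)$ is IG. Fix $\delta>0$ and an arbitrary target $M>0$. First, apply the IG property of $G$ at the pair $(M,\delta)$ to obtain a threshold $K_0$ such that $G(K,\delta)\ge M$ for every $K\ge K_0$. Next, apply the inductive hypothesis that $G^{\{m\}}$ is IG at the pair $(K_0,\delta)$ to obtain $N_0$ such that $G^{\{m\}}(N,\delta)\ge K_0$ for every $N\ge N_0$. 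Combining these two steps, for every $N\ge N_0$ one has $G^{\{m+1\}}(N,\delta)=G(G^{\{m\}}(N,\delta),\delta)\ge M$, as needed.

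Since $\delta>0$ and $M>0$ were arbitrary, this shows $\lim_{N\to\infty}G^{\{m+1\}}(N,\delta)=\infty$ for every $\delta>0$, closing the induction. There is essentially no obstacle here: the only subtle point to flag is that the argument relies on the composition being well-defined, i.e., on the output of $G^{\{m\}}(N,\delta)$ being a legitimate first coordinate for $G$. This is guaranteed for all sufficiently large $N$ by the IG property itself (eventually the output exceeds any fixed positive integer), so the iterated expression makes sense for all $N$ past some initial threshold, which is harmless for statements about the limit as $N\to\infty$.
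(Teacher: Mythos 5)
Your proof is correct and is the natural induction argument one would give here; the paper states this as an Observation without proof, so there is no alternative argument to compare against. Your closing remark about well-definedness of the composition (since $G$ maps into $\mathbb{N}\cup\{0\}$ but takes its first argument from $\mathbb{N}$) is a legitimate point and is resolved exactly as you say.
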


\begin{proposition}\label{P:InTermsGI} There exists an {\rm IG} function $G(=G(N,\alpha))$
such that, for each subspace $U$ of $X_2$, each $\alpha>0$, and each
$v\in S(X_1)$, there exists a subspace $E(U,v)\subset U$ with $\dim
E(U,v)\ge G(\dim U,\alpha)$ and such that the $X$-norm on
$\lin(E(U,v)\cup\{v\})$ is $\alpha$-invariant with respect to the
orthogonal operators on $E(U,v)$.
\end{proposition}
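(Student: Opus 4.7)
The plan is to assemble Proposition~\ref{P:InTermsGI} from the tools already developed: Lemma~\ref{L:DeltaSect} provides a subspace on which two-dimensional sections are $\delta$-close to Euclidean balls, Lemma~\ref{L:BinsideIncrD} converts this geometric closeness into the desired $\omega(\delta)$-invariance, and Observation~\ref{O:IterGI} keeps track of the dimension counting.

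First, given $\alpha>0$, I would pick $\delta=\delta(\alpha)\in(0,1/4)$ with $1/\delta\in\mathbb{N}$ small enough that the function $\omega(\delta)=4\delta+\delta^{1/2}$ from Proposition~\ref{P:2Dpoly} satisfies $\omega(\delta)\le\alpha$; such a $\delta$ exists because $\omega(\delta)\downarrow 0$ as $\delta\downarrow 0$. Next, I would observe that although Lemma~\ref{L:DeltaSect} is formally stated with $X_2$ in place of $U$, the only property of $X_2$ used in its proof is that $X_2$ is a Euclidean subspace of $X$ on which the $X$-norm restricts to the Euclidean norm. Since any subspace $U\subset X_2$ inherits these properties, the construction of Lemma~\ref{L:DeltaSect} applies verbatim with $U$ in place of $X_2$ and with $v$ in place of $x_1$, yielding a subspace $E(U,v)\subset U$ of dimension at least $g^{\{k\}}(\dim U,\delta)$, where $k=1/\delta$ and $g$ is the Gordon function from \eqref{E:Gordon}.

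Then I would apply Lemma~\ref{L:BinsideIncrD} to the unit ball of $\lin(E(U,v)\cup\{v\})$ in the $X$-norm. The $\delta$-equivalence of the relevant affine sections to Euclidean balls (guaranteed by condition \textbf{(2)} of Lemma~\ref{L:DeltaSect}) is exactly the hypothesis of Lemma~\ref{L:BinsideIncrD}, so its conclusion gives $\omega(\delta)$-invariance of this unit ball with respect to orthogonal operators on $E(U,v)$. By the choice of $\delta(\alpha)$, this invariance is $\alpha$-invariance, which is precisely the required property of the $X$-norm on $\lin(E(U,v)\cup\{v\})$.

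Finally, I would define
\[
G(N,\alpha):=g^{\{1/\delta(\alpha)\}}(N,\delta(\alpha)).
\]
Gordon's function $g(N,\delta)=\delta^2\ln(\sigma N)/\beta$ is IG in $N$ for each fixed $\delta>0$, and by Observation~\ref{O:IterGI} a finite iteration of an IG function with fixed second argument is again IG; hence $G$ is IG and satisfies $\dim E(U,v)\ge G(\dim U,\alpha)$. I do not expect a serious obstacle: the only point requiring care is confirming that the iterative Gordon construction of Lemma~\ref{L:DeltaSect}, and the subsequent two-dimensional slicing argument behind Lemma~\ref{L:BinsideIncrD}, are insensitive to whether the ambient Euclidean space is $X_2$ or one of its subspaces, which follows directly from inspection of the proofs.
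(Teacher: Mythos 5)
Your proof is correct and follows essentially the same route as the paper: choose $\delta=1/k$ with $\omega(\delta)\le\alpha$, apply Lemma~\ref{L:DeltaSect} and Lemma~\ref{L:BinsideIncrD} with $U$ and $v$ in the roles of $X_2$ and $x_1$, and set $G(N,\alpha)=g^{\{k\}}(N,\delta)$, invoking Observation~\ref{O:IterGI}. You are in fact slightly more explicit than the paper on two points it leaves implicit (why Lemma~\ref{L:DeltaSect} applies verbatim when $X_2$ is replaced by a subspace $U$, and the constraint $\delta<1/4$ inherited from Proposition~\ref{P:2Dpoly}); the paper, for its part, pins down a canonical $\delta(\alpha)$ via the condition $\omega(1/k)<\alpha\le\omega(1/(k-1))$ so that $G$ is unambiguously a function of $\alpha$, a minor bookkeeping step you could add to make the definition of $G$ fully determinate.
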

\proof
For the given $\alpha> 0$ select $\delta > 0$ so that $k:= \frac{1}{\delta} \in \mathbb{N}$ and $\omega(\delta)< \alpha$. 
Applying Lemmas \ref{L:DeltaSect} and \ref{L:BinsideIncrD}  for this value of $\delta$, $X_2=U$ and $x_1=v$, we obtain the subspace 
$E(U,v)\subset U$ with dimension bounded below by $g^{\lc{k}}(\dim U,\delta)$, the $k^{\rm th}$ iteration of the  function \eqref{E:Gordon},
such that   the $X$-norm on
$\lin(E(U,v)\cup\{v\})$ is $\alpha$-invariant with respect to the
orthogonal operators on $E(U,v)$.

Since we may assume that $\delta =\frac1k$ satisfies both $\omega(\delta)< \alpha$ and $\omega\lp{\frac1{k-1}} \ge \alpha$ we can regard 
$g^{\lc{k}}(\dim U,\delta)$ as $G(\dim U, \alpha)$ for some IG function $G$.
\endproof

Combining Observation~\ref{O:IterGI} and
Proposition~\ref{P:InTermsGI}  with the known fact that the
cardinality of an $\alpha$-net in the unit sphere of a
$t$-dimensional normed space can be estimated from above in terms
of $t$ and $\alpha>0$ only, we arrive at the following statements.

\begin{enumerate}[{\bf (A)}]

\item For every  $\alpha>0$ and $n, M\in \mathbb{N}$, there exists $N\in
\mathbb{N}$ such that if we apply Proposition
\ref{P:InTermsGI} to $X_1$ with $\dim X_1=N$ and all points $x_2$
in an $\alpha$-net $N_2$ of the unit sphere of an $M$-dimensional
subspace $U\subset X_2$, we obtain a subspace $Y_1$ in $X_1$ of
dimension at least $n$, such that
\[(1-\alpha)\|y_1+y_2\|\le \|O_1y_1+y_2\|\le (1+\alpha)\|y_1+y_2\|\]
for every $y_1\in Y_1$,
$y_2$ being any scalar multiple of an element of $x_2\in N_2$, and any orthogonal operator $O_1$ on $Y_1$.

\item For every $\alpha>0$ and $n\in \mathbb{N}$, there exists $M\in
\mathbb{N}$ such that applying Proposition
\ref{P:InTermsGI} to a subspace $U\subset X_2$ of dimension $\dim
U\ge M$ and all points $x_1$ in an $\alpha$-net $N_1$ of the unit
sphere of an $n$-dimensional subspace $Y_1\subset X_1$, brings out a
subspace $Y_2\subset U\subset X_2$ with $\dim Y_2\ge n$ such
that
\[(1-\alpha)\|y_1+y_2\|\le \|y_1+O_2y_2\|\le (1+\alpha)\|y_1+y_2\|\] for every $y_1$ 
being a scalar multiple of an element $x_1\in N_1$,
any  $y_2\in Y_2$, and any orthogonal operator $O_2$ on $Y_2$.

\end{enumerate}

We use items {\bf (A)} and {\bf (B)} as follows. First we use $n$
to find values of $M$ and later $N$. Thereafter, we pick any $N$-dimensional subspaces $X_1$ and $X_2$
satisfying the conditions of Theorem \ref{T:AUC}.

After that we apply item {\bf (A)} to an arbitrarily chosen
$M$-dimensional subspace $U\subset X_2$, and get a subspace
$Y_1\subset X_1$.

Finally, with the help of item {\bf (B)}, for the chosen $U$ and $Y_1$
constructed in the previous step we obtain  $Y_2$.

To conclude the proof of Theorem \ref{T:AUC} we need the following approximation
lemma for each step of the construction. Let
\[A=\max\{\|P_1\|,\|P_2\|\},\]
where $P_1:X_1\oplus X_2\to X_1$ and  $P_2:X_1\oplus X_2\to X_2$
are projections with kernels $X_2$ and $X_1$, respectively, and
the norm is the $X$-norm.

\begin{lemma}\label{L:NetToEvery} The conditions of items {\bf (A)} and {\bf (B)} imply that, for any $y_1\in Y_1$, $y_2\in Y_2$, and any orthogonal operators $O_1$ on $Y_1$ and $O_2$ on $Y_2$, we have

\begin{equation}\label{E:Approx}\begin{split}
(1-\alpha(1+(2-\alpha)A))^2\|y_1+y_2\|&\le
\|O_1y_1+O_2y_2\|\\&\le (1+\alpha(1+(2+\alpha)A))^2\|y_1+y_2\|,
\end{split}
\end{equation}
provided $(1-\alpha(1+(2-\alpha)A))>0$.
\end{lemma}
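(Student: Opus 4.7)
The plan is to obtain the symmetric estimate in two asymmetric stages: first use item \textbf{(B)} to swap $y_2$ for $O_2 y_2$ (holding $y_1$ fixed), then use item \textbf{(A)} to swap $y_1$ for $O_1 y_1$ (holding $O_2 y_2$ fixed). Each stage will contribute a factor $(1-\alpha(1+(2-\alpha)A))$ on the lower side and $(1+\alpha(1+(2+\alpha)A))$ on the upper side, and composition produces the advertised squares.

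For the first stage, choose $x_1\in N_1$ with $\|y_1/\|y_1\|_2 - x_1\|_2\le \alpha$ and set $\tilde y_1:=\|y_1\|_2\,x_1$, so that $\tilde y_1$ is a scalar multiple of a net element. Since the $X$-norm restricted to $X_1$ is the Euclidean norm, $\|y_1-\tilde y_1\|_X\le \alpha\|y_1\|_X$, and the projection bound $\|y_1\|_X=\|P_1(y_1+y_2)\|_X\le A\|y_1+y_2\|_X$ upgrades this to $\|y_1-\tilde y_1\|_X\le \alpha A\|y_1+y_2\|_X$. Item \textbf{(B)} now applies to $\tilde y_1$, $y_2$, and $O_2$, giving
\[
(1-\alpha)\|\tilde y_1+y_2\|\;\le\;\|\tilde y_1+O_2 y_2\|\;\le\;(1+\alpha)\|\tilde y_1+y_2\|.
\]
Combining this with two passes of the triangle inequality—one to replace $\tilde y_1$ by $y_1$ inside the expression containing $O_2 y_2$, and one to replace $\tilde y_1+y_2$ by $y_1+y_2$ on the other side—yields, after routine expansion,
\[
(1-\alpha(1+(2-\alpha)A))\|y_1+y_2\|\;\le\;\|y_1+O_2 y_2\|\;\le\;(1+\alpha(1+(2+\alpha)A))\|y_1+y_2\|.
\]

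For the second stage, set $v:=O_2 y_2$ and note $\|v\|_2=\|y_2\|_2$ because $O_2$ is orthogonal. Choose $x_2\in N_2$ within Euclidean distance $\alpha$ of $v/\|v\|_2$, set $\tilde v:=\|v\|_2\,x_2$, and repeat the identical argument using item \textbf{(A)} in place of item \textbf{(B)} to swap $y_1$ for $O_1 y_1$ against $\tilde v$; the projection bound $\|v\|_X\le A\|y_1+v\|_X$ plays the same role as before. The outcome is the analogous two-sided estimate relating $\|O_1 y_1+v\|$ to $\|y_1+v\|$. Multiplying the two stage-estimates—legitimate because the hypothesis $(1-\alpha(1+(2-\alpha)A))>0$ keeps the composed lower bound positive—gives \eqref{E:Approx}. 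Conceptually the proof is a standard net-approximation; the only bookkeeping care is to always route errors of the form $\alpha\|y_i\|_X$ through the projection bound $A\|y_1+y_2\|_X$, so the accumulated error is uniformly expressed in terms of $\|y_1+y_2\|$ rather than in terms of the individual summands.
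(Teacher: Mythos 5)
Your proof is correct and follows essentially the same two-stage net-approximation strategy as the paper: approximate the ``fixed'' side by a multiple of a net element, invoke item \textbf{(A)} or \textbf{(B)}, route the $\alpha\|y_i\|$ error through the projection bound $\le A\|y_1+y_2\|$, then compose. The only difference is cosmetic --- you apply \textbf{(B)} first (swapping $y_2$ for $O_2y_2$) and \textbf{(A)} second, whereas the paper applies \textbf{(A)} first (swapping $y_1$ for $O_1y_1$) and \textbf{(B)} second; by the $Y_1\leftrightarrow Y_2$ symmetry of the setup this is immaterial.
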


\begin{proof} We may assume that $y_1\neq 0$ and $y_2\neq 0$.
Let $z_2$ be a multiple of an element from $N_2$ such that $\|z_2-y_2\|<\alpha\|y_2\|$. Then
\begin{equation}\label{E:above}\begin{split}\|O_1y_1+y_2\|&\le \|O_1y_1+z_2\|+\|z_2-y_2\|
\\&\le(1+\alpha)\|y_1+z_2\|+\|z_2-y_2\|
\\&\le(1+\alpha)\|y_1+y_2\|+(2+\alpha)\|z_2-y_2\|
\\&<(1+\alpha)\|y_1+y_2\|+(2+\alpha)\alpha\|y_2\|
\\&\le(1+\alpha(1+(2+\alpha)A))\|y_1+y_2\|.
\end{split}\end{equation}
Similarly (assume that $\alpha<1$),
\begin{equation}\label{E:below}\begin{split}\|O_1y_1+y_2\|&\ge \|O_1y_1+z_2\|-\|z_2-y_2\|
\\&\ge(1-\alpha)\|y_1+z_2\|-\|z_2-y_2\|
\\&\ge(1-\alpha)\|y_1+y_2\|-(2-\alpha)\|z_2-y_2\|
\\&>(1-\alpha)\|y_1+y_2\|-(2-\alpha)\alpha\|y_2\|
\\&\ge(1-\alpha(1+(2-\alpha)A))\|y_1+y_2\|.
\end{split}\end{equation}

Now we apply the same argument for any $w_1\in Y_1$ and any $z_1$ in the direction of an element of $N_1$
 satisfying $\|w_1-z_1\|<\alpha\|w_1\|$.
We get
\begin{equation}\label{E:SecondPart}\begin{split}
(1-\alpha(1+(2-\alpha)A))\|w_1+y_2\|&\le \|w_1+O_2y_2\|\\&\le (1+\alpha(1+(2+\alpha)A))\|w_1+y_2\|.
\end{split}
\end{equation}

Plugging $w_1=O_1y_1$ and using \eqref{E:above}, \eqref{E:below}, and $(1-\alpha(1+(2-\alpha)A))>0$, we get \eqref{E:Approx}.\end{proof}

To complete the proof of Theorem \ref{T:AUC} we pick $\alpha>0$ in such a way that
$(1+\alpha(1+(2+\alpha)A))^2<1+\ep$
and $(1-\alpha(1+(2-\alpha)A))^2>1-\ep$. \end{proof}

\section{Bending in unconditional sums of two
spaces}\label{S:Bend}

Let $X$ and $Y$ be (possibly finite-dimensional) Banach spaces
such that there exist two linear isometric embeddings $I_1:Y\to X$
and $I_2:Y\to X$ with distinct images $Y_1=I_1(Y)$ and
$Y_2=I_2(Y)$.

\begin{definition}\label{D:bend} Let $C\in[1,\infty)$. A mapping
$T:Y\to X$ is called a {\it $C$-bending} of $Y$ in the space $X$
from $I_1$ to $I_2$, with parameters $(r,R)$, $0<r<R<\infty$, if
it is a $C$-bilipschitz embedding  such that the restriction of
$T$ to the ball of radius $r$ coincides with $I_1$ and the
restriction of $T$ to the exterior of the ball of radius $R$ in
$Y$ coincides with $I_2$.
\end{definition}

Let $Z=(\mathbb{R}^2,\|\cdot\|_Z)$ be a two-dimensional Banach
space in which the unit vectors $(1,0)$ and $(0,1)$ form a
normalized $1$-unconditional basis. This means
\begin{equation}\label{E:NormUC}\|(1,0)\|_Z=\|(0,1)\|_Z=1 \hbox{ and }\|(a,b)\|_Z=\|(\pm a,\pm b)\|_Z.\end{equation}

Given a Banach space $Y$, we use $X= Y\oplus_Z Y$ to denote the
Banach space consisting of pairs $(u,v)$ with $u,v\in Y$ with the
norm
\[\|(u,v)\|_X=\|(\|u\|_Y,\|v\|_Y)\|_Z.\]

When we consider a $C$-bending of $Y$ in the space $X=Y\oplus_Z Y$
we restrict our attention to the case where $I_1(y)=(y,0)$ and
$I_2(y)=(0,y)$ and call such bending {\it a $C$-bending of $Y$ in
the space  $X=Y\oplus_Z Y$ with parameters $(r,R)$,
$0<r<R<\infty$.}
\medskip

To state the main result of this section,  Theorem~\ref{T:BendDetail}, we need to
introduce some additional parameters. Define
\[m_Z=\min_\tau \|(\cos \tau,\sin \tau)\|_Z\hbox{ and }
M_Z=\max_\tau \|(\cos \tau,\sin \tau)\|_Z.\]

\begin{observation}\label{O:mM}
It is easy to see that the unit ball of $Z$ satisfying
\eqref{E:NormUC} contains the unit ball of $\ell_1^2$ and is
contained in the unit ball of $\ell_\infty^2$, thereby
$m_Z\ge\frac1{\sqrt{2}}$ and $M_Z\le\sqrt{2}$.
\end{observation}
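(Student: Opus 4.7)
The plan is to establish the two set inclusions $B_{\ell_1^2}\subseteq B_Z\subseteq B_{\ell_\infty^2}$ and then read off the numerical bounds on $m_Z$ and $M_Z$ from elementary trigonometry. These inclusions translate into the pointwise norm inequalities $\|\cdot\|_\infty\le\|\cdot\|_Z\le\|\cdot\|_1$ on $\mathbb{R}^2$ (recall that a larger unit ball corresponds to a smaller norm), so once they are in place, applying them to the Euclidean unit vector $(\cos\tau,\sin\tau)$ gives $m_Z\ge\min_\tau\|(\cos\tau,\sin\tau)\|_\infty=1/\sqrt{2}$ and $M_Z\le\max_\tau\|(\cos\tau,\sin\tau)\|_1=\sqrt{2}$.

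For the first inclusion I would simply apply the triangle inequality using the normalization $\|(1,0)\|_Z=\|(0,1)\|_Z=1$ from \eqref{E:NormUC}: any $(a,b)$ with $|a|+|b|\le 1$ satisfies $\|(a,b)\|_Z\le |a|\,\|(1,0)\|_Z+|b|\,\|(0,1)\|_Z=|a|+|b|\le 1$. For the second inclusion the key ingredient is $1$-unconditionality. Given $(a,b)$ with $\|(a,b)\|_Z\le 1$, unconditionality says both $(|a|,|b|)$ and $(|a|,-|b|)$ lie in the unit ball, so by convexity their midpoint $(|a|,0)$ lies in the unit ball as well. Since $\|(|a|,0)\|_Z=|a|$, this forces $|a|\le 1$, and the symmetric argument forces $|b|\le 1$.

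There is no serious obstacle in this argument; the only points worth tracking are the direction reversal when translating between ball inclusions and norm comparisons, and the observation that $1$-unconditionality (rather than merely central symmetry of the unit ball) is what allows one to pass from $(a,b)$ to $(|a|,-|b|)$ in the second inclusion.
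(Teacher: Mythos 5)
Your proof is correct and is the standard argument one would supply; the paper itself gives no proof beyond ``it is easy to see,'' so there is nothing to contrast. You correctly note that the inclusion $B_{\ell_1^2}\subseteq B_Z$ needs only the triangle inequality and the normalization from \eqref{E:NormUC}, while the inclusion $B_Z\subseteq B_{\ell_\infty^2}$ genuinely uses $1$-unconditionality (via averaging $(|a|,|b|)$ and $(|a|,-|b|)$), and the translation to the norm inequalities $\|\cdot\|_\infty\le\|\cdot\|_Z\le\|\cdot\|_1$ and then to the numerical bounds $m_Z\ge 1/\sqrt2$, $M_Z\le\sqrt2$ is handled with the direction of the ball-versus-norm comparison kept straight.
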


Let
\begin{equation}
    \label{E:u}
u(\tau)= \frac{(\cos\tau, \sin \tau)}{\nm{(\cos\tau, \sin
\tau)}_Z}\in Z.
\end{equation}
Condition \eqref{E:NormUC} implies that $u(0) = (1,0)$ and
$u(\pi/2) = (0,1)$. We need the following

\begin{proposition}
\label{P:diff} The set of all quotients
\[\frac{\nm{u(\tau_2)-u(\tau_1)}_Z}{\tau_2-\tau_1}\]
for $0\le \tau_1<\tau_2\le \frac{\pi}2$ is bounded. Let
\begin{equation}\label{E:c_Z}
    c_Z:=\sup_{0\le \tau_1<\tau_2\le \frac{\pi}2}\frac{\nm{u(\tau_2)-u(\tau_1)}_Z}{\tau_2-\tau_1}.
\end{equation}
Then
\begin{equation}\label{E:Estc_Z}
\frac{2}{\pi} \le \frac{2\sqrt{2} m_Z}{\pi} \le c_Z\le\frac{2M_Z}{m_Z}\le 4.
    \end{equation}
\end{proposition}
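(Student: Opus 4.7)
The plan is to prove the upper bound $c_Z \le 2M_Z/m_Z$ (which also establishes boundedness of the quotient set), and separately to prove the lower bound by testing a specific pair of angles. The numerical inequalities then follow directly from Observation~\ref{O:mM}.

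For the upper bound, I would set $v(\tau) = (\cos\tau,\sin\tau)$ and $n(\tau) = \|v(\tau)\|_Z$, so that $u(\tau) = v(\tau)/n(\tau)$. The first step is the algebraic identity
\[
u(\tau_2) - u(\tau_1) = \frac{v(\tau_2) - v(\tau_1)}{n(\tau_2)} + v(\tau_1)\cdot\frac{n(\tau_1) - n(\tau_2)}{n(\tau_1)\, n(\tau_2)}.
\]
Taking $\|\cdot\|_Z$, using $\|v(\tau_1)\|_Z = n(\tau_1)$, and applying the reverse triangle inequality $|n(\tau_1)-n(\tau_2)| \le \|v(\tau_2)-v(\tau_1)\|_Z$, I obtain
\[
\|u(\tau_2) - u(\tau_1)\|_Z \le \frac{2\,\|v(\tau_2) - v(\tau_1)\|_Z}{n(\tau_2)} \le \frac{2\,\|v(\tau_2) - v(\tau_1)\|_Z}{m_Z}.
\]
The second step is to bound $\|v(\tau_2) - v(\tau_1)\|_Z$ by $M_Z(\tau_2-\tau_1)$. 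Using the standard trigonometric identity,
\[
v(\tau_2) - v(\tau_1) = 2\sin\tfrac{\tau_2-\tau_1}{2}\cdot\bigl(-\sin\tfrac{\tau_1+\tau_2}{2},\, \cos\tfrac{\tau_1+\tau_2}{2}\bigr),
\]
and the point $(-\sin\alpha,\cos\alpha)$ lies on the Euclidean unit circle; by $1$-unconditionality its $Z$-norm equals $\|(\sin\alpha,\cos\alpha)\|_Z \le M_Z$. Combined with $2\sin\tfrac{\tau_2-\tau_1}{2}\le \tau_2-\tau_1$, this gives $\|v(\tau_2)-v(\tau_1)\|_Z \le M_Z(\tau_2-\tau_1)$, and hence $c_Z \le 2M_Z/m_Z$. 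The bound $2M_Z/m_Z \le 4$ then follows from $M_Z\le\sqrt{2}$ and $m_Z\ge 1/\sqrt{2}$ in Observation~\ref{O:mM}.

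For the lower bound, I would simply take $\tau_1 = 0$ and $\tau_2 = \pi/2$, so that $u(\tau_1) = (1,0)$ and $u(\tau_2) = (0,1)$, giving $u(\tau_2) - u(\tau_1) = (-1,1)$. By $1$-unconditionality, $\|(-1,1)\|_Z = \|(1,1)\|_Z = \sqrt{2}\,\|(\cos(\pi/4),\sin(\pi/4))\|_Z \ge \sqrt{2}\,m_Z$. Dividing by $\pi/2$ yields $c_Z \ge 2\sqrt{2}\,m_Z/\pi$, and using $m_Z\ge 1/\sqrt{2}$ produces the bound $c_Z \ge 2/\pi$.

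I do not expect a serious obstacle here; the whole argument is a careful manipulation of quotients plus a half-angle trigonometric identity. The one place where care is needed is the reverse triangle inequality step together with keeping track of the constant $2$ in the numerator, which is what ultimately forces the factor of $2$ appearing in $2M_Z/m_Z$ rather than just $M_Z/m_Z$.
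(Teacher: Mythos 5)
Your proposal is correct and follows essentially the same route as the paper: the same quotient-rule decomposition of $u(\tau_2)-u(\tau_1)$, the same reverse-triangle-inequality bound giving the factor $2/m_Z$, the same half-angle identity yielding $M_Z(\tau_2-\tau_1)$, and the same test pair $\tau_1=0$, $\tau_2=\pi/2$ for the lower bound. No gaps.
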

The last inequality follows from Observation \ref{O:mM}. Since in
this paper we do not need tight estimates for $c_Z$, we do not
dwell on their evaluation.

\proof To begin with, we write:
\[\begin{aligned}
u(\tau_2)-u(\tau_1) = &
\frac{(\cos\tau_2 -\cos\tau_1, \sin \tau_2 - \sin \tau_1)}{\nm{(\cos\tau_2, \sin \tau_2)}_Z} \\
 & -(\cos\tau_1, \sin \tau_1)
 \frac{\nm{(\cos\tau_2, \sin \tau_2)}_Z -
\nm{(\cos\tau_1, \sin \tau_1)}_Z}{ \nm{(\cos\tau_1, \sin
\tau_1)}_Z\nm{(\cos\tau_2, \sin \tau_2)}_Z}.
\end{aligned}\]

Applying the triangle inequality to the numerator of the norm of
the second term in the right-hand side, we conclude that the norm
of the second term does not exceed the norm of the first term.
Therefore,
\[ \nm{u(\tau_2)-u(\tau_1)}_Z \le
\frac {2}{m_Z} \nm{(\cos\tau_2 -\cos\tau_1, \sin \tau_2 - \sin
\tau_1)}_Z.\] Trigonometric identities imply the following vector
version of a spherical Mean Value Theorem
\[(\cos\tau_2 -\cos\tau_1,
\sin \tau_2 - \sin \tau_1) = \lp{-\sin \frac{\tau_1+\tau_2}{2},
\cos \frac{\tau_1+\tau_2}{2}} 2\sin \frac{\tau_2-\tau_1}{2}. \]
Therefore
\[\nm{(\cos\tau_2 -\cos\tau_1,
\sin \tau_2 - \sin \tau_1)}_Z \le M_Z (\tau_2-\tau_1), \] and the
inequality $\displaystyle{c_Z\le \frac{2M_Z}{m_Z}}$ follows.

To get the  bound from below on $c_Z$ in \eqref{E:Estc_Z} we substitute 
$\tau_1=0$ and $\tau_2=\frac{\pi}2$  in the quotient in \eqref{E:c_Z}.
\endproof

The main result of this section is the following theorem.

\begin{theorem}\label{T:BendDetail} Let $Y$ be a finite-dimensional Banach space, and let $Z$ be a $2$-dimensional space satisfying \eqref{E:NormUC}. Then for every $\ep>0$ and every pair $(r,R)$ of positive numbers satisfying the condition
\begin{equation}\label{E:CondR}
\frac{\ep}{c_Z}\,\ln\left(\frac Rr\right)=\frac{\pi}{2},
\end{equation}
there is a
$\displaystyle{\left(\frac{1+\ep}{1-\ep}\right)}$-bending $T$ of
$Y$ into the sum $X=Y\oplus_Z Y$ with parameters $(r,R)$.
Furthermore, the bending $T$ satisfies
\begin{equation}\label{E:PresNorm} \|Tx\|_X=\|x\|_Y \ \ \ \mbox{ for all } x\in Y,
\end{equation}
and
\begin{equation}\label{E:ExactForm}
(1-\ep)  \|x-y\|_Y\le \|Tx-Ty\|_{X}\le (1+\ep)\|x-y\|_Y
 \ \ \ \mbox{ for all } x, y \in Y.
\end{equation}
\end{theorem}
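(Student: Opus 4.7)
The plan is to write $T$ explicitly as a radial interpolation along the unit arc $u(\tau)$ in $Z$ defined in \eqref{E:u}, which already runs from $u(0)=(1,0)$ to $u(\pi/2)=(0,1)$. Define the angle function $\tau:[0,\infty)\to[0,\pi/2]$ by $\tau(\rho)=0$ for $\rho\le r$, by $\tau(\rho)=\pi/2$ for $\rho\ge R$, and by
\[
\tau(\rho)=\frac{\ep}{c_Z}\ln\!\Big(\tfrac{\rho}{r}\Big)\qquad\text{for }\rho\in[r,R],
\]
which is continuous precisely because of the calibration \eqref{E:CondR}. Then set
\[
T(x)=\bigl(u_1(\tau(\|x\|_Y))\,x,\;u_2(\tau(\|x\|_Y))\,x\bigr)\in X=Y\oplus_Z Y.
\]
The boundary values of $\tau$ force $T(x)=(x,0)$ on the ball of radius $r$ and $T(x)=(0,x)$ outside the ball of radius $R$. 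The norm-preservation property \eqref{E:PresNorm} is immediate from $1$-unconditionality of the basis of $Z$ and the nonnegativity of $u_1,u_2$ on $[0,\pi/2]$:
\[
\|T(x)\|_X=\|(u_1(\tau)\|x\|_Y,\,u_2(\tau)\|x\|_Y)\|_Z=\|x\|_Y\cdot\|u(\tau)\|_Z=\|x\|_Y.
\]

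Next I would establish the two-sided estimate \eqref{E:ExactForm}, from which the distortion bound $(1+\ep)/(1-\ep)$ follows at once. Given $x,y\in Y$, label them so that $\rho_y:=\|y\|_Y\le\|x\|_Y=:\rho_x$, and set $\tau_x=\tau(\rho_x)$, $\tau_y=\tau(\rho_y)$. The key identity is the decomposition
\[
Tx-Ty=\bigl(u_1(\tau_x)(x-y),\,u_2(\tau_x)(x-y)\bigr)+\bigl((u_1(\tau_x)-u_1(\tau_y))y,\,(u_2(\tau_x)-u_2(\tau_y))y\bigr).
\]
The first summand has $X$-norm exactly $\|x-y\|_Y$ by the same computation as for \eqref{E:PresNorm}; the second summand has $X$-norm at most $\|y\|_Y\cdot\|u(\tau_x)-u(\tau_y)\|_Z\le \rho_y\,c_Z\,|\tau_x-\tau_y|$ by the definition \eqref{E:c_Z} of $c_Z$. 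Hence everything reduces to the radial estimate
\[
\min(\rho_x,\rho_y)\cdot|\tau(\rho_x)-\tau(\rho_y)|\le \frac{\ep}{c_Z}\,|\rho_x-\rho_y|,
\]
which, together with the reverse triangle inequality $|\rho_x-\rho_y|\le\|x-y\|_Y$, bounds the second summand by $\ep\|x-y\|_Y$ and yields both halves of \eqref{E:ExactForm}.

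The final step is to verify this radial estimate by case analysis on the positions of $\rho_y\le\rho_x$ relative to $r$ and $R$. The central case $r\le\rho_y\le\rho_x\le R$ is just $\rho_y\ln(\rho_x/\rho_y)\le \rho_x-\rho_y$, which is $\ln(1+t)\le t$ applied to $t=(\rho_x-\rho_y)/\rho_y$. The three mixed cases, namely $\rho_y<r\le\rho_x\le R$, $r\le\rho_y\le R<\rho_x$, and $\rho_y<r$ with $\rho_x>R$, reduce to the same inequality evaluated at the endpoints of $[r,R]$, using that $\tau$ is constant outside $[r,R]$; for instance, in the last case $\rho_y\ln(R/r)\le r\ln(R/r)\le R-r\le \rho_x-\rho_y$. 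The two remaining cases (both $\rho$'s $\le r$ or both $\ge R$) are trivial since $\tau_x=\tau_y$. The main obstacle, and the source of the logarithmic profile, is choosing the decomposition with the \emph{smaller}-norm point factored out: this is exactly what lets the prefactor $\rho_y=\min(\rho_x,\rho_y)$ absorb the logarithmic blowup of $\tau'(\rho)=\ep/(c_Z\rho)$ near $\rho=r$ uniformly across all five configurations.
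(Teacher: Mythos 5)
Your proof is correct and follows essentially the same path as the paper's: the same logarithmic angle function $\tau$, the same map $T(x)=(u_1(\tau(\|x\|))x,u_2(\tau(\|x\|))x)$, the same decomposition of $Tx-Ty$ into a ``pure difference'' term of norm exactly $\|x-y\|_Y$ plus a ``rotation'' term estimated via $c_Z$, and the same reduction to a radial inequality. The only cosmetic difference is that you handle the regions $\|x\|\le r$ and $\|x\|\ge R$ by explicit case analysis of the radial estimate, whereas the paper compresses all cases into one line using $\min\{R,\|x\|\}$ and $\max\{r,\|y\|\}$ together with the Mean Value Theorem bound $\tau(t_2)-\tau(t_1)\le\frac{\ep}{c_Z}\frac{t_2-t_1}{t_1}$; these are equivalent bookkeeping choices.
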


\begin{remark}\label{R:OnR}
Any $C$-bending with parameters $(r,R)$ is also a $C$-bending with
parameters $(r_1,R_1)$ if $0<r_1\le r<R\le R_1<\infty$. For this
reason, the exact value of $c_Z$ is not important.
\end{remark}

\begin{proof}[Proof of Theorem \ref{T:BendDetail}] We follow the construction in \cite[Section 2.2]{OO19}.\medskip

Let $\ep\in(0,1), r>0$ be any numbers. For real numbers $t\ge r$,
define the function
\begin{equation}
    \label{E:tau}
    \tau(t)= \tau_{\ve,r,Z}(t):=\frac{\ep}{c_Z}\ln \lp{\frac tr},
\end{equation}
where $c_Z$ is defined in Proposition~\ref{P:diff}. The function
$\tau(t)$ is increasing and, by \eqref{E:CondR}, maps the interval
$[r, R]$ onto $[0,\pi/2]$. The Mean Value Theorem implies that
\begin{equation}
    \label{E:tauDiff}
\tau(t_2)-\tau(t_1) \le \frac{\ep}{c_Z} \frac{t_2-t_1}{t_1}
\end{equation}
for $r \le t_1 \le t_2 \le R$. \remove{Extend $\tau$ to
$[0,\infty)$ by letting $\tau(t)=0$ for $0\le t\le r$ and
$\tau(t)=\pi/2$ for $t \ge R$.}

We introduce the functions
\begin{equation}\label{E:c}
c(x)=\begin{cases} 1&\hbox{ if }\|x\|\le r\\
\frac{\cos \tau(\nm{x})}{\|\lp{\cos \tau(\nm{x}), \sin
\tau(\nm{x})}\|_Z}
&\hbox{ if } r\le \|x\|\le R\\
0 &\hbox{ if }\|x\|\ge R,\\
\end{cases}
\end{equation}
and
\begin{equation}\label{E:s}
s(x)=\begin{cases} 0 &\hbox{ if }\|x\|\le r\\
\frac{\sin \tau(\nm{x})}{\|\lp{\cos \tau(\nm{x}), \sin
\tau(\nm{x})}\|_Z}
&\hbox{ if } r\le \|x\|\le R\\
1 &\hbox{ if }\|x\|\ge R.\\
\end{cases}
\end{equation}
It is clear that \begin{equation}\label{E:Normaliz}
    \|(c(x),s(x))\|_Z=1
\end{equation}
and
\begin{equation}
    \label{E:U}
(c(x),s(x))= u\lp{\tau(\nm{x}_Y)}
\end{equation}
for every $x\in Y$ with $r\le \|x\|_Y\le R$.

We claim that the desired bending is the map
\[ T:Y\to X=Y\oplus_Z Y\]
given by
\begin{equation}\label{E:DefBend} Tx=(c(x)x,s(x)x).\end{equation}

\begin{remark}
It is worth mentioning that $T$ is a development of the well-known in geometry logarithmic spirals in the plane, see \cite[p.~4]{BS07}.
\end{remark}
\smallskip
Equation \eqref{E:Normaliz}  implies that $\|Tx\|_X=\|x\|_Y$ for
every $x\in Y$.
  It is also clear that $T$ satisfies the  condition \eqref{E:ExactForm} whenever $x$, $y$ are both in  the ball of
radius $r$ or in the exterior of the ball of radius $R$.

When estimating $ \nm{Tx-Ty}_X$, from now on we assume without
loss of generality that
\begin{equation}
\label{E:ord}
    \nm{x}_Y \ge \nm{y}_Y.
\end{equation}

Next, we write
\[
 Tx-Ty=   (c(x)x,s(x)x)-(c(y)y,s(y)y)\]
in the form
\begin{equation}\label{E:*} \begin{split}
 Tx-Ty = & (c(x)(x-y),s(x)(x-y)) \\
 & + \lp{ (c(x)-c(y))y,(s(x)-s(y))y}.
 \end{split}\end{equation}
For the first summand in the right-hand side of \eqref{E:*}, we
have:
\[\nm{(c(x)(x-y),s(x)(x-y)) }_X
=\|(c(x)\|x-y\|_Y, s(x)\|x-y\|_Y)\|_Z.\]
We conclude that
\begin{equation}\label{E:**}\nm{(c(x)(x-y),s(x)(x-y)) }_X
=\|x-y\|_Y\|(c(x),s(x))\|_Z=\|x-y\|_Y.
\end{equation}
For the second summand in
the right-hand side of \eqref{E:*}, there holds
\begin{equation}\label{E:***}\nm{ \lp{(c(x)-c(y))y,(s(x)-s(y))y}}_X =
\nm{y}_Y \nm{ (\av{c(x)-c(y)},\av{s(x)-s(y)})}_Z .  
\end{equation}

For $x\in Y$, set
\[U(x) := (c(x),s(x)) \in Z. \]
According to \eqref{E:c}, \eqref{E:s}, and \eqref{E:U}, we have
\begin{equation}
    \label{E:obs}
    U(x) = \begin{cases} u(\tau(r)) &\hbox{ if } \|x\|_Y\le r,\\
    u(\tau(\nm{x}_Y)) &\hbox{ if }r\le \|x\|_Y\le R,\\
    u(\tau(R)) &\hbox{ if }\|x\|_Y\ge R.\end{cases}
\end{equation}

Combining the definition of $U$ with \eqref{E:*}, \eqref{E:**},
and \eqref{E:***},   for any $x,y \in Y$ we obtain
\begin{equation}
\label{E:Dest}
 \begin{split}
 &  \|x-y\|_Y - \nm{y}_Y \nm{ U(x)-U(y)}_Z
\le \nm{ Tx-Ty}_X \\
& \le \|x-y\|_Y +\nm{y}_Y \nm{ U(x)-U(y)}_Z.
 \end{split}
 \end{equation}

Now, we  show that for any $x,y \in Y$ satisfying \eqref{E:ord} we have
\begin{equation}
\label{E:bound_norm} \nm{y}_Y \nm{U(x)-U(y)}_Z\le \ve\nm{x-y}_Y.
\end{equation}
This inequality, together with \eqref{E:Dest} immediately implies
\eqref{E:ExactForm}, and, thus, concludes the proof of the
theorem.

We prove a stronger version of  \eqref{E:bound_norm}, namely,  
\begin{equation}
\label{E:str}
\nm{U(x)-U(y)}_Z \le \ve
\frac{\nm{x}_Y-\nm{y}_Y}{\nm{y}_Y}.
 \end{equation}

It is clear that combining \eqref{E:str} with the triangle inequality we obtain \eqref{E:bound_norm}.
On the other hand, 
\[ \begin{aligned}
& \nm{U(x)-U(y)}_Z   \stackrel{\eqref{E:obs}}{=} 
\nm{u \lp{\tau(\min\lc{R,\nm{x}_Y})}- 
u\lp{\tau(\max\lc{ r,\nm{y}_Y})}}\\
& 
 \stackrel{\eqref{E:c_Z}}{\le} 
 c_Z\lp{\tau(\min\lc{R,\nm{x}_Y})- 
\tau(\max\lc{ r,\nm{y}_Y})} 
 \stackrel{\eqref{E:tauDiff}}{\le} 
 \ve
  \frac{\min\lc{R,\nm{x}_Y}- \max\lc{ r,\nm{y}_Y}}{\max\lc{ r,\nm{y}_Y}}\\
  & \le \ve
\frac{\nm{x}_Y-\nm{y}_Y}{\nm{y}_Y}.
  \end{aligned}
  \]
This
completes the proof of Theorem \ref{T:BendDetail}. \end{proof}

\section{Construction of the embedding}\label{S:ConstrEmb}

\begin{proof}[Proof of Theorem \ref{T:Main}]
Let $X$ be an infinite-dimensional Banach space, $\mathcal{M}$
be a locally finite subset in $\ell_2$, and $\ep\in(0,1)$. We
assume that $0\in \mathcal{M}$. Our goal is to find an embedding
of $\mathcal{M}$ into $X$ with distortion $\le 1+\ep$.\medskip

To achieve the distortion $(1+\ep)$ we need to introduce
additional parameters $\gamma,\psi,\zeta\in (0,1)$, and
$d\in\mathbb{N}$, such that the maximal quotient  of the
right-hand sides and respective left-hand sides in  \eqref{E:Case1}, \eqref{E:Case2},
\eqref{E:EpEstFin}  does
not exceed $(1+\ep)$. Such values exist because the values of all
coefficients go to $1$ as $\gamma,\psi,\zeta\downarrow 0$ and
$d\to\infty$. Also,    we introduce a decreasing sequence
$\{\gamma_i\}_{i=1}^\infty$, $\gamma_i>0$, such that
$\prod_{i=1}^\infty (1+\gamma_i)<1+\gamma.$
\medskip

Next, we define recursively an increasing sequence
$\{R_i\}_{i=1}^\infty$ of positive numbers as follows:

\begin{enumerate}[{\rm (i)}]

\item \quad $R_1=1$.

\item \label{I:2} \quad $\displaystyle{\frac{\psi}4\,
\ln\frac{R_{2i}}{R_{2i-1}}=\frac\pi2}$ for all $i\in \mathbb{N}$.

{\bf Note:} We use the number $4$ in this formula because it is
our upper estimate for $c_Z$ which works for every $Z$, see
\eqref{E:Estc_Z}.

\item \label{I:3} \quad
$\displaystyle{\frac{R_{2i+1}}{R_{2i}}=\frac d\ep}$ for all $i\in \mathbb{N}$.

\end{enumerate}

Let $\mb(R)\subset\mM$ denote the ball of radius $R$ centered at
$0$, while $F_i$ denotes the subspace of $\ell_2$ spanned by
$\mb(R_{4i})$ and $n_i=\dim F_i$.

To prove Theorem \ref{T:Main} we need the following lemma about
FDDs (finite-dimensional Schauder decompositions) in an arbitrary
infinite-dimen\-sio\-nal Banach space $X$. See \cite[p.~11]{JL01}
or \cite[Section 1.g]{LT77} for a basic information on FDDs.

\begin{lemma}\label{L:3rdFDD} Let $\{\gamma_i\}_{i=1}^\infty$ and $\zeta$ be the numbers chosen at the beginning of Section \ref{S:ConstrEmb},
and $\{F_i\}_{i=1}^\infty$ be the subspaces chosen above.
Introduce the sequence $\{\mu_i\}_{i=1}^\infty$ by $\mu_1=\gamma_1$,
$\mu_{2i}=\mu_{2i+1}=\gamma_{i+1}$. 

There exists an infinite-dimensional subspace $V\subset X$ having
an FDD $\{V_i\}_{i=1}^\infty$ for which there exist isomorphisms
$J_i:F_{j(i)}\to V_i$, such that
\[
j(i)=\begin{cases} (i+1)/2 &\hbox{ if $i$ is odd;}\\
i/2 &\hbox{ if $i$ is even,}
\end{cases}
\]
(that is, for each $j\in\mathbb{N}$ there are two isomorphisms
$J_i$ with domain $F_j$) with the following properties

\begin{enumerate}[{\bf (a)}]
    \item $\forall v\in F_{j(i)}\quad \|v\|_2\le \|J_iv\|_X\le (1+\mu_i)\|v\|_2$.

    \item There exist $1$-unconditional norms $\nm{\ }_{Z_i}$ on $\reo^2$ such that the maps 
    $\mj_{2i-1,2i} : F_{j(2i-1)} \oplus_{Z_i} F_{j(2i)}\to V_{2i-1}\oplus V_{2i}$ 
    given by
    $\mj_{2i-1,2i}(u,v)=(J_{2i-1}u,J_{2i}v)$ satisfy   
    \[\|J_{2i-1}u+J_{2i}v\|_X\le \|(\|u\|_2,\|v\|_2)\|_{Z_i}\le
    (1+\zeta)(1+\gamma_i)^2\|J_{2i-1}u+J_{2i}v\|_X
    \]

     \item The maps $\mj_{2i,2i+1}:F_{j(2i)}\oplus_2 F_{j(2i+1)}\to V_{2i}\oplus V_{2i+1}$ given by
    $\mj_{2i,2i+1}(u,v)=(J_{2i}u,J_{2i+1}v)$ satisfy   
    \[\|J_{2i}u+J_{2i+1}v\|_X\le (\|u\|^2_2+\|v\|^2_2)^{1/2}\le
    (1+\gamma_{i+1})\|J_{2i}u+J_{2i+1}v\|_X.
    \]
\end{enumerate}

\end{lemma}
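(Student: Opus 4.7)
The plan is to construct the FDD $\{V_i\}$ inside an infinite-dimensional subspace $W\subset X$ using three ingredients: Mazur's basic sequence theorem, Dvoretzky's theorem applied inside basic-sequence blocks, and Theorem~\ref{T:AUC} for the $1$-unconditional pairs. By Mazur's theorem, first pick an infinite-dimensional $W\subset X$ admitting a Schauder basis of basis constant at most $1+\beta$, where $\beta>0$ is chosen small enough to be absorbed into the slacks in (b) and (c). Partition the basis into consecutive finite blocks whose spans are $W_0,W_1,W_2,\dots$, and inside each $W_k$ use Dvoretzky's theorem to extract a $(1+\gamma'_k)$-Euclidean ``reservoir'' $L_k\subset W_k$ of dimension $M_k$, where $M_k$ and $\gamma'_k$ will be chosen below. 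The $L_k$'s lie in disjoint basis blocks, so they automatically form an FDD with direct-sum projection norms at most $1+\beta$.

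The placement rule is $V_1\subset L_0$, and for each $k\ge 1$ both $V_{2k}$ and $V_{2k+1}$ are placed inside the \emph{same} reservoir $L_k$, Euclidean-orthogonally to one another. The subspaces are constructed iteratively: at stage $k\ge 1$ I would apply Theorem~\ref{T:AUC} to $L_{k-1}^{\ast}\oplus L_k$, where $L_{k-1}^{\ast}$ is the Euclidean-orthogonal complement inside $L_{k-1}$ of the already-placed $V_{2k-2}$ (taking $L_0^{\ast}=L_0$ for $k=1$). The theorem returns $n_k$-dimensional subspaces $V_{2k-1}\subset L_{k-1}^{\ast}$ and $V_{2k}\subset L_k$ whose induced $X$-norm on $V_{2k-1}\oplus V_{2k}$ is $\omega$-invariant for a pre-chosen small $\omega$. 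Lemmas~\ref{L:InvNorm} and \ref{L:InvToUncond} then convert this $\omega$-invariance into the (suitably rescaled) $1$-unconditional two-dimensional norm $\|\cdot\|_{Z_k}$ witnessing (b); the mild mismatch between the ``exactly isometric to $\ell_2^N$'' hypothesis of Theorem~\ref{T:AUC} and the $(1+\gamma')$-Euclidean reservoirs is handled by a standard renorming/perturbation argument, and the direct-sum projection bound $1+\beta$ supplies the constant $A$ of Theorem~\ref{T:AUC}.

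Properties (a) and (c) would then come almost for free from the placement rule. For (a), since $V_i\subset L_m$ for the appropriate $m$ and $L_m$ is $(1+\gamma'_m)$-Euclidean in $X$, one picks $J_i$ as a Euclidean isometry $F_{j(i)}\to V_i$ relative to a Euclidean structure on $L_m$ scaled so that $\|\cdot\|_{\mathrm{Euclid}}\le \|\cdot\|_X$ pointwise on $L_m$; this yields $\|v\|_2\le \|J_iv\|_X\le (1+\gamma'_m)\|v\|_2$, giving (a) once $\gamma'_m\le \mu_i$. For (c), both $V_{2k}$ and $V_{2k+1}$ live Euclidean-orthogonally inside the $(1+\gamma'_k)$-Euclidean reservoir $L_k$, so the direct sum $V_{2k}\oplus V_{2k+1}\subset L_k$ inherits the Euclidean structure of $L_k$ up to the distortion $1+\gamma'_k$, delivering (c) once $\gamma'_k\le \gamma_{k+1}$. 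The global FDD structure on $\{V_i\}$ follows by composing the basis-block FDD on $\{L_k\}$ with the orthogonal splitting $V_{2k}\oplus V_{2k+1}$ inside each reservoir, so the partial-sum projections are uniformly bounded by $(1+\beta)(1+\gamma'_k)^2$.

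The main obstacle is the dimension bookkeeping. Each reservoir $L_k$ must be large enough to support two sequential applications of Theorem~\ref{T:AUC}: the stage-$k$ call carves $V_{2k}$ of dimension $n_k$ out of $L_k$, while the stage-$(k+1)$ call carves $V_{2k+1}$ of dimension $n_{k+1}$ out of the Euclidean-orthogonal complement $L_k^{\ast}$ of $V_{2k}$ in $L_k$. Hence $M_k$ must exceed both $N(n_k,\omega,A)$ and $n_k+N(n_{k+1},\omega,A)$, where $N(\cdot,\cdot,\cdot)$ is the function furnished by Theorem~\ref{T:AUC}, and in turn $\dim W_k$ must be large enough for Dvoretzky to produce a $(1+\gamma'_k)$-Euclidean subspace of dimension $M_k$. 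All these constraints are finite at every stage, so the recursion closes and the construction assembles, with the small parameters $\beta,\gamma'_k,\omega$ absorbed into the prescribed slacks $\mu_i$, $(1+\zeta)(1+\gamma_i)^2$, and $(1+\gamma_{i+1})$ by a routine bookkeeping.
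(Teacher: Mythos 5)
Your overall architecture — Mazur-type selection of well-separated Euclidean reservoirs via Dvoretzky, placing each pair $V_{2k}, V_{2k+1}$ orthogonally inside one reservoir and using Theorem~\ref{T:AUC} across the consecutive-reservoir pair $V_{2k-1}\oplus V_{2k}$ — is the same scheme the paper uses, and items (a) and (c) indeed come out essentially as you describe. The gap is in your claim that the ``direct-sum projection bound $1+\beta$ supplies the constant $A$ of Theorem~\ref{T:AUC}.'' The projection $L_{k-1}^{\ast}\oplus L_k\to L_{k-1}^{\ast}$ with kernel $L_k$ is a restriction of a partial-sum projection of the basic sequence, so it does have norm $\le 1+\beta$; but the complementary projection onto $L_k$, being $I$ minus a partial-sum projection, is only controlled by $1+(1+\beta)=2+\beta$, and a basic sequence with basis constant close to $1$ gives you no improvement on the tail projections. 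This matters because the ``standard renorming/perturbation'' you invoke to make $L_{k-1}^{\ast}$ and $L_k$ exactly Euclidean (the analogue of Lemma~\ref{L:Ren_2i-1_2i}) incurs a multiplicative distortion on the order of the worse projection norm. So the best you can extract for item (b) is $(1+\zeta)(2+\beta)(1+\gamma'_k)$, i.e.\ a factor bounded away from $1$ by roughly $2$, rather than $(1+\zeta)(1+\gamma_i)^2$ with $\gamma_i$ small. That is fatal for the application: plugging such a bound into~\eqref{E:Case1} makes $\Phi_{2i-1}$ have distortion $\ge 2$, destroying the $(1+\ep)$ conclusion of Theorem~\ref{T:Main}.

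The missing idea is a \emph{second} application of the norming-functional device, at the level of the pair $V_{2k-1}\oplus V_{2k}$ rather than only at the level of the reservoirs. In the paper's proof, once $W_{2i}$ is fixed inside the block $U_{i+1}$, one takes a finite $\gamma_i/(1+\gamma_i)$-net in $S(W_{2i})$ together with supporting functionals $w^{\ast}_w$ for each net point, and then chooses $W_{2i-1}$ inside the intersection of the kernels of these functionals (intersected with the Euclidean complement $G_{2i-1}$ inside $U_i$). Lemma~\ref{L:ProjKer} then forces the complementary projection $W_{2i-1}\oplus W_{2i}\to W_{2i}$ to have norm $\le 1+\gamma_i$, matching the bound already available for the other direction from the FDD structure of $\{U_i\}$. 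This is what drives the renorming distortion in Lemma~\ref{L:Ren_2i-1_2i} down to $(1+\gamma_i)^2$ and hence gives (b) in the form needed. Your construction never constrains $V_{2k-1}$ by any norming functionals for $V_{2k}$, which is exactly where the factor $\approx 2$ creeps in.
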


To proceed without interruption, we demonstrate how
Lemma
\ref{L:3rdFDD} is applied to derive Theorem \ref{T:Main}, while
its proof is postponed to the end of the section.

At this point, a low-distortion embedding 
 $\Phi:\mM\to X$   will be
constructed as a piecewise defined map.

Let $R_0=0$. For any two nonnegative integers $j,k$ $(j<k)$,
consider the annulus
\[\ma_{j,k}=\{m\in\mM:~ R_j\le d(m,0)\le R_{k}\}.\]
Obviously when $j=0$ it is a ball. 
Observe that
$\{F_i\}_{i=1}^\infty$ forms an increasing sequence of subspaces of
$\ell_2$. Consequently there exist natural isometric embeddings of
$F_i$ into $F_{i+1}$.

First, we define a sequence of embeddings of annuli
$\ma_{2i,2i+3}$ into sums of the form $F_i\oplus_{Z_i} F_i$ and
$F_i\oplus_2 F_{i+1}$ as restrictions of bendings according to the
following procedure:

\begin{itemize}
    \item Define $\mt_1:\ma_{0,3}\to F_1\oplus_{Z_1}F_1$ as the restriction 
    to $\ma_{0,3}$ of the existing by Theorem \ref{T:BendDetail} $\left(\frac{1+\psi}{1-\psi}\right)$-bending
    of $F_1$ into $F_1\oplus_{Z_1}F_1$ with parameters $(R_1,R_2)$.

     \item Consider the  restriction  to $\ma_{2,5}$ of the existing by Theorem \ref{T:BendDetail} 
     $\left(\frac{1+\psi}{1-\psi}\right)$-bending of $F_2$ into $F_2\oplus_2 F_2$
     with parameters $(R_3,R_4)$. 
    Observe that because $\ma_{2,4}$ is a subset of $F_1$,
     the formula \eqref{E:DefBend} for bending implies that the image of this map is contained in $F_1\oplus_2 F_2$.
     Define $\mt_2:\ma_{2,5}\to F_1\oplus_2 F_2$ as the
     resulting map.
     
     \item \dots

     \item  Define $\mt_{2i-1}:\ma_{4i-4,4i-1}\to F_i\oplus_{Z_i}F_i$ as the restriction to $\ma_{4i-4,4i-1}$ of the existing by Theorem \ref{T:BendDetail} $\left(\frac{1+\psi}{1-\psi}\right)$-bending
    of $F_i$ into $F_i\oplus_{Z_i}F_i$ with parameters $(R_{4i-3},R_{4i-2})$.

     \item  Consider the  restriction  to $\ma_{4i-2,4i+1}$ of the existing by Theorem \ref{T:BendDetail} 
     $\left(\frac{1+\psi}{1-\psi}\right)$-bending of $F_{i+1}$ into $F_{i+1}\oplus_2 F_{i+1}$
     with parameters $(R_{4i-1},R_{4i})$. 
    Observe that because $\ma_{4i-2,4i}$ is a subset of $F_i$,
     the formula \eqref{E:DefBend} for bending implies that the image of this map is contained in $F_i\oplus_2 F_{i+1}$.
     Define $\mt_{2i}:\ma_{4i-2,4i+1}\to F_i\oplus_2 F_{i+1}$  as the
     resulting map.
     
     \item \dots
\end{itemize}

 To get embeddings into $V\subset X$, we consider compositions:

\[\Phi_{2i-1}:=\mj_{2i-1,2i}\circ\mt_{2i-1}:\ma_{4i-4,4i-1}\to V_{2i-1}\oplus V_{2i}
\]
and
\[\Phi_{2i}:=\mj_{2i,2i+1}\circ\mt_{2i}:\ma_{4i-2,4i+1}\to V_{2i}\oplus V_{2i+1}.
\]

Our next goal is to show that combining these maps we get a
well-defined $(1+\ep)$-bilipschitz map of $\mM$ into $V\subset X$.

We start with checking that on $\ma_{4i-2,4i-1}$, where both
$\Phi_{2i-1}$ and $\Phi_{2i}$ are defined, they coincide. Similarly,
we need to check that on  $\ma_{4i-4,4i-3}$ where both $\Phi_{2i-2}$ and $\Phi_{2i-1}$ are defined, they coincide. The proofs
are the same. We do it only for the first case. The maps
$\mt_{2i-1}$ and $\mt_{2i}$ map $\ma_{4i-2,4i-1}$ isometrically
into $F_i$. Since both $\mj_{2i-1,2i}$ and $\mj_{2i,2i+1}$ map
$F_i$ onto $V_{2i}$ using $J_{2i}$, the maps coincide.

Therefore, the formula 
\[ \Phi (x) = 
\begin{cases}
\mj_{2i-1,2i}\circ\mt_{2i-1} (x) \in V_{2i-1}\oplus V_{2i} &  \hbox{ if } x\in \ma_{4i-4,4i-1},  \ \ \ i\in \mathbb{N}, \\
\mj_{2i,2i+1}\circ\mt_{2i}(x) \in V_{2i}\oplus V_{2i+1} &  \hbox{ if } x\in \ma_{4i-2,4i+1}, \ \ \ i\in \mathbb{N},
\end{cases}
\]
in which we consider $V_{2i-1}\oplus V_{2i}$ as subspaces of $V$, gives a well-defined map. 
It remains
to prove that $\Phi$ is a $(1+\ep)$-bilipschitz embedding. 
To achieve this goal it suffices to establish bilipschitz inequalities in the three cases:

{\bf Case 1.} $x,y\in\ma_{4i-4,4i-1}$. Since
$\Phi_{2i-1}=\mj_{2i-1,2i}\circ\mt_{2i-1}$, by Theorem
\ref{T:BendDetail},
\[
(1-\psi)\|x-y\|_2\le\|\mt_{2i-1}x-\mt_{2i-1}y\|_{F_i\oplus_{Z_i}
F_i}\le (1+\psi)\|x-y\|_2,
\]
 and by Lemma~\ref{L:3rdFDD} {\bf (b)}, 
\begin{equation}\label{E:Case1}
\frac{1-\psi}{(1+\zeta)(1+\gamma)^2}\|x-y\|_2\le\|\Phi_{2i-1}x-\Phi_{2i-1}y\|_X\le
(1+\psi)\|x-y\|_2.
\end{equation}

{\bf Case 2.} $x,y\in\ma_{4i-2,4i+1}$. Since
$\Phi_{2i}=\mj_{2i,2i+1}\circ\mt_{2i}$, by Theorem
\ref{T:BendDetail},
\[
(1-\psi)\|x-y\|_2\le\|\mt_{2i}x-\mt_{2i}y\|_{F_i\oplus_2
F_{i+1}}\le (1+\psi)\|x-y\|_2,
\]
 and by Lemma~\ref{L:3rdFDD} {\bf (c)}, 
\begin{equation}\label{E:Case2}
\frac{1-\psi}{1+\gamma}\|x-y\|_2\le\|\Phi_{2i}x-\Phi_{2i}y\|_X\le
(1+\psi)\|x-y\|_2.
\end{equation}

{\bf Case 3.} $x$ and $y$ are not in the same annulus of the form
$\ma_{2i,2i+3}$. Obviously, it suffices to consider the case
$\|y\|\le \|x\|$.
Let $R_{2i}$ be the smallest ``even'' $R$ such that
$\|y\|  \le R_{2i}$.
Then necessarily $R_{2i+1} \le
\|x\|$, for otherwise $x$  and $y$ would both be in  $\ma_{2i-2,2i+1}$.
Applying condition \eqref{I:3} for choosing
$R_{2i+1}$, one obtains that in
this case $\|y\|\le\frac{\ep}d\|x\|$, and
\begin{equation}\label{E:EpEst1}\left(1-\frac{\ep}d\right)\|x\|\le
\|x\|-\|y\|\le
\|x-y\|\le\|x\|+\|y\|\le\left(1+\frac{\ep}d\right)\|x\|.
\end{equation}
We recall the fact that $\mt_j$ are norm-preserving. Together with
inequalities for $\mj_{j,j+1}$ in Lemma \ref{L:3rdFDD}, it implies
\begin{equation}\label{E:EpEst}\begin{split} \left(\frac1{(1+\zeta)(1+\gamma)^2}-\frac{\ep}{d}\right)\|x\|&\le
\frac1{(1+\zeta)(1+\gamma)^2}\|x\|-\|y\|\le \|\Phi x\|-\|\Phi y\|\\
&\le \|\Phi x-\Phi y\|\le\|\Phi x\|+\|\Phi y\|\\&
\le
\|x\|+\|y\|
\le
\lp{1+\frac{\ep}d}\|x\|.\end{split}\end{equation}
Combining \eqref{E:EpEst1} and \eqref{E:EpEst}, we get
\begin{equation}\label{E:EpEstFin}\begin{split}
\frac1{1+\frac{\ep}d}\left(\frac1{(1+\zeta)(1+\gamma)^2}-\frac{\ep}{d}\right)\,\|x-y\|&\le
\|\Phi x-\Phi y\|\\&\le
\frac{1+\frac{\ep}d}{1-\frac{\ep}d}\,\|x-y\|.
\end{split}
\end{equation}
The conclusion that $\Phi$ is a $(1+\ep)$-bilipschitz embedding of
$\mM$ into $X$ now follows from the choice of $\gamma,\psi,\zeta$,
and $d$ made at the beginning of Section \ref{S:ConstrEmb}.
\end{proof}

To complete the picture, we need to prove Lemma \ref{L:3rdFDD}.
This is done in the remaining part of Section \ref{S:ConstrEmb}.

\begin{proof}[Proof of Lemma \ref{L:3rdFDD}] Let $\zeta,\gamma,\{\gamma_i\}_{i=1}^\infty\in(0,1)$ be the
numbers, and $\{F_i\}_{i=1}^\infty$ be the subspaces of $\ell_2$
introduced at the beginning of Section \ref{S:ConstrEmb}, and, as
before, $n_i=\dim F_i$, $i\in \mathbb{N}$.

Applying Theorem \ref{T:AUC} with $\ep=\zeta$ and $A=1$, for each 
$n_i$ there exists $N_i\in \mathbb{N}$
such that any direct sum $\ell_2^{N_i}\oplus \ell_2^{N_i}$
with direct sum projections of norm $1$ contains a 
$\zeta$-invariant sub-sum $\ell_2^{n_i}\oplus
\ell_2^{n_i}$.\medskip 

In what follows, our construction of FDD uses the Mazur method for
constructing basic sequences \cite[p.~4]{LT77}. To implement it,
we need the definition below.

\begin{definition}
\label{D:l_norming} Let $\Omega\in(0,1]$. A subspace $\mN\subset
X^*$  is called {\it $\Omega$-norming over a subspace $Y\subset
X$} if
\[\forall y\in Y~\sup\{f^*(y) \ :~ f^*\in \mN,~\|f^*\|\le 1\}\ge\Omega\|y\|.\]
\end{definition}

 Let $N\in\mathbb{N}$ and let $\eta>0$. Denote by
$K(N,\eta)\in\mathbb{N}$ the least number for which the unit
sphere of any $N$-dimensional normed space contains a $\eta$-net
of cardinality at most $K(N,\eta)$. It is well known that such
$K(N,\eta)$ exists (see, for example, \cite[Lemma 9.18]{Ost13}).

Since $X$ is infinite-dimensional, by the Dvoretzky Theorem,
 there is a subspace
$U_1\subset X$ with $\dim U_1=N_1+K(N_1,\gamma_1/(1+\gamma_1))$
and $d_{\rm BM}(U_1,\ell_2^{\dim U_1})\le (1+\gamma_1)$. We pick a
finite-dimensional subspace $\mN_1\subset X^*$ which is
$\frac1{1+\gamma_1}$-norming over $U_1$ (see, for example,
\cite[Lemma 4.2]{OO19} for the proof of existence of such
subspace).

Using the Dvoretzky Theorem again, 
we find a subspace $U_2\subset
(\mN_1)_\top:=\{x\in X:~ x^*(x)=0~\forall x^*\in\mathcal{N}_1\}$
such that
\[\dim U_2=N_1+N_2+K(N_2,\gamma_2/(1+\gamma_2))\]
and $d_{\rm BM}(U_2,\ell_2^{\dim U_2})\le (1+\gamma_2)$. Next, we pick a  
finite-dimensional   subspace $\mN_2\subset X^*$ which is
$\frac1{1+\gamma_2}$-norming over $\lin(U_1\cup U_2)$. Proceeding
like this, in Step $k$ we apply the Dvoretzky  Theorem to find a
subspace $U_k\subset (\mN_{k-1})_\top$ such that
\[\dim U_k=N_{k-1}+N_k+K(N_k,\gamma_k/(1+\gamma_k))\]
and $d_{\rm BM}(U_k,\ell_2^{\dim U_k})\le (1+\gamma_k)$. Next, we pick a 
 finite-dimensional subspace $\mN_k\subset X^*$ which is
$\frac1{1+\gamma_k}$-norming over
$\lin\left(\bigcup_{i=1}^kU_i\right)$, and so on.

The fact that the sequence $\{U_i\}_{i=1}^\infty$ forms an FDD of
its closed linear span can be derived from the following lemma.

\begin{lemma}\label{L:ProjKer}
Let $\mf$ be a subspace in $X$. If a subspace
$\mN \subset X^*$ is $\Omega$-norming over $\mf$, and $\me$ is a
subspace of $\mN_\top$, then the projection $P:\me\oplus\mf\to\mf$
given by $P(e+f)=f$, where $e\in\me$, $f\in\mf$ satisfies
$\|P\|\le1/\Omega$.
\end{lemma}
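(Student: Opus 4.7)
The plan is to unpack the definitions and exploit the fact that functionals in $\mN$ annihilate $\me$, so that evaluating a near-supremum functional on $e+f$ sees only the $f$-component. This reduces the bound on $\|P(e+f)\|=\|f\|$ to the norming inequality applied to $f$.

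More concretely, I would fix arbitrary $e\in\me$ and $f\in\mf$ and aim to establish $\Omega\|f\|\le \|e+f\|$. The $\Omega$-norming hypothesis from Definition \ref{D:l_norming} guarantees, for any $\eta>0$, a functional $g^*\in\mN$ with $\|g^*\|\le 1$ and $g^*(f)\ge \Omega\|f\|-\eta$. Since $e\in\me\subset \mN_\top$, by definition of $\mN_\top$ we have $g^*(e)=0$, hence
\[
g^*(e+f)=g^*(e)+g^*(f)=g^*(f)\ge \Omega\|f\|-\eta.
\]
On the other hand, $g^*(e+f)\le \|g^*\|\,\|e+f\|\le \|e+f\|$. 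Combining these two inequalities and letting $\eta\downarrow 0$ yields $\Omega\|f\|\le \|e+f\|$, which rearranges to $\|P(e+f)\|=\|f\|\le (1/\Omega)\|e+f\|$. Taking the supremum over nonzero $e+f\in\me\oplus\mf$ gives $\|P\|\le 1/\Omega$.

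I expect no serious obstacle here: the argument is essentially a one-line Hahn–Banach-style computation, with the only subtlety being the handling of the supremum in the definition of $\Omega$-norming (addressed by the $\eta$-approximation above, or equivalently by replacing the sup with a max if $\mN$ is closed and the unit ball is weak-$^*$ compact in a finite-dimensional $\mN$). No additional structure on $\mf$ or $\me$ beyond being subspaces, and no compactness or completeness hypothesis, is required.
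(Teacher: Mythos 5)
Your proof is correct and is essentially identical to the paper's: both fix $e\in\me$, $f\in\mf$, pick a near-norming functional $f^*\in\mN$ with $\|f^*\|\le 1$ and $f^*(f)\ge(\Omega-\eta)\|f\|$, use $f^*(e)=0$ to get $\|e+f\|\ge f^*(e+f)=f^*(f)$, and let $\eta\downarrow 0$. The only differences (writing $\Omega\|f\|-\eta$ vs.\ $(\Omega-\ve)\|f\|$, normalizing $\|f^*\|=1$ vs.\ $\le 1$) are cosmetic.
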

\begin{proof}
We need to show that $\|f\|\le \frac1\Omega\|e+f\|$. 
Let $\ve>0$ be arbitrary and  $f^*\in
\mN$ be such that $\|f^*\|=1$ and $f^*(f)\ge \lp{\Omega -\ve}\|f\| $. Since
$e\in\mN_\top$, one has \[\|e+f\|\ge f^*(e+f)\ge
\lp{\Omega -\ve}\|f\|.\]
Since $\ve>0$ is arbitrary, the proof is completed.
\end{proof}

Lemma \ref{L:ProjKer} with $\mf= \lin\{U_i\}_{i=1}^k$  and $\me=\lin\{U_i\}_{i=k+1}^\infty$ 
implies that for any $k\in\mathbb{N}$ the projection $P_k$
of $\lin\{U_i\}_{i=1}^\infty$ onto
$\mf$ containing $\me$ in the kernel has norm $\le(1+\gamma_k)$. 
Now, the
standard argument  \cite[p.~47]{LT77} implies that
$\{U_i\}_{i=1}^\infty$ 
 forms an FDD of its linear span.
 It also follows that for all $i\ge 1$ the projections $U_i\oplus U_{i+1} \to U_{i}$ given by $(x_1,x_2) \to x_1$,
  have norm $\le(1+\gamma_i)$. 

Further, we define subspaces $\{W_{i}\}_{i=1}^\infty$ as follows.
The subspace $W_{2i}$, $i\in\mathbb{N}$, is picked as an arbitrary
$N_{i}$-dimensional subspace of $U_{i+1}$.

Before defining $\{W_{2k-1}\}_{k=1}^\infty$, we endow each $U_i$
with a Euclidean inner product and norm from a Euclidean space
$\tilde U_i$ on which the Banach-Mazur distance
$d_{\rm BM}(U_i,\ell_2^{\dim U_i})$ is attained. 

We denote this norm on $U_i$ by $\nm{\cdot}_\sim^i$ and assume that it 
 satisfies the condition
\begin{equation}
\label{E:quantBM}
\forall x\in U_i\quad
\frac1{1+\gamma_i}\|x\|^i_{\sim} \le \|x\|_X \le   \|x\|^i_{\sim}.
\end{equation}

Let $G_{2i+1}$ be the orthogonal complement of $W_{2i}$ in $U_{i+1}$ 
 endowed with the inner product of $\tilde
U_{i+1}$, and $G_1$ be $U_1$. As such, $G_i$ is defined for odd
$i$ only.

We say that a set $\mathcal{D}$ is {\it $\eta$-dense} $(\eta>0)$
in a metric space $\mathcal{M}$ if, for every $m\in\mathcal{M}$,
there is $x\in\mathcal{D}$ such that $\|m-x\|<\eta$.

By the definition of $K(N,\eta)$, there is a
$\gamma_i/(1+\gamma_i)$-dense set $\mathcal{D}_i$ of cardinality
$K(N_i,\gamma_i/(1+\gamma_i))$ in the unit sphere $S(W_{2i})$. For
each $w\in\mathcal{D}_i$, consider a supporting functional
$w^*_w\in X^*$ such that $w^*_w(w)=\|w^*_w\|=1$. The choice of
dimension of $G_{2i-1}$ is such that the intersection
\begin{equation}\label{E:Intersect}
G_{2i-1}\bigcap(\cap_{w\in\mathcal{D}_i}\ker w^*_w)
\end{equation} 
has dimension at least $N_i$ (it can be more because some 
of the supporting functionals can be linearly dependent). 
We pick in the intersection \eqref{E:Intersect} a subspace 
of dimension $N_i$ and denote it $W_{2i-1}$.

The verification that the functionals
$\{w^*_w\}_{w\in\mathcal{D}_i}$ span a subspace which is
$\left(1-\frac{\gamma_i}{1+\gamma_i}\right)=\frac1{1+\gamma_i}$-norming
over $W_{2i}$ is immediate. 
Applying Lemma \ref{L:ProjKer} again, 
with $\mf= W_{2i}$  and $\me=W_{2i-1}$, we obtain that
 the projections $W_{2i-1}\oplus W_{2i}\to W_{2i}$ given by $(x_1,x_2) \to x_2$,
  have norm $\le(1+\gamma_i)$. 
Therefore,  we conclude that the norms of both direct sum projections in the
direct sum $W_{2i-1}\oplus W_{2i}$ $(i\in \mathbb{N})$ do not
exceed $(1+\gamma_i)$; recall that the bound on the direct sum projection 
$W_{2i-1}\oplus W_{2i}\to W_{2i-1}$ follows  from the bound on the projection $U_i\oplus U_{i+1} \to U_{i}$.

The fact that $\{W_i\}_{i=1}^\infty$  
 forms an FDD in its closed
linear span follows from the criterion in \cite[p.~47]{LT77}.

Finally, we prove the next auxiliary result.

\begin{lemma}\label{L:Ren_2i-1_2i}  Let $\nm{ \cdot}^i_N$ be the following norm on 
$W_{2i-1}\oplus W_{2i}$:   
\[\|(x_1,x_2)\|^{i}_N=\max\{\|x_1\|^{i}_{\sim}, \|x_2\|^{i+1}_{\sim}, 
 \ \|(x_1,x_2)\|\}, \]
 where $\|(x_1,x_2)\|$ means $\|x_1+x_2\|_X$ and $\nm{\cdot}_\sim^i$ is the introduced above norm on $U_i$. 
Then, the space $\lp{W_{2i-1}\oplus W_{2i}, \nm{ \cdot}^i_N}$ has 
the direct sum projections of norm $1$ and 
\[ \|(x_1,x_2)\|  \le\|(x_1,x_2)\|^i_N  \le (1+\gamma_i)^2\|(x_1,x_2)\|. \]
\end{lemma}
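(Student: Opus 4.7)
The proof of Lemma~\ref{L:Ren_2i-1_2i} is essentially a bookkeeping exercise that assembles ingredients already in hand. I would structure it in three short steps.

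First, I would verify that $\|\cdot\|_N^i$ is a norm and identify its restrictions to the two summands. It is the pointwise maximum of three norms on $W_{2i-1}\oplus W_{2i}$ — the two Euclidean norms $\|\cdot\|_\sim^i$ and $\|\cdot\|_\sim^{i+1}$ pulled back through the algebraic projections, and $\|x_1+x_2\|_X$ (a norm because $W_{2i-1}\cap W_{2i}=\{0\}$, as these spaces sit in an FDD of $X$) — and hence is itself a norm. Setting $x_2=0$ and applying the lower inequality in \eqref{E:quantBM} (so that $\|x_1\|_X\le\|x_1\|_\sim^i$) shows that the restriction to $W_{2i-1}$ is $\|\cdot\|_\sim^i$, and symmetrically on $W_{2i}$ it is $\|\cdot\|_\sim^{i+1}$. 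The direct sum projections then have norm $1$ automatically: $\|P_1(x_1,x_2)\|_\sim^i=\|x_1\|_\sim^i\le\|(x_1,x_2)\|_N^i$ by the very definition of the maximum, with equality attained at $(x_1,0)$.

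Second, the lower comparison $\|x_1+x_2\|_X\le\|(x_1,x_2)\|_N^i$ is built into the definition, so only the upper comparison $\|(x_1,x_2)\|_N^i\le(1+\gamma_i)^2\|x_1+x_2\|_X$ needs work; for this I would dominate each of the three entries of the max separately. The $X$-term is trivial. For $\|x_1\|_\sim^i$, I would chain the upper inequality $\|x_1\|_\sim^i\le(1+\gamma_i)\|x_1\|_X$ from \eqref{E:quantBM} with the estimate $\|x_1\|_X\le(1+\gamma_i)\|x_1+x_2\|_X$ on the direct sum projection of $W_{2i-1}\oplus W_{2i}$ in the $X$-norm, which was recorded just above the statement of the lemma and originates from Lemma~\ref{L:ProjKer}. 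The analogous argument for $\|x_2\|_\sim^{i+1}$ gives $\|x_2\|_\sim^{i+1}\le(1+\gamma_{i+1})(1+\gamma_i)\|x_1+x_2\|_X$.

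The only subtle point is the index bookkeeping: $W_{2i-1}$ lies in $U_i$ while $W_{2i}$ lies in $U_{i+1}$, so the two Euclidean norms in the definition of $\|\cdot\|_N^i$ carry different superscripts $i$ and $i+1$. The monotonicity $\gamma_{i+1}\le\gamma_i$ of the decreasing sequence $\{\gamma_i\}$ chosen at the start of the section is exactly what collapses the factor $(1+\gamma_i)(1+\gamma_{i+1})$ into the stated $(1+\gamma_i)^2$. There is no genuine obstacle.
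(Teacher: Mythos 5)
Your proof is correct and follows essentially the same route as the paper: the lower comparison is definitional, the projection bound follows because each $\|x_j\|_\sim$ term appears in the max, and the upper comparison is obtained by chaining the Banach--Mazur estimate \eqref{E:quantBM} with the $(1+\gamma_i)$ bound on the $X$-norm projection and using $\gamma_{i+1}\le\gamma_i$ to land on $(1+\gamma_i)^2$. The paper merely writes the projection claim off as "immediate from the definition" where you spell it out; the substance is identical.
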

\begin{proof} 
The statement about the norms of projections is immediate from the
definition. 

 Let $(x_1,x_2)\in W_{2i-1}\oplus W_{2i}$.
Recall that
from the norms of the direct sum projections (as linear maps between subspaces of $X$ with the induced norm) we have
\[\|x_1\|_X \le (1+\gamma_{i})\|(x_1,x_2)\|,\qquad \|x_2\|_X \le
(1+\gamma_{i})\|(x_1,x_2)\|.\]
From the construction of  $\{U_i\}_{i=1}^\infty$ we have 

\[ \frac1{1+\gamma_i}\|x_1\|^i_{\sim} \le \|x_1\|_X \le \|x_1\|^i_{\sim}, \]
and 
\[ \frac1{1+\gamma_i} \|x_2\|^{i+1}_{\sim}  \ \  
 \le\frac1{1+\gamma_{i+1}}\|x_2\|^{i+1}_{\sim} \ \ 
 \le \|x_2\| _X \ \ \le \|x_2\|^{i+1}_{\sim} .\]

Therefore,
\[\begin{split}
\|(x_1,x_2)\|&\le\|(x_1,x_2)\|^{i}_N \\&
=\max\{\|x_1\|^{i}_{\sim}, \|x_2\|^{i+1}_{\sim}\ , \|(x_1,x_2)\|\}
\\& \le \max\{(1+\gamma_i)\|x_1\|_X,
(1+\gamma_i)\|x_2\|_X, \|(x_1,x_2)\|\}.
\end{split}
\]

Hence, 
\[ \|(x_1,x_2)\|  \le\|(x_1,x_2)\|^{i}_N  \le (1+\gamma_i)^2\|(x_1,x_2)\|. \qedhere\]
\end{proof}

Next, we apply Theorem \ref{T:AUC} to each of the sums
$(W_{2i-1}\oplus W_{2i}, \|\cdot\|^i_N)$ for all $i\in \mathbb{N}$. 
We find subspaces of dimension $n_i$, which we denote $F_{i}' \subset W_{2i-1}$ and $F_{i}'' \subset W_{2i}$ such that 
the norm $\nm{\cdot}^i_N$ restricted to 
$F_i'\oplus F_i''$ is $\zeta$-invariant. 

Applying Lemma~\ref{L:InvNorm}  we obtain that the norm 
\[|||y_1+y_2|||^i :=\sup_{O_1,O_2{\rm~orthogonal~on~}
 F_i', F_i''}\| \lp{O_1y_1,O_2y_2}\|^i_N,\quad y_1\in F_i', y_2\in  F_i''\]
 on 
$F_i'\oplus F_i''$ satisfies
\begin{equation}
\label{E:toCond}
\|\lp{y_1,y_2}\|^i_N\le |||y_1+y_2|||^i\le (1+\zeta)\|\lp{y_1,y_2}\|^i_N
\end{equation}
 and 
\[ |||O_1y_1+O_2y_2|||^i=|||y_1+y_2|||^i \]
for every orthogonal operators $O_1$ on $F_i'$ and $O_2$ on $F_i''$. 

By Lemma~\ref{L:InvToUncond}  there exists a $1$-unconditional norm
$\|\cdot\|_{Z_i}$ on $\mathbb{R}^2$ such that for any $ y_1\in F_i'$ and  $y_2\in  F_i''$
\[ 
|||y_1+y_2|||^i=\|(\|y_1\|^i_\sim,\|y_2\|^{i+1}_\sim)\|_{Z_i}.
\]

For $i\in \mathbb{N}$ define  $V_{2i-1}$ as the subspace of $X$ that coincides with $F'_i$   and $V_{2i}$ 
as the subspace of $X$ that coincides with $F''_i$. 

We choose isometries $I_i':F_i\to F_i'$ and $I_i'':F_i\to F_i''$ 
and define $J_{2i-1}:F_i\to V_{2i-1}$ and $J_{2i}:F_i\to V_{2i}$ 
as compositions of these isometries and the mentioned above 
natural maps of $F_i'$ onto $V_{2i-1}$ and $F_i''$ onto $V_{2i}$. 

For $v\in F_i \subset \ell_2$ we have 
\[ \nm{v}_2 = \nm{J_{2i-1}v}_\sim^{i}\le (1+\gamma_{i})\nm{J_{2i-1}v}_X
\le   (1+\gamma_{i})\nm{J_{2i-1}v}_\sim^{i}=  (1+\gamma_{i})\nm{v}_2,\]
and 
\[ \nm{v}_2 = \nm{J_{2i}v}_\sim^{i+1}\le (1+\gamma_{i+1})\nm{J_{2i}v}_X
\le   (1+\gamma_{i+1})\nm{J_{2i}v}_\sim^{i+1}=  (1+\gamma_{i+1})\nm{v}_2.\]
This is property {\bf (a)} in the Lemma~\ref{L:3rdFDD}.

To prove property {\bf (b)}, note that for $(u,v) \in F_i'\oplus F_i''$, by \eqref{E:toCond} we have
\[  \nm{ \lp{J_{2i-1}u, J_{2i}v}}^i_N \le  ||| J_{2i-1}u+ J_{2i} v |||^i \le (1+\zeta) \nm{ \lp{J_{2i-1}u, J_{2i}v}}^i_N. \] 
Using the inequality on the left and Lemma~\ref{L:Ren_2i-1_2i}   , we get
\[ \begin{aligned}
& \nm{ J_{2i-1}u+ J_{2i}v}_X \le \nm{ \lp{J_{2i-1}u, J_{2i}v}}^i_N \le   ||| J_{2i-1}u+ J_{2i} v |||^i  \\ 
& =  \nm{\lp{\nm{J_{2i-1}u}^i_\sim, \nm{J_{2i}v}^{i+1}_\sim }}_{Z_i} 
= \nm{\lp{\nm{u}_2, \nm{v}_2}}_{Z_i},
\end{aligned}\]
which is the inequality on the left in {\bf (b)}. 

On the other hand, we have 
\[ \begin{aligned}
& \nm{\lp{\nm{u}_2, \nm{v}_2}}_{Z_i}  =  \nm{\lp{\nm{J_{2i-1}u}^i_\sim, \nm{J_{2i}v}^{i+1}_\sim }}_{Z_i}   = ||| J_{2i-1}u+ J_{2i} v |||^i \\
& \le (1+\zeta) \nm{ \lp{J_{2i-1}u, J_{2i}v}}^i_N  \le (1+\zeta)(1+\gamma_i)^2 \nm{ J_{2i-1}u + J_{2i}v}_X, 
\end{aligned}\]
the last inequality being a consequence of Lemma~\ref{L:Ren_2i-1_2i}.

To prove property {\bf (c)} note that because $V_{2i}$ and $V_{2i+1}$ are orthogonal subspaces of the Euclidean space $\tilde{U}_{i+1}$ 
(i.e. the space $U_{i+1}$ endowed with the norm $\nm{\cdot}^{i+1}_\sim$), for $u\in F_i$ and $v\in F_{i+1}$ 
we have 
\[ \begin{aligned}
& \nm{ J_{2i}u + J_{2i+1}v}_X \stackrel{\eqref{E:quantBM}}{\le}  \nm{ J_{2i}u + J_{2i+1}v}^{i+1}_\sim 
= \lp{ \lp{ \nm{ J_{2i}u }^{i+1}_\sim}^2 + \lp{ \nm{ J_{2i+1}v }^{i+1}_\sim}^2}^\frac12 \\
& =  \lp{\nm{u}^2_2+ \nm{v}^2_2}^\frac12.
\end{aligned}\]
On the other hand, 
\[ \begin{aligned}
& \lp{\nm{u}^2_2+ \nm{v}^2_2}^\frac12 
 = \lp{ \lp{ \nm{ J_{2i}u }^{i+1}_\sim}^2 + \lp{ \nm{ J_{2i+1}v }^{i+1}_\sim}^2}^\frac12 = 
\nm{ J_{2i}u + J_{2i+1}v}^{i+1}_\sim \\
& \le (1+\gamma_{i+1})\nm{ J_{2i}u + J_{2i+1}v}_X.
\end{aligned}\]
Therefore  property {\bf (c)} also holds. 

In conclusion, $\{V_i\}_{i=1}^\infty$ forms an FDD of its closed linear span, satisfying all conditions of Lemma
\ref{L:3rdFDD}.
\end{proof}

\section{A counterexample to the general bending
problem}

It would be very interesting to prove analogues of our main
result, Theorem \ref{T:Main}, for spaces which are different from
the Hilbert space. To state some relevant problems, we recall
that a Banach space $W$ is said to be {\it finitely represented}
in a Banach space $X$ if,  for every $\ep>0$ and every
finite-dimensional subspace $F$ in $W$, there is a
finite-dimensional subspace $G$ in $X$ such that $\dim G=\dim F$
and $d_{\rm BM}(F,G)\le 1+\ep$.
\medskip

 The first question of interest is the following

\begin{problem}\label{P:FinRepLF} Let $\mM$ be a locally finite
subset of an infinite-dimensional Banach space $W$ and assume that
$W$ is finitely represented in a Banach space $X$. Does it imply
that, for every $\ep>0$, the space $\mM$ admits a
$(1+\ep)$-bilipschitz embedding into $X$?
\end{problem}

To pave a way towards solving this problem, it is desirable to obtain an affirmative answer to the problem below. Notice that its formulation uses
Definition \ref{D:bend}.
\medskip

\noindent{\bf General Bending Problem:} {\it Let $X$ and $Y$ be
finite-dimensional Banach spaces such that there exist two linear
isometric embeddings $I_1:Y\to X$ and $I_2:Y\to X$ with distinct
images, $Y_1=I_1(Y)$ and $Y_2=I_2(Y)$. Assume that $X$ is the
direct sum of $Y_1$ and $Y_2$ and that the direct sum projections
of $X=Y_1\oplus Y_2$ have norm $1$. Does it imply that for every
$\ep>0$ there exist $(r,R)$ with $0<r<R<\infty$ for which there
exists a $(1+\ep)$-bending of $Y$ in the space $X$ from $I_1$ to
$I_2$ with parameters $(r,R)$?}
\medskip

However, as the following theorem shows, the answer to this problem is negative even in the case where $Y$ is
a two-dimensional Euclidean space. Thence, the General Bending Problem
as stated above is excessively strong, one should look for weaker
statements which might be true. Also, perhaps
suitable developments of Theorem \ref{T:CountGenBendP} can be used  to
obtain the affirmative answer to  the question of Problem \ref{P:OO19}.
\medskip

\begin{theorem}\label{T:CountGenBendP} There exists a
$4$-dimensional Banach space $X$ satisfying the conditions:
\begin{enumerate}[{\bf (A)}]

\item It is a direct sum of two $2$-dimensional Euclidean spaces
$Y_1$ and $Y_2$ with direct sum projections having norm $1$.

\item There exists $\ep>0$ such that for any $(r,R)$ satisfying
$0<r<R<\infty$ and any isometric embeddings $I_1:\ell_2^2\to Y_1$
and $I_2:\ell_2^2\to Y_2$, there is no $(1+\ep)$-bending with parameters $(r,R)$ of
$\ell_2^2$ in $X$ from $I_1$ to $I_2$.

\end{enumerate}
\end{theorem}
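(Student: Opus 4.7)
The plan is to construct an explicit $4$-dimensional Banach space $X$ whose direct-sum structure $Y_1\oplus Y_2$ is ``rigidly non-unconditional'' with respect to every pair of orthonormal frames, and then derive a contradiction from any putative low-distortion bending via a $2$-dimensional winding argument together with quantitative bilipschitz estimates on carefully chosen four-point configurations.

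Construction. Take $Y_1=Y_2=\ell_2^2$ and let $S:Y_2\to Y_1$ denote a fixed linear isometric identification of the two copies. Equip $X=Y_1\oplus Y_2$ with a norm of the form
\[
\|(u,v)\|_X:=\max\!\bigl(\|u\|_2,\;\|v\|_2,\;\tfrac{1}{\lambda}\,\|u+Sv\|_2\bigr),
\]
for a suitable constant $\lambda\in(1,2)$. Then $Y_1,Y_2$ are $2$-dimensional Euclidean subspaces of $X$ and the direct-sum projections have norm~$1$. Under a change of orthonormal bases $(u,v)\mapsto(O_1u,O_2v)$ with $O_1,O_2$ orthogonal on $Y_1,Y_2$, the coupling term becomes $u'+(O_1SO_2^{-1})v'$; since $O_1SO_2^{-1}$ is always invertible, the polarization identity rules out $1$-unconditionality of $\|\cdot\|_X$ with respect to every such frame.

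Reduction of a putative bending. Assume, toward a contradiction, that $T:\ell_2^2\to X$ is a $(1+\varepsilon)$-bending from $I_1$ to $I_2$ with parameters $(r,R)$. The projections $a(y):=P_1T(y)\in Y_1$ and $b(y):=P_2T(y)\in Y_2$ are $(1+\varepsilon)$-Lipschitz, with $a=I_1$ and $b=0$ on $B(0,r)$, and $a=0$ and $b=I_2$ outside $B(0,R)$. Since $Y=\ell_2^2$ is $2$-dimensional, each sphere $S_t\subset Y$ is a circle. At $t=r$ the loop $a|_{S_r}$ has winding number $\pm 1$ around the origin of $Y_1$, while at $t=R$ it is identically $0$. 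The Lipschitz bound $\|a(y)\|_{Y_1}\le(1+\varepsilon)(R-\|y\|)$ forces $\mathrm{diam}\,a(S_t)\to 0$ as $t\to R$, so a winding-number continuity argument produces $t^*\in(r,R]$ and $y^*\in S_{t^*}$ with $a(y^*)=0$; hence $T(y^*)=b(y^*)\in Y_2$ with $\|b(y^*)\|_{Y_2}$ within $(1\pm\varepsilon)$ of $t^*$. By the symmetric argument on $b$, there exist $t^{**}\in[r,R)$ and $y^{**}\in S_{t^{**}}$ with $T(y^{**})=a(y^{**})\in Y_1$.

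Quantitative obstruction. Writing $u_0:=a(y^{**})\in Y_1$ and $v_0:=b(y^*)\in Y_2$, one analyzes the $X$-norms of $\alpha u_0+\beta v_0$ by applying the bilipschitz inequality to the points $y^*,y^{**}$, their antipodes, and their radial rescalings onto $S_r$ and $S_R$ (where $T$ agrees with the linear maps $I_1,I_2$). The resulting system of approximate equalities forces the $2$-dimensional slice $W:=\mathrm{span}(u_0,v_0)$ of $X$ to be $O(\varepsilon)$-close in Banach--Mazur distance to a plane with a $1$-unconditional basis $(u_0,v_0)$. By construction of $X$, however, every $2$-plane spanned by a unit vector in $Y_1$ and a unit vector in $Y_2$ picks up a nontrivial contribution from the coupling $u+Sv$, and a compactness argument on the $O(2)\times O(2)$-orbit of $S$ yields a universal $\varepsilon_0>0$ such that every such plane is at Banach--Mazur distance at least $1+\varepsilon_0$ from any $1$-unconditional plane. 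Choosing $\varepsilon<\varepsilon_0/2$ gives the contradiction.

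The principal obstacles are the following. First, the topological step only produces $y^*,y^{**}$ without fine control over their positions, so one must confirm that $t^*,t^{**}$ can be kept bounded away from the extremes $r,R$ (otherwise the $4$-point estimates degenerate). Second, turning the approximate identities above into a genuine Banach--Mazur comparison with the class of $1$-unconditional planes requires balancing several bilipschitz estimates simultaneously, and extra care is needed to handle the case in which $y^*$ and $y^{**}$ happen to be (nearly) collinear in $Y$. Third, verifying the universal $\varepsilon_0$ claim reduces to showing that the orbit $\{O_1SO_2^{-1}:O_1,O_2\in O(2)\}$ is uniformly bounded away from the zero operator in norm, which is straightforward but must be carried out explicitly to make the bound independent of $I_1,I_2$.
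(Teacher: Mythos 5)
Your proposal has several genuine gaps; it is not a correct proof, and it is also quite different in spirit from the paper's argument.

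\textbf{The compactness claim about 2-planes is false for your construction.} Take $e\in S(Y_1)$ and $g\in S(Y_2)$ with $Sg\perp e$. Then on $\mathrm{span}(e,g)$ your norm reads
\[
\bigl\|(\alpha e,\beta g)\bigr\|_X=\max\Bigl(|\alpha|,|\beta|,\tfrac1\lambda\sqrt{\alpha^2+\beta^2}\Bigr),
\]
which is already $1$-unconditional in the basis $(e,g)$ (indeed it is the maximum of two $1$-unconditional norms). So there exist $u_0\in S(Y_1)$, $v_0\in S(Y_2)$ whose span is at Banach--Mazur distance exactly $0$ from the class of planes with $1$-unconditional basis $(u_0,v_0)$, and the ``universal $\varepsilon_0$'' you hope to extract from compactness of the $O(2)\times O(2)$-orbit of $S$ does not exist. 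The polarization remark does not save this: non-orthogonal invariance of the coupling term is not the same as failure of $1$-unconditionality in some orthonormal frame, and in the example above the coupling term contributes a perfectly unconditional quantity. To make a construction work you must kill \emph{every} non-coordinate $2$-plane, not just some of them; this is exactly what Lemma~\ref{L:VerOrHor} in the paper arranges by shaving small symmetric caps off $S(Y_1\oplus_2 Y_2)$ so that each $2$-plane $Z\neq Y_1,Y_2$ has a non-strictly-convex section and hence $d_{\rm BM}(Z,\ell_2^2)\ge 1+\ve(\gamma)$ once $\Omega(Z,Y_i)\ge\gamma$.

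\textbf{The winding-number step does not produce an interior zero.} The map $a=P_1T$ vanishes identically on $S_R$, so degree theory gives nothing beyond a zero of $a$ on $S_R$ itself, where $T=I_2$ and the information is vacuous. There is no topological obstruction to $a$ being nonvanishing on the open annulus; already the model $a(y)=\bigl(1-\frac{\|y\|-r}{R-r}\bigr)_{+}I_1(y)$ is a continuous interpolation with winding number $1$ on $S_r$, identically zero on $S_R$, and no zeros in between. You flag ``keeping $t^*,t^{**}$ away from the extremes'' as an obstacle, but in fact the extremes are the \emph{only} values the argument, as stated, guarantees. Without an interior special point, the subsequent four-point estimate never gets started.

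\textbf{The reduction to Banach--Mazur closeness is not carried out.} Even granting special points $y^*,y^{**}$, knowing the $X$-norms of $\alpha u_0+\beta v_0$ at a handful of $(\alpha,\beta)$ values does not pin down the norm on $\mathrm{span}(u_0,v_0)$ to within $O(\varepsilon)$ of a $1$-unconditional one; a plane norm is an infinite-dimensional object and four (or eight) point evaluations leave enormous freedom. You note this as an obstacle, and I do not see how to close it within your framework.

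For comparison, the paper takes a completely different route. After building $X$ so that only $Y_1$ and $Y_2$ are nearly Euclidean among all $2$-planes, it invokes Rademacher differentiability of the bending $T$: at a.e.\ $y$, the derivative $DT(y)$ is a $(1+\varepsilon)$-bilipschitz linear embedding of $\ell_2^2$, hence $DT(y)Y$ is $\gamma$-close to $Y_1$ or to $Y_2$ (``blue'' or ``yellow''). A Fubini argument selects a vertical segment on which non-differentiability has measure zero, an intermediate-value argument in the position of a sliding subinterval of length $r/2$ equalizes the blue and yellow measures, and the Fundamental Theorem of Calculus for Lipschitz maps turns the difference $T(b)-T(a)$ into an integral of $DT\cdot u$ split over the blue and yellow sets. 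Observation~\ref{O:UCLargeDis} then forces the two halves to land essentially in $Y_1$ and $Y_2$ respectively, and the Pythagorean norm of their sum is too small by a factor $\approx\sqrt2/2$, contradicting the lower bilipschitz bound. This measure-theoretic averaging is what replaces your pointwise four-point scheme and what makes the argument quantitative without needing to locate any particular zero of $P_1T$.
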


Recall that, for a Banach space $X$,  $S(X)$ denotes the unit sphere in $X$.
The {\it spherical
opening} between subspaces $U$ and $W$ of a Banach space $X$ is defined as:
\[\Omega  (U,W)=\max \{\sup _{u\in S(U)}\hbox{dist}(u,S(W)),
\sup_{w\in S(W)}\hbox{dist}(w,S(U))\}.
\]
It is easy to see that $\Omega$ is a metric on the set of all closed subspaces of a Banach space, and that this metric space is compact if the Banach space is finite-dimensional.
We refer to \cite[Section 3.12]{Ost94} for more properties of this metric.
\medskip

\begin{lemma}\label{L:VerOrHor}
Let $Y_1$ and $Y_2$ be $2$-dimensional Euclidean spaces  and let $\delta >0$. 
There
exists a norm on $Y_1\oplus Y_2$ such that the obtained normed
space $(X,\|\cdot\|_X)$ satisfies the conditions:

\begin{enumerate}[{\rm (i)}]

\item \label{I:Isom} On each of the summands $Y_1$ and $Y_2$ the
norm is isometrically equivalent to its original norm - the
$\ell_2^2$ norm.

\item \label{I:Proj1} The projection onto any of the summands $Y_1$ or $Y_2$, whose kernel equals the other summand, has
norm $1$.

\item \label{I:NotEucl} For every  sufficiently small $\gamma>0$,  there exists $\ve(\gamma)>0$
 such that every two-dimensional subspace $Z$  of $X$ satisfying
$\Omega(Z,Y_1)\ge\gamma$ and $\Omega(Z,Y_2)\ge\gamma$, satisfies
$d_{\rm BM} (Z,\ell_2^2)\ge 1+\ve(\gamma)$, where $d_{\rm BM}$ is the
Banach-Mazur distance.

\item \label{I:CloseEucl} The norm of $X$ is not far from the norm
of $Y_1\oplus_2 Y_2$ denoted by  $\|\cdot\|_2$. Namely,
\begin{equation}\label{E:NormEquiv}
\forall x\in X \quad(1-\delta^2/2)\|x\|_X\le \|x\|_2\le \|x\|_X.
\end{equation}

\end{enumerate}

\end{lemma}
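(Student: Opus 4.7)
The plan is to construct $\|\cdot\|_X$ as a slab perturbation of the Euclidean norm. Specifically, set
\[
B_X := B_2 \cap \bigcap_{v\in V}\bigl\{x\in\mathbb{R}^4:|\langle x,v\rangle|\le 1-\delta^2/2\bigr\},
\]
where $B_2$ is the Euclidean unit ball of $\mathbb{R}^4=Y_1\oplus_2 Y_2$ and $V\subset S(\mathbb{R}^4)$ is a finite collection of unit vectors with $\|P_jv\|_2\le 1-\delta^2/2$ for every $v\in V$ and $j=1,2$; let $\|\cdot\|_X$ be the Minkowski gauge of $B_X$.

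The verifications (i), (ii) and (iv) would be immediate from this setup: the bound $|\langle y,v\rangle|\le\|P_jv\|_2\|y\|_2\le 1-\delta^2/2$ for $y\in B_2\cap Y_j$ shows that no slab cuts $B_2\cap Y_j$, so $B_X\cap Y_j=B_2\cap Y_j$, giving (i); together with $B_X\subset B_2$ this forces the direct-sum projections to have norm one, giving (ii); and the inclusions $(1-\delta^2/2)B_2\subset B_X\subset B_2$, which follow directly from the slab definitions, yield \eqref{E:NormEquiv} and hence (iv).

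The heart of the lemma is (iii). A $2$-dimensional subspace $Z\subset X$ carries a non-Euclidean $X$-norm exactly when at least one slab cuts $Z\cap B_2$, equivalently when $\max_{v\in V}\|P_Zv\|_2>1-\delta^2/2$. I would choose $V$ \emph{asymmetrically} with respect to the $O(Y_1)\times O(Y_2)$ action, because by Lemmas~\ref{L:InvNorm} and~\ref{L:InvToUncond} any such invariant construction necessarily produces a norm of the form $\|x\|_X=\|(\|P_1x\|_2,\|P_2x\|_2)\|_Z$ for some $1$-unconditional two-dimensional norm $Z$, and a direct computation then shows that every ``graph subspace'' $\{(y,\phi y)/\sqrt2:y\in Y_1\}$ coming from a linear isometry $\phi:Y_1\to Y_2$ (these satisfy $\Omega(Z,Y_j)=\sqrt{2-\sqrt2}$ for $j=1,2$) would be Euclidean in $X$, in violation of (iii). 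For an asymmetric finite $V$, using the principal-angle representation $Z=\operatorname{span}(\cos\theta_1 e'_1+\sin\theta_1 f'_1,\cos\theta_2 e'_2+\sin\theta_2 f'_2)$ with principal angles $\theta_1\le\theta_2$ relative to $Y_1$, the continuous function $z\mapsto\|P_1z\|_2$ on $S(Z)$ ranges over $[\cos\theta_2,\cos\theta_1]$. If $Z$ lies in the compact family $\{Z:\Omega(Z,Y_j)\ge\gamma\text{ for }j=1,2\}$ and $\gamma$ exceeds a threshold comparable to $\delta$, this range crosses $[\sqrt{1-(1-\delta^2/2)^2},\,1-\delta^2/2]$, so there exists a unit vector $z_Z\in S(Z)$ satisfying $\|P_jz_Z\|_2\le 1-\delta^2/2$ for both $j$; such a $z_Z$ is then an admissible slab normal lying in $Z$. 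A standard open-cover compactness argument on the Grassmannian then produces a finite asymmetric set $V=\{z_{Z_1},\dots,z_{Z_k}\}$ for which $\max_{v\in V}\|P_Zv\|_2>1-\delta^2/2$ for every such $Z$.

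The quantitative bound $d_{\rm BM}(Z,\ell_2^2)\ge 1+\ep(\gamma)$ would then follow: compactness combined with continuity of $Z\mapsto\max_{v\in V}\|P_Zv\|_2$ gives a uniform cut depth $\max_{v\in V}\|P_Zv\|_2-(1-\delta^2/2)\ge\eta(\gamma)>0$, which through the standard geometric comparison between the flat face on $\partial(Z\cap B_X)$ and the Euclidean arc on $\partial(Z\cap B_2)$ translates into the required Banach--Mazur estimate. The main obstacle I expect is the joint construction of $V$: the vectors must simultaneously avoid the relevant neighborhoods of $S(Y_1)\cup S(Y_2)$ on $S^3$ (to preserve (i), (ii)) and cut every $Z$ in the compact family of tilted $2$-dimensional subspaces, and the cut-depth bound must be tracked carefully through the geometric comparison to yield a uniform $\ep(\gamma)>0$.
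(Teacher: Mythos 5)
Your construction is the same as the paper's: cutting with slabs $\{|\langle x,v\rangle|\le 1-\delta^2/2\}$ is exactly cutting caps of Euclidean radius $\delta$ (the paper defines cap radius so that a radius-$\delta$ cap is bounded by the hyperplane $\langle x,w\rangle=1-\delta^2/2$), and your verifications of (i), (ii), (iv) are correct and match the paper's. Your observation that an $O(Y_1)\times O(Y_2)$-invariant cut set cannot work is a nice clarification of why the paper's $\delta$-net $\Delta$ must be chosen non-invariantly.

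The gap is in (iii), and it is exactly the point you flag at the end as "the main obstacle." Your selection of slab normals works only under the restriction that $\gamma$ exceeds a threshold comparable to $\delta$: if $\theta_1\approx\theta_2$ are both small (so $\Omega(Z,Y_1)$ is of order $\gamma<\delta$ but still positive), then the range $[\cos\theta_2,\cos\theta_1]$ of $z\mapsto\|P_1z\|_2$ on $S(Z)$ is a short interval near $1$ and may lie entirely above $1-\delta^2/2$, so that \emph{no} admissible normal can be taken inside $Z$. Your compactness argument then only cuts the family $\{Z:\Omega(Z,Y_j)\ge\gamma_0\}$ for $\gamma_0\gtrsim\delta$, while the lemma requires the conclusion for \emph{all sufficiently small} $\gamma>0$ (and indeed the compactness/contradiction argument in the paper requires that \emph{every} $Z\ne Y_1,Y_2$, however close to $Y_1$ or $Y_2$, has a non--strictly-convex unit ball). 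Subspaces within distance $\delta$ of $Y_1$ or $Y_2$ are precisely the hard case, and the paper spends the bulk of the proof on them: it adds $32$ explicit caps of radius $\delta$ \emph{tangent} to $S(Y_1)$ and $S(Y_2)$, centered at the points $\pm\sigma e_i\pm\tau e_j$, and then carries out a coordinate analysis (the normalization $l\propto e_1+b e_3+c e_4$ with $b\ge c\ge 0$, the inequality $\frac{1+ab}{\sqrt{1+b^2+c^2}}>1$) to show that any such nearby subspace enters the interior of one of these caps. Nothing in your proposal replaces that step; you would need to add an analogous family of tangent cuts (with normals satisfying $\|P_1v\|_2=1-\delta^2/2$ exactly, aligned with an appropriate net of principal directions) to handle subspaces arbitrarily close to $Y_1$ and $Y_2$.

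A secondary difference worth noting: the paper proves (iii) by a soft compactness/contradiction argument --- once every $Z\ne Y_1,Y_2$ has a line segment in its unit sphere, the continuity of $d_{\rm BM}$ in the $\Omega$-metric and compactness of the Grassmannian deliver $\ve(\gamma)>0$ with no cut-depth bookkeeping at all. Your plan instead tracks a uniform cut depth $\eta(\gamma)$ and converts it to a Banach--Mazur bound by comparing a flat face to a circular arc; this is more quantitative but requires additional care (e.g.\ that the flat face of depth $\eta$ is not entirely cut away by deeper slabs elsewhere on $S(Z)$), which you have not addressed. The paper's soft argument sidesteps this entirely.
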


\begin{proof} The main idea of our proof of Lemma \ref{L:VerOrHor}
is to construct the unit ball of $X$ as the result of cutting from
the unit ball of the Euclidean space $Y_1\oplus_2 Y_2$ some
collection of symmetric pairs of caps.
By {\it cap} centered at a unit vector  $w$ in $\reo^4$ we mean the region of the unit ball in $\reo^4$ separated by a hyperplane orthogonal to the line spanned by $w$. The {\it radius} of the cap is the chordal (Euclidean) distance from $w$ to the $2$-dimensional sphere that is the intersection of the hyperplane and $S(\reo^4)$.
In our construction, these radii will be small enough to
satisfy  inequality \eqref{E:NormEquiv}.
In constructing the unit ball of $X$, sufficiently many caps will be removed so that
each two-dimensional subspace $G$ of $Y_1\oplus_2 Y_2$, except
$Y_1$ and $Y_2$, intersects the interior of at least one of the
caps  and, therefore, the norm of $X$ on $G$ will not be strictly convex;
consequently $G$ is not isometric to $\ell_2^2$.

It is clear that each space $X$ constructed as described above satisfies the conditions of
items \eqref{I:Isom}, \eqref{I:Proj1}, and \eqref{I:CloseEucl}.
\medskip

Now we prove that the condition in item \eqref{I:NotEucl} holds.
Let us assume the contrary. Then, for every $k\in \mathbb{N}$, there
exists $Z_k$ satisfying $\Omega(Z_k,Y_1)\ge\gamma$,
$\Omega(Z_k,Y_2)\ge \gamma$, and $d_{\rm BM}
(Z_k,\ell_2^2)<1+\frac1k$. Since the set of all subspaces of $X$
is compact with respect to the metric $\Omega$, the sequence
$\{Z_k\}_{k=1}^\infty$ has a $\Omega$-convergent subsequence. Let
$W$ be its limit.  The fact that for finite-dimensional spaces
$d_{\rm BM}$ is continuous with respect to $\Omega$ implies that $d_{\rm
BM}(W,\ell_2^2)=1$, and, thereupon, $W$ is isometric to $\ell_2^2$.

On the other hand, both $\Omega(W,Y_1)\ge\gamma$ and
$\Omega(W,Y_2)\ge\gamma$, whence $W$ is not the same as $Y_1$ or
$Y_2$, and hence its unit sphere contains line segments. This outcome contradicts  the conclusion of the previous paragraph.
\medskip

We now give details on how the removed caps are to be  selected.
Denote by $G_2(\mathbb{R}^4)$ the set of all two-dimensional
subspaces of $\mathbb{R}^4$. It is a compact space in the metric
$\Omega$. Let $\delta\in(0,\frac14)$. Using the standard approach,
we find in $G_2(\mathbb{R}^4)$  a finite subset $\Delta$ such that

\begin{enumerate}

\item $Y_1,Y_2 \in\Delta$,

\item $\forall W_1,W_2\in \Delta, ~~ W_1\ne W_2,
~~\Omega(W_1,W_2)\ge\delta$,

\item $\forall L\in G_2(\mathbb{R}^4), ~~ \exists W\in \Delta, ~~
\Omega(W,L)<\delta$.

\end{enumerate}

For each $W\in\Delta$ other than $Y_1$ or $Y_2$, we select a point $w\in S(W)$ which
is at distance at least $\delta$ to both $S(Y_1)$ and $S(Y_2)$.
We then cut from the unit ball of $\mathbb{R}^4$ two $0$-symmetric caps
of radius $\delta$ at $w$ and $-w$. It is clear that in such a
way we cut finitely many caps  and that ``under'' any of the caps
the resulting surface will be polyhedral.

Observe that the existence of $w$ is guaranteed for every
$W\in\Delta$ except $Y_1$ and $Y_2$. In fact, it is immediate that
there are $w_1$ and $w_2$ in $S(W)$ with both  $\dist(w_1,
S(Y_1))\ge\delta$ and $\dist(w_2,S(Y_2))\ge\delta$. If neither
$w_1$ nor $w_2$ works, meaning that both $\dist(w_1, S(Y_2))<\delta$
and $\dist(w_2,S(Y_1))<\delta$, then $\dist(w_1,S(Y_1))>\sqrt{2}-\frac14$ and
$\dist(w_2,S(Y_2))>\sqrt{2}-\frac14$. As a consequence, moving along the
sphere  $S(W)$ from $w_1$ to $w_2$ we arrive at the desired point.

We are ``almost'' done because,  for every $L\in G_2(\mathbb{R}^4)$,
there is $W\in \Delta$ such that $\Omega(W,L)<\delta$. If $W\ne
Y_1,Y_2$, we are done because the cap which we cut around the
point $w\in S(W)$ will cut some piece under $S(L)$. The only subspaces
$L$ which are not covered by this reasoning are those that  are in the set
\[\Psi:=\{L:~\min_{W\in \Delta, W\ne Y_1,Y_2}\Omega(L,W)\ge\delta\}.\]
This is a compact set. 
For this reason the function
\[\omega(L):=\min\{\Omega(L,Y_1),\Omega(L,Y_2)\}\]
attains its maximum on $\Psi$, and this maximum $\mu$ satisfies
$\mu<\delta$.

Consider an orthonormal basis $\lc{e_1, e_2, e_3, e_4}$ in $Y_1\oplus_2 Y_2=\reo^4$ such that 
$Y_1=\lin\lp{\lc{e_1,e_2}}$ and $Y_2=\lin\lp{\lc{e_3,e_4}}$. 
Choose $a>0$ in such a way that for the unit vector $f=\frac1{\sqrt{1+a^2}}e_1+ \frac{a}{\sqrt{1+a^2}}e_3$ we have 
$ \nm{e_1-f} = \delta$. Specifically, this condition means that $\frac1{\sqrt{1+a^2}}=1-\frac{\delta^2}{2}$. 

Let $\sigma= \frac1{\sqrt{1+a^2}}$ and $\tau = \frac{a}{\sqrt{1+a^2}}$.
We remove 16 caps  of radius $\delta$, tangent to $S(Y_1)$, centered at the points with position vectors 
$\lp{ \pm \sigma e_1\pm \tau e_3}$, $\lp{ \pm \sigma e_1\pm \tau e_4}$, and 
$\lp{ \pm \sigma e_2\pm \tau e_3}$, $\lp{ \pm \sigma e_2\pm \tau e_4}$. 
Similarly, we remove the 16 caps  of radius $\delta$, tangent to $S(Y_2)$, 
centered at the points with position vectors 
$\lp{ \pm \sigma e_3\pm \tau e_1}$, 
$\lp{ \pm \sigma e_3\pm \tau e_2}$, 
and  
$\lp{ \pm \sigma e_4\pm \tau e_1}$, 
$\lp{ \pm \sigma e_4\pm \tau e_2}$.

We now prove that for each $L\in\Psi$ there will be some part cut out of $S(L)$ by some of the caps described above.  

Let us choose $L\in\Psi$ and since one (and only one) of the conditions
$\Omega(L,Y_1)<\delta$ or $\Omega(L,Y_2)<\delta$ holds we can assume that $\Omega(L,Y_1)<\delta$. 
First, we argue that $S(L)$ intersects the hyperplane $\lin\lp{\lc{e_1,e_3,e_4}}$ at unique point of position vector $l$ so that 
$\nm{e_1-l} <\delta$. Note that $L$ cannot be a subspace of $\lin\lp{\lc{e_1,e_3,e_4}}$ since that would imply  $\Omega(L,Y_1) =\sqrt{2}$. 

Since $S(L)$ is symmetric about the origin, if $(x_1,x_2,x_3,x_4)\in S(L)$, then so is its opposite, 
and because the coordinate functions are continuous we necessarily have two diametrically opposite points with the coordinate 
$x_2=0$ (there are only two such points, for otherwise   $L \subset \lin\lp{\lc{e_1,e_3,e_4}}$). 
Let $\pm l$ be the position vectors of the two points $\pm (x_1,0,x_3,x_4)\in S(L)$. 
Since   $\Omega(L,Y_1) <\delta$ we have that $\dist(l, S(Y_1))< \delta$ and therefore 
$ \min_t \lc{ (x_1-\cos t)^2+ (0-\sin t)^2 + x_3^2+x_4^2} < \delta^2$, i.e. 
$\min_t \lc{2-2x_1\cos t} <\delta^2$.
 Note that $\dist(l, S(Y_1))$ is achieved when $x_1\cos t = \av{x_1}$ and without loss of generality we will assume that 
 $x_1>0$ and therefore $t=0$, i.e. the vector on $S(Y_1)$ closest to $l$ is $e_1$. 
Moreover, we may assume without loss of generality that 
$ l = \frac{1}{\sqrt{1+b^2+c^2}}\lp{e_1+be_3+ce_4}$ for coefficients $ b\ge c \ge 0$ where at least $b$ is positive.  
Indeed, if $b=c=0$, then $l = e_1$ and in this case we repeat the argument near the vector $e_2$ where we search for points
in $S(L) \cap \lin\lp{\lc{e_2,e_3,e_4}}$. Again, this intersection consists of a vector and its opposite. 
This time the vector near $e_2$ cannot coincide with $e_2$ for this would imply $L=Y_1$. 
If this happens, then we swap the labels of $e_1$ and $e_2$ and we are in the situation claimed above, 
with $l\neq e_1$ and $b> 0$. 

To show that a nonempty part will be cut out of $S(L)$, we show that $l$ is in the open cap of radius $\delta$ centered at 
 $f=\sigma e_1+ \tau e_3$.
 For this it suffices to show the inequality
 $\inn{f}{l} > \inn{f}{e_1}$ 
between inner products of unit vectors.  
It is equivalent to 
\begin{equation}
\label{E:ineq}
 \frac{1+ab}{\sqrt{1+b^2+c^2}}>1.
 \end{equation}
We remark that $\nm{e_1-f}=\delta > \nm{e_1-l}$ is equivalent to 
 $\inn{e_1}{l} > \inn{e_1}{f}$ which means  
 \[ \frac{1}{\sqrt{1+b^2+c^2}} > \frac{1}{\sqrt{1+a^2}}, \] 
 and therefore $a >  b$. 
 
 We thus have 
 \[ (1+ab)^2 > 1+2ab > 1+2b^2 \ge 1+b^2+c^2, \]
 which implies \eqref{E:ineq}.

Deleting these $32$ caps  together with caps centered at $w\in S(W)$ chosen above
from the unit ball of
$\mathbb{R}^4$, we get the unit ball of $X$ satisfying all of the
conditions of Lemma \ref{L:VerOrHor}.
\end{proof}

\begin{proof}[Proof of Theorem \ref{T:CountGenBendP}] 
We are going to prove that there exists $\ve>0$ such that the space $X$ constructed in
Lemma~\ref{L:VerOrHor} does not admit a $(1+\ve)$-bending of
$Y=\ell_2^2$ with parameters $(r,R)$ for any $0<r<R<\infty$.

To prove the statement by contradiction, select
 \begin{equation}
\label{E:pick_gamma} \sqrt[6]{2}-1 > \gamma >0,
\end{equation}
so that
\[1> \frac{(1+\gamma)^3}{\sqrt{2}}. \]
Let $\ve(\gamma)$ be the value given by item (\ref{I:NotEucl}) in
Lemma~\ref{L:VerOrHor}.
 We pick $\ve >0$ so that
\begin{equation}
\label{E:pick_ep}
\ve < \min\lc{\gamma,\ve(\gamma)}.
\end{equation}
Finally, we choose $\delta>0$ such that
\begin{equation}
\label{E:pick_del}
1-\frac{\delta^2}{2}> \frac{(1+\gamma)^3}{\sqrt{2}}.
\end{equation}

Next, assume that there exists a $(1+\ep)$-bending $T:Y\to X$ with
parameters $(r,R)$, $0<r<R<\infty$. Conforming to  the notation above,
we write $T = (T_1,T_2)$ meaning
\[ T_1:Y\to Y_1 \ \ \mbox{ and } \ \ T_2:Y\to Y_2.\]

In view of the Rademacher theorem, this map is differentiable
almost everywhere. By a  standard argument,   the
derivative $DT(y)$, whenever it exists,  is a $(1+\ep)$-bilipschitz linear embedding of
$Y$ into $X$ (see \cite[Chapter 7, Section 1]{BL00}).

\begin{remark}
 \label{R:1or2}
Our construction of $X$ yields that, for $\ve
<\ve(\gamma)$, item \eqref{I:NotEucl} in Lemma \ref{L:VerOrHor}
implies that at every point of differentiability  $y\in Y$, either
\[\Omega(DT(y)Y,Y_1)<\gamma, \ \ \mbox{ or } \ \ \Omega(DT(y)Y,Y_2)<\gamma.\]
\end{remark}
Indeed, if both $\Omega(DT(y)Y,Y_1)$ and $\Omega(DT(y)Y,Y_2)$ are $\ge \gamma$, then 
 Lemma \ref{L:VerOrHor} item \eqref{I:NotEucl} implies that $d_{BM}(DT(y)Y,\ell_2^2) \ge 1+\ve(\gamma)$, 
 which contradicts the fact that $T$ is a $(1+\ve)$-bending of $Y=\ell^2_2$ with $\ve <\ve(\gamma)$.

Let us paint $Y$ in three colors:
\begin{itemize}
\item
blue for the points where  $DT(y)Y$ is close to $Y_1$,
\item
yellow for the points where $DT(y)Y$ is close to $Y_2$,
\item
red for the points where $DT(y)$ does not exist.
\end{itemize}
Note that since $\gamma $ is such that a two-dimensional subspace $Z$ of $X$ cannot have simultaneously 
$\Omega(Z,Y_1) <\gamma$ and $\Omega(Z,Y_2) <\gamma$, it follows that points of differentiability of $T$ cannot be simultaneously blue and yellow. 

We continue by proving the following statement.
There exists a line segment in $Y$ such that:
\begin{enumerate}
\item Almost all of its points are either blue or yellow.
\item The set of points which are blue takes half of its
measure.
\end{enumerate}

To prove this statement
consider the $0$-centered disc
of radius $r$ in $Y$.
We fix Cartesian coordinates $(x,y)$ in $Y$ and denote by $u$ the unit vector in the positive $y$-direction.
Consider the set of  all vertical (parallel to $u$)  $x$-axis-symmetric
line segments $I_{x}$
of length $2R+r$,  whose intersection with the
disc are of length at least $r$ (see Figure~\ref{F:equi}).
\begin{figure}
\begin{tikzpicture}[scale=0.5]
\coordinate (O) at (0,0);

\draw[black,thin] (O) circle(10);
\node[black,thin] at (10.3,-0.3){\small $R$};
\draw[black, thin] (O) circle (4.5);
\node[black] at (4.8,-0.3) {\small $r$};

\filldraw (O) circle (0.05);

\coordinate (X) at (-11,0);
\coordinate (XX) at (11,0);

\draw[thin,->] (X)--(XX);
\node at (11,-0.3)  {\small $x$ };

\coordinate (Y) at (0, -14);
  \coordinate (YY) at (0, 14);
\draw[thin,->] (Y) -- (YY);
\node at (-0.3,14) {\small $y$};

\coordinate (U) at (-3,12);

\coordinate (UU) at (3,12);
\draw[black,thin] (UU) -- (U);
\coordinate (V) at (-3,-12); \coordinate (VV) at (3,-12);
\draw[black,thin] (V) -- (U);

\draw[black,thin](VV)--(UU);
\draw[black,thin] (V)--(VV);

\node[thin] at (1.5,12.5){$R+r/2$};

\node[thin] at (-0.4,-0.4) {$O$};
\filldraw[black,thin] (3,0) circle (0.05);
\node[black,thin] at (2.3,-0.47)
{\small$\frac{\sqrt{3}}{2}r$};
 \filldraw (1,0) circle (0.05);
\node[black] at (1.2,-0.3){\small $x$};
\filldraw[black,thin] (-3,0) circle (0.05);
\node[black,thin] at (-2.7,-0.47)
{\small $ -\frac{\sqrt{3}}{2}r$};

\draw[black,thin] (1,-12) -- (1,12);
\coordinate (T) at (0,3.5); \coordinate (TT) at (1, 3.5);
\filldraw[black,thin] (1,3.5) circle(0.1);
\node[black] at (-0.3,3.5) {\small $t$};
\coordinate (S) at (0,5.75);
\coordinate (SS) at (1,5.75);
\filldraw[black,thin] (1,5.75) circle(0.1);
\draw[black, ultra thick] (TT) --  (SS);
\draw[black, dashed] (S) --  (SS);
\draw[black, dashed] (T)-- (TT);
\node[black] at (-0.8,5.75)
{\small $t+\frac{r}{2}$};

\node[black] at (1.35,8) {$I_x$};
\filldraw[black,thin] (0,12) circle(0.05);

\node at (1,-15) {~};

\end{tikzpicture}
\caption{Looking for a suitable interval} \label{F:equi}
\end{figure}
The interval of the
corresponding values of $x$ is $\ls{-\frac{\sqrt{3}}{2}r,
\frac{\sqrt{3}}{2}r}$. 
Applying the Fubini theorem
(e.g. Theorem 14.1  in \cite{Di16}) to the characteristic function of the set of non-differentiability
points of $T$ in the $x$-axis-symmetric rectangle of height $2R+r$
have measure $0$ for almost all $x$. 

Also, the intersections of $I_x$ with the blue and yellow
sets are measurable for almost all $x$. Hence,  we can pick $x$ for
which the ``vertical'' line segment is blue or yellow almost
everywhere and blue-yellow pieces are measurable. Consider  a
moving subsegment of length $r/2$ along this
$I_x$ line segment. 
We claim that there is a position at which the measure of yellow
points on this segment is exactly $r/4$. This can be done as
follows. For $0\le t\le R$, consider a line segment $[t,t+\frac r2]$ 
and the integral $F(t):=\int_{t}^{t+\frac r2} c(s) ds$, 
where
$c(s)=-1$ if $(x,s)$ is blue and $c(s)=1$ if $(x,s)$ is
yellow. 
Then $F(t)$ is a continuous function which varies from
$-r/2$ to $r/2$
 as $t$ ranges from $0$ to $R$. 
This is because for $s\in [0, r/2]$  we have $\nm{(x,s)} \le r$ and therefore 
$DT(x,s)Y = Y_1$ and $c(s)=-1$, while for  $s\in [R,R+r/2]$  we have 
$\nm{(x,s)} \ge R$ and $DT(x,s)Y = Y_2$ and $c(s)=1$. 
Therefore $F$ attains value $0$ for some $0\le t_0 \le R$.

The argument will be completed in the following
way.   Since $T$ is a Lipschitz function,  the norm equivalence \eqref{E:NormEquiv} implies that
each one of its four components is also Lipschitz.
Since the Fundamental Theorem of Calculus holds for absolutely continuous functions (e.g. Proposition 7.2 in \cite{Di16}),
it holds for Lipschitz functions.
We use $[t_0, t_0+r/2]$ to parameterize the interval
above (with the measure of blue set equal to the measure of the  yellow set equal to $r/4$) as
\[t_0\le t \le t_0+r/2 \to p(t) = (x,t).\]
Let $a= p(t_0)$ be the bottom endpoint and
$b = p(t_0+r/2)$ be the top endpoint of the interval.
Denote by $I$ the set of those $t \in [t_0, t_0+r/2]$ for which $T$ is differentiable at $p(t)$.
$I$ is not necessarily an interval but it has $1$-dimensional Lebesgue measure $|I|=r/2$.
Applying the Fundamental Theorem of Calculus to $T$,  one obtains:
\begin{equation}
\label{E:FTCLip}
T(b)-T(a)=\int_I DT(p(t))u \ dt.
\end{equation}

We claim  that the $X$-norm  of this integral cannot be
$(1+\ep)$-equivalent to $\|b-a\| =r/2$. 
Splitting the integral as
\begin{equation}
\label{E:Int|b-a|}
\int_I DT(p(t))u \ dt=  \int_{I_1}
DT(p(t))u \ dt
  +\int_{I_2} DT(p(t))u\ dt,
\end{equation}
where on
the right-hand side we consider integrals over values $t \in I_1$ for which $p(t)$ is in the blue set
and values $t \in I_2$ for which $p(t)$ is in the yellow set. Note that $I_1$ and $I_2$ are
measurable subsets of
$I$  and that
$|I_1|=|I_2|=r/4$ by the previous step.
Now,  we estimate  the norm of
the integral in \eqref{E:Int|b-a|} from above.

With the notation
$T=(T_1,T_2)$, one has:
\[DT(p(t))u =
DT_1(p(t))u +DT_2(p(t))u
\in Y_1 \oplus Y_2.\]

For $t\in I_1$, the definition of $I_1$  implies that
\[DT(p(t))u \in DT(p(t))Y \ \ \mbox{ with } \ \
\Omega\lp{DT(p(t))Y,Y_1}<\gamma.\]

Further, we need the following

\begin{observation}\label{O:UCLargeDis} For any vector $y=(y_1, y_2) \in Z$ for
some $2$-dimensional subspace $Z$ of $X$ for which
$\Omega(Z,Y_1)\le \gamma$, it holds $\nm{y_2} \le \gamma
\nm{y}_X$. Similarly if $\Omega(Z,Y_2)\le \gamma$ then $\nm{y_1}
\le \gamma \nm{y}_X$.
\end{observation}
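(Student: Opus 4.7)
The plan is to reduce the statement to a direct application of the definition of spherical opening together with property (\ref{I:Proj1}) of Lemma \ref{L:VerOrHor}, which says that the direct sum projections of $X = Y_1\oplus Y_2$ have norm $1$. The argument is identical for both cases, so I treat the first one: $\Omega(Z,Y_1)\le \gamma$.

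First, by homogeneity we may assume $y\neq 0$ and normalize so that $\|y\|_X = 1$; the general case follows by scaling. Since $y\in S(Z)$ and $\Omega(Z,Y_1)\le \gamma$, the definition of spherical opening guarantees the existence of a unit vector $w\in S(Y_1)$ with $\|y-w\|_X \le \gamma$. Writing $y = y_1 + y_2$ with $y_i \in Y_i$ and $w = w_1 + 0$ with $w_1\in Y_1$, we obtain $y-w = (y_1-w_1) + y_2$.

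Now apply the direct sum projection $P_2 : X\to Y_2$. By property (\ref{I:Proj1}) of Lemma \ref{L:VerOrHor}, $\|P_2\|=1$, so
\[
\|y_2\| \;=\; \|P_2(y-w)\| \;\le\; \|y-w\|_X \;\le\; \gamma \;=\; \gamma\|y\|_X.
\]
Undoing the normalization gives $\|y_2\|\le \gamma\|y\|_X$ for arbitrary $y\in Z$. The second assertion follows by the same argument with the roles of $Y_1$ and $Y_2$ (and of $P_1$ and $P_2$) swapped. There is no real obstacle here; the observation is essentially a bookkeeping consequence of the two ingredients (the definition of $\Omega$ and the norm-$1$ projections), and its role in the broader proof is to convert the geometric closeness $\Omega(DT(p(t))Y, Y_j) < \gamma$ into an analytic bound on the components of $DT(p(t))u$.
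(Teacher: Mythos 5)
Your proof is correct, and it takes a mildly different (and arguably cleaner) final step than the paper. Both arguments start the same way: from $\Omega(Z,Y_1)\le\gamma$, produce a vector $w\in Y_1$ with $\nm{y-w}_X\le\gamma\nm{y}_X$ (you normalize and take $w\in S(Y_1)$; the paper phrases it as $d_X(y,Y_1)\le\gamma\nm{y}_X$ and picks a minimizer in $Y_1$). The divergence is in how $\nm{y_2}$ is bounded by $\nm{y-w}_X$. The paper writes $\nm{y_2}=\nm{y-y_1}\le\nm{y-w}_2\le\nm{y-w}_X$, using the Euclidean orthogonality of $Y_1$ and $Y_2$ (so $y_1$ is the $\ell_2$-best approximation of $y$ from $Y_1$) together with the norm equivalence \eqref{E:NormEquiv}. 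You instead apply the direct-sum projection $P_2$, which has norm $1$ by item \eqref{I:Proj1} of Lemma~\ref{L:VerOrHor}, and observe $P_2(y-w)=y_2$, giving $\nm{y_2}\le\nm{y-w}_X$ in one step. Your route avoids invoking \eqref{E:NormEquiv} and the Euclidean structure, relying only on the norm-$1$ projection; the paper's route is slightly more in the spirit of the surrounding Euclidean comparison estimates. Both are valid and essentially equivalent in content; no gap.
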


\begin{proof} Assume that $y=(y_1,y_2)
\in Z$, where $Z$ is  a $2$-dimensional subspace of $X$ such that $
\Omega(Z,Y_1)\le \gamma$.  This implies that $d_X(y,Y_1)\le \gamma
\nm{y}_X$. Let $w$ be a vector in $Y_1$ such that
\[ \nm{y-w}_X = d_X(y,Y_1).\]
Then,
\[ \nm{y_2} = \nm{y-y_1} \le \nm{y-w}_2
\stackrel{\eqref{E:NormEquiv}}{\le} \nm{y-w}_X \le \gamma
\nm{y}_X. \]
\end{proof}

Using this observation, we obtain that  for every $t\in I_1$, 
\[\nm{DT_2(p(t))u} \le \gamma
\nm{DT(p(t))u}_X \le \gamma(1+\ve).\]
Similarly, for every $t\in I_2$, we have:
\[DT(p(t))u \in DT(p(t))Y \ \ \mbox{ with } \ \
\Omega\lp{DT(p(t))Y,Y_2}<\gamma,\]
and hence
\[\nm{DT_1(p(t))u} \le \gamma
\nm{D_T(p(t))u}_X \le \gamma(1+\ve).\]
Re-write \eqref{E:FTCLip} and \eqref{E:Int|b-a|} as
\[ \begin{aligned}
T(b)-T(a) = &
\lp{\int_{I_1}
DT_1(p(t))u \ dt +\int_{I_2}
DT_1(p(t))u \ dt} \\
 & +  \lp{\int_{I_1}
DT_2(p(t))u \ dt +\int_{I_2}
DT_2(p(t))u \ dt}.
\end{aligned}\]
The first parenthesis contains a vector $v_1$ in $Y_1$ with norm bounded
by
\[ \begin{aligned}
 \nm{v_1} = & \nm{\int_{I_1}
DT_1(p(t))u \ dt +\int_{I_2}
DT_1(p(t))u \ dt} \\
& \le \int_{I_1}
\nm{DT_1(p(t))u} \ dt +\int_{I_2}
\nm{DT_1(p(t))u} \ dt \\
 & \le \int_{I_1} (1+\ve) \ dt +  \int_{I_2} \gamma(1+\ve) \ dt
= (1+\gamma)(1+\ve)\frac r4.
\end{aligned} \]
Similarly, the second parenthesis is a vector $v_2$ in $Y_2$ with
the same upper bound for the norm.

Therefore,
\[ \nm{T(b)-T(a)}_X =
\nm{v_1+v_2}_X \stackrel{\eqref{E:NormEquiv}}{\le}
\frac{1}{1-\delta^2/2} \nm{v_1+v_2}_2 \le
\frac{1}{1-\delta^2/2}\sqrt{2} (1+\gamma)(1+\ve)\frac r4,
\]
where the last inequality follows from the Pythagorean Theorem and the estimates on the norms of $v_1$ and $v_2$.

Since
\[ \frac{1}{1+\ve}\frac r2 =
\frac{1}{1+\ve}\nm{b-a} \le
\nm{T(b)-T(a)}_X,\]
we obtain
\[\frac{1}{1+\ve}\frac r2  \le
\frac{1}{1-\delta^2/2}\sqrt{2} (1+\gamma)(1+\ve)\frac r4.
\]
Thus,
\[1-\frac{\delta^2}{2} \le
\frac{(1+\gamma)(1+\ve)^2}{\sqrt{2}}.
\]

As $\ve$ was chosen strictly less than $\gamma$, we derive:
\[ 1-\frac{\delta^2}{2} < \frac{(1+\gamma)^3}{\sqrt{2}}.\]
However, this contradicts \eqref{E:pick_del} and, thus,  it contradicts the existence of the function $T$
with the required properties.
\end{proof}

\section*{Acknowledgement}The second-named  author gratefully acknowledges the support of Atilim university as this work was mostly conducted while she was on research leave supported by Atilim University. Also, she expresses her sincere gratitude to professor G. M. Feldman (B.Verkin Institute for Low Temperature Physics and Engineering
of the National Academy of Sciences of Ukraine) for his invitation
the Department of Function Theory for this research leave and his
help during her stay at the Department. The third-named author
gratefully acknowledges the support by the National Science
Foundation grant NSF DMS-1953773.

We thank the anonymous referee for many helpful comments.

\end{large}

\renewcommand{\refname}{\section*{References}}

\textsc{Department of Mathematics and Computer Science, St. John's
University, 8000 Utopia Parkway, Queens, NY 11439, USA} \par
  \textit{E-mail address}: \texttt{catrinaf@stjohns.edu} \par
  \medskip

\textsc{Department of Mathematics, Atilim University, 06830
Incek,\\ Ankara, TURKEY} \par \textit{E-mail address}:
\texttt{sofia.ostrovska@atilim.edu.tr}\par\medskip

\textsc{Department of Mathematics and Computer Science, St. John's
University, 8000 Utopia Parkway, Queens, NY 11439, USA} \par
  \textit{E-mail address}: \texttt{ostrovsm@stjohns.edu} \par


\begin{thebibliography}{WW}

\bibitem{AGM15} S.~Artstein-Avidan, A.~Giannopoulos, V.\,D.~Milman, {\it Asymptotic geometric analysis.} Part I. Mathematical Surveys and
Monographs, 202. American Mathematical Society, Providence, RI,
2015.

\bibitem{AGM21} S.~Artstein-Avidan, A.~Giannopoulos, V.\,D.~Milman, {\it Asymptotic geometric analysis.} Part II. Mathematical Surveys and
Monographs, 261. American Mathematical Society, Providence, RI,
2021.

\bibitem{BBM06} Y.~Bartal, B.~Bollob\'as, M.~Mendel, Ramsey-type
theorems for metric spaces with applications to online problems.
J. Comput. System Sci. 72 (2006), no. 5, 890--921.

\bibitem{BLMN05} Y.~Bartal,
N.~Linial, M.~Mendel, A.~Naor, On metric Ramsey-type phenomena,
{\it Annals of Math.}, {\bf 162} (2005), 643--709.

\bibitem{BL08} F.~Baudier, G.~Lancien, Embeddings of locally
finite metric spaces into Banach spaces, {\it Proc. Amer. Math.
Soc.}, {\bf 136} (2008), 1029--1033.

\bibitem{BLS18} F.~Baudier, G.~Lancien, Th.~Schlumprecht,
The coarse geometry of Tsirelson's space and applications. {\it J.
Amer. Math. Soc.} {\bf 31} (2018), no. 3, 699--717.

\bibitem{BL00} Y.~Benyamini, J.~Lindenstrauss, {\it Geometric nonlinear
functional analysis}. Vol. {\bf 1}. American Mathematical Society
Colloquium Publications, {\bf 48}. American Mathematical Society,
Providence, RI, 2000.

\bibitem{BFM86} J.~Bourgain, T.~Figiel, V.~Milman, On Hilbertian subsets of
finite metric spaces. Israel J. Math. 55 (1986), no. 2, 147--152.

\bibitem{BS07} S.~Buyalo, V.~Schroeder, {\it Elements of
asymptotic geometry.} EMS Monographs in Mathematics. European
Mathematical Society (EMS), Z\"urich, 2007.

\bibitem{DG03} M.~Dadarlat, E.~Guentner,
Constructions preserving Hilbert space uniform embeddability of
discrete groups. {\it Trans. Amer. Math. Soc.} {\bf 355} (2003),
no. 8, 3253--3275.

\bibitem{Di16} E. DiBenedetto,
{\it Real analysis.} 2nd edition.
Birkh\"{a}user Advanced Texts. Basler Lehrb\"{u}cher. New York, NY:
Birkh\"{a}user/Springer, 2016.

\bibitem{Dvo59} A.~Dvoretzky, A theorem on convex bodies and applications to Banach spaces, {\it
Proc. Nat. Acad. Sci. U.S.A.}, {\bf 45} (1959) 223--226; erratum,
1554.

\bibitem{Dvo61} A.~Dvoretzky, Some results on convex bodies and Banach spaces, in:
{\it Proc. Internat. Sympos. Linear Spaces} (Jerusalem, 1960),
pp.~123--160, Jerusalem Academic Press, Jerusalem; Pergamon,
Oxford, 1961.

\bibitem{Gor88} Y.~Gordon, Gaussian processes and almost spherical
sections of convex bodies. {\it Ann. Probab.} {\bf 16} (1988), no.
1, 180--188.

\bibitem{Gro53} A.~Grothendieck, Sur certaines classes de suites dans les espaces de Banach et le th\'eor\`eme de Dvoretzky-Rogers.
{\it Bol. Soc. Mat. S\~ao Paulo} {\bf 8} (1953), 81--110.

\bibitem{Gru07}
P.\,M.~Gruber, {\it Convex and discrete geometry}. Grundlehren der
mathematischen Wissenschaften [Fundamental Principles of
Mathematical Sciences], 336. Springer, Berlin, 2007.

\bibitem{JL01} W.\,B.~Johnson,  J.~Lindenstrauss, Basic
concepts in the geometry of Banach spaces, in: {\it Handbook of
the geometry of Banach spaces} (W.\,B.~Johnson and
J.~Lindenstrauss, Eds.) Vol. {\bf 1},  Elsevier, Amsterdam, 2001,
pp.~1--84.

\bibitem{KO18} J.~Kilbane, M.\,I.~Ostrovskii, There is no finitely
isometric Krivine's theorem. {\it Houston J. Math.} {\bf 44}
(2018), No. 1, 309--317.

\bibitem{LM75} D.\,G.~Larman, P.~Mani, Almost ellipsoidal sections and projections of convex bodies. {\it Math. Proc. Cambridge Philos. Soc.}
{\bf 77} (1975), 529--546.

\bibitem{LT77} J.~Lindenstrauss, L.~Tzafriri,
{\it Classical Banach spaces. {\bf I}. Sequence spaces}.
Ergebnisse der Mathematik und ihrer Grenzgebiete, Vol. {\bf 92}.
Springer-Verlag, Berlin-New York, 1977.

\bibitem{LT79} J.~Lindenstrauss, L.~Tzafriri, {\it Classical Banach
spaces. {\bf II}. Function spaces}. Ergebnisse der Mathematik und
ihrer Grenzgebiete [Results in Mathematics and Related Areas],
{\bf 97}. Springer-Verlag, Berlin-New York, 1979.

\bibitem{Mat02} J.~Matou\v sek, {\it Lectures on discrete geometry}. Graduate Texts in Mathematics, {\bf 212}. Springer-Verlag, New York, 2002.

\bibitem{MN07} M.~Mendel, A.~Naor, Ramsey partitions and proximity data structures. {\it J. Eur. Math.
Soc.} (JEMS) {\bf 9} (2007), no. 2, 253--275.

\bibitem{MN13} M.~Mendel, A.~Naor, Ultrametric subsets with large
Hausdorff dimension. {\it Invent. Math.} {\bf 192} (2013), no. 1,
1--54.

\bibitem{Mil71} V.\,D.~Milman,
A new proof of A. Dvoretzky's theorem on cross-sections of convex
bodies. (Russian) Funkcional. Anal. i Prilo\v zen. 5 (1971), no.
4, 28--37.

\bibitem{MS86} V.\,D.~Milman, G.~Schechtman, {\it Asymptotic theory of
finite-dimensional normed spaces.}
 With an appendix by M. Gromov. Lecture Notes in Mathematics, 1200. Springer-Verlag, Berlin, 1986.

\bibitem{Nao12} A.~Naor,
An introduction to the Ribe program, {\it Jpn. J. Math.}, {\bf 7}
(2012), no.~2, 167--233.

\bibitem{NT12}  A.~Naor, T.~Tao, Scale-oblivious metric fragmentation and
the nonlinear Dvoretzky theorem. Israel J. Math. 192 (2012), no.
1, 489--504.

\bibitem{Now06} P.\,W.~Nowak,
On coarse embeddability into $\ell\sb p$-spaces and a conjecture
of Dranishnikov, {\it Fund. Math.}, {\bf 189} (2006), no. 2,
111--116.

\bibitem{NY12} P.\,W.~Nowak, G.~Yu, {\it
Large scale geometry}.
EMS Textbooks in Mathematics. European Mathematical Society (EMS), Z\"urich, 2012.

\bibitem{OS94} E.~Odell, T.~Schlumprecht,
The distortion problem, {\it Acta Math.}, {\bf 173} (1994), no. 2,
259--281.

\bibitem{OO19} S.~Ostrovska, M.\,I.~Ostrovskii, Distortion in the finite determination result for embeddings
of locally finite metric spaces into Banach spaces, {\it Glasgow
Math. J.}, {\bf 61} (2019) 33--47.

\bibitem{OO19b} S.~Ostrovska, M.\,I.~Ostrovskii, On embeddings
of locally finite metric spaces into $\ell_p$. {\it J. Math. Anal.
Appl.} {\bf 474} (2019), no. 1, 666--673.

\bibitem{Ost94} M.\,I.~Ostrovskii, Topologies on the set of all subspaces of a Banach space and
related questions of Banach space geometry, {\it Quaestiones
Math.}, {\bf 17} (1994), no. 3, 259--319.

\bibitem{Ost06} M.\,I.~Ostrovskii, On comparison of the coarse embeddability into a Hilbert space and
into other Banach spaces, {\it unpublished manuscript}, 2006,
available at {\tt http://facpub.stjohns.edu/ostrovsm}

\bibitem{Ost09} M.\,I.~Ostrovskii, Coarse embeddability into Banach spaces, {\it Topology Proc.},
{\bf 33} (2009), 163--183.

\bibitem{Ost13} M.\,I.~Ostrovskii, {\it Metric Embeddings: Bilipschitz and Coarse Embeddings into Banach
Spaces}, de Gruyter Studies in Mathematics, {\bf 49}. Walter de
Gruyter \&\ Co., Berlin, 2013.

\bibitem{Ost15} M.\,I.~Ostrovskii, Isometric embeddings of finite subsets of  $\ell_2$  into
infinite-dimensional Banach spaces, {\tt
https://mathoverflow.net/questions/221181/}

\bibitem{PV18} G.~Paouris, P.~Valettas, Dichotomies, structure, and concentration in normed spaces. {\it Adv. Math.} {\bf 332} (2018), 438--464.

\bibitem{Sch06} G.~Schechtman, Two observations regarding embedding subsets
of Euclidean spaces in normed spaces. Adv. Math. 200 (2006), no.
1, 125--135.

\bibitem{Tsi74} B.\,S.~Tsirelson, It is impossible to imbed $\ell_{p}$ of $c_{0}$ into an arbitrary Banach
space, {\it Functional Anal. Appl.}, {\bf 8} (1974), 138--141.

\end{thebibliography}
\end{document}